\documentclass[11pt,reqno]{amsart}

\usepackage[margin=1.1in]{geometry}
\usepackage{amsmath,amssymb,amsthm,mathtools}
\usepackage{ytableau}
\usepackage{tikz}
\usepackage[colorlinks=true,linkcolor=blue,citecolor=blue]{hyperref}

\theoremstyle{plain}
\newtheorem{theorem}{Theorem}[section]
\newtheorem{proposition}[theorem]{Proposition}
\newtheorem{lemma}[theorem]{Lemma}
\newtheorem{corollary}[theorem]{Corollary}
\newtheorem{conjecture}[theorem]{Conjecture}

\theoremstyle{definition}
\newtheorem{definition}[theorem]{Definition}
\newtheorem{example}[theorem]{Example}
\newtheorem{remark}[theorem]{Remark}

\newcommand{\BK}{\mathrm{BK}}
\newcommand{\BKR}{\mathrm{BKR}}
\newcommand{\RC}{\mathcal{RC}}
\newcommand{\Cn}{\mathcal{C}}
\newcommand{\Gam}{\Gamma}
\newcommand{\SSYT}{\mathrm{SSYT}}
\newcommand{\ZZ}{\mathbb{Z}}

\newcommand{\Sym}{\mathfrak{S}}
\newcommand{\ev}{\mathsf{e}}
\newcommand{\rev}{\mathrm{rev}}
\newcommand{\id}{\mathrm{id}}

\DeclareMathOperator{\wt}{wt}

\begin{document}

\title{The big cactus group, invariant tensors, and rectangular tableaux}
\author{Gabriel Frieden}
\author{Pavlo Pylyavskyy}
\author{Leonid Rybnikov}
\date{\today}

\begin{abstract}
The cactus group acts on tensor products of crystals and, through Bender--Knuth involutions, on semistandard Young tableaux. We show that the big cactus group $J'_n=\pi_1^{\mathfrak S_n}(\overline M_{0,n}(\mathbb R))$, $n\ge3$, is obtained from the cactus group by imposing $n-1$ Dynkin-reversal relations, which say that conjugation by the full reversal acts on the Bender--Knuth generators by the Dynkin diagram automorphism; the quotient map is induced by forgetting the distinguished marked point. We give a second presentation, in which these relations are replaced by the requirement that every reduced word for the longest element of $\mathfrak S_n$ adapted to an orientation of $A_{n-1}$ represents the full reversal. These relations also hold on rectangular tableaux, where every adapted reduced word therefore acts as evacuation, and on invariant tensors: for finite normal crystals of any complex semisimple Lie algebra, the cactus action on invariants factors through $J'_n$. We prove the latter combinatorially in type $A$ and in general via Gaudin eigenline coverings. Haiman's promotion relation follows from a single adapted-word relation, but for $n\ge4$ it is strictly weaker than the whole family.
\end{abstract}

\maketitle

\setcounter{tocdepth}{1}
\tableofcontents

\section{Introduction}

\subsection{Bender--Knuth involutions and their group}

Bender--Knuth involutions $t_1,\dots,t_{n-1}$ act on semistandard Young
tableaux with entries in $\{1,\dots,n\}$. The involution $t_i$ rearranges
the entries $i$ and $i+1$ so as to interchange the number of $i$'s and
$i+1$'s, while preserving the shape and all other entries. They were
introduced to give a combinatorial proof that Schur functions are
symmetric.

The individual involutions $t_i$ are simple, but the group they generate
is not. Berenstein and Kirillov \cite{BK} initiated the study of the group
$\BK_n$ generated by $t_1,\dots,t_{n-1}$ acting on semistandard tableaux
of \emph{all} shapes, now called the Berenstein--Kirillov group. Two
distinguished elements are
\[
  \text{promotion}\quad p_{n-1}=t_{n-1}t_{n-2}\cdots t_1,
  \qquad
  \text{evacuation}\quad
  q_{n-1}=t_1(t_2t_1)(t_3t_2t_1)\cdots(t_{n-1}\cdots t_1).
\]
Finding a presentation of $\BK_n$ is still open, and appears to be hard.

Chmutov, Glick and the second author \cite{CGP} showed that the
Bender--Knuth involutions satisfy the relations of the cactus group
$\Cn_n$ of Henriques and Kamnitzer \cite{HK}, giving a surjection
$\Cn_n\twoheadrightarrow\BK_n$. The same conclusion was obtained
independently by Halacheva \cite{Halacheva} using skew Howe duality, and
concurrently by Berenstein and Kirillov \cite{BK16} by a group-theoretic
argument. Thus the cactus group gives a large family of relations in
$\BK_n$ that come from a general structure rather than from computation.

\subsection{Evacuation words on a rectangle}

This paper concerns what happens when one restricts the Bender--Knuth
action to \emph{rectangular} shapes, which in type $A$ model
Kirillov--Reshetikhin crystals. Let $\BKR_n$ be the quotient of $\BK_n$
obtained by imposing all relations that hold on semistandard tableaux of
every rectangular shape with entries in $\{1,\dots,n\}$.

Call a reduced word $\mathbf{i}=(i_1,\dots,i_N)$ for
$w_0\in\Sym_n$ an \emph{evacuation word} if the corresponding product
$t_{i_1}\cdots t_{i_N}$ acts as evacuation. The standard product
\[
q_{n-1}=t_1(t_2t_1)(t_3t_2t_1)\cdots(t_{n-1}\cdots t_1)
\]
is an evacuation word, as is every commutation-equivalent reduced word. On general shapes we know of no others, and conjecture that there
are none (Conjecture~\ref{conj:evac-BK}, verified for $n\le5$). On
rectangles, many more words work.

Recall that a reduced word for $w_0$ is \emph{adapted} to an orientation
$\Omega$ of the Dynkin diagram $A_{n-1}$ if $i_1$ is a source of
$\Omega$, $i_2$ is a source of $s_{i_1}\Omega$, and so on. Each reduced
word is adapted to at most one orientation, and the words adapted to a
fixed $\Omega$ form a single commutation class
\cite{Lusztig,Bedard}.

\begin{theorem}[Theorem~\ref{thm:knit-are-evac}]\label{thm:A-intro}
On rectangular tableaux, every reduced word for $w_0$ adapted to an
orientation of $A_{n-1}$ is an evacuation word.
\end{theorem}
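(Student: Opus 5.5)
Since adapted words for a fixed orientation $\Omega$ form a single commutation class, and the $t_i$ with $|i-j|\ge 2$ commute (a cactus relation, hence valid in $\BK_n$), it suffices to exhibit, for each of the $2^{n-2}$ orientations $\Omega$ of $A_{n-1}$, one adapted word and show that it acts as $q_{n-1}$ on rectangles. I will move between orientations by source-to-sink flips, which act on adapted words by cyclic rotation. If $\mathbf{i}=(i_1,\dots,i_N)$ is adapted to $\Omega$ with $i_1$ a source, then $(i_2,\dots,i_N,i_1^{*})$, where $i^{*}=n-i$, is a reduced word for $w_0$ adapted to $s_{i_1}\Omega$. This follows from $w_0 s_i w_0=s_{i^*}$ and the standard fact that the final letter of an adapted word is a sink relabelled by $*$. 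Every orientation is reached from the linear orientation, the one adapted to the standard word for $q_{n-1}$, by a sequence of such flips.

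The step that uses rectangles is the Dynkin-reversal relation
\[
  q_{n-1}\, t_i\, q_{n-1} = t_{n-i}\qquad (1\le i\le n-1)
\]
on tableaux of rectangular shape. I will prove it through the crystal interpretation. By \cite{CGP} (see also \cite{Halacheva}), $t_i$ corresponds to the cactus generator $s_{i,i+1}$ acting on the tensor factors of a Gelfand--Tsetlin-type pattern, and $q_{n-1}$ corresponds to $s_{1,n}$. In the cactus group we already have $s_{1,n}s_{i,i+1}s_{1,n}=s_{n-i,n-i+1}$, which is a cactus relation. The point needing care is the identification $q_{n-1}t_iq_{n-1}=t_{n-i}$ in $\BK_n$: $t_i$ is not literally $s_{i,i+1}$, since $t_i=q_iq_{i-1}q_iq_{i-2}\cdots$ in terms of the partial evacuations $q_j=s_{1,j+1}$. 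I will handle this as follows. On a rectangle $(m^k)$, evacuation coincides with Schützenberger's rotation by $180^\circ$ together with complementation $x\mapsto n+1-x$. This is the classical fact that evacuation of a rectangle equals the antipodal complement, and it is equivalently the Lusztig involution on a minuscule or KR crystal. Under this description, conjugating $t_i$ by the rotation-complement map plainly gives $t_{n-i}$, because rotation-complement exchanges the letters $i,i+1$ with $n+1-i,n-i$ and reverses the rows, and the Bender--Knuth rule is symmetric under this operation. This step, checking that $t_i$ commutes with rotation-complement up to $i\mapsto n-i$, is where I expect the main obstacle. The Bender--Knuth "free letters" rule is defined row by row, so one must verify that reversing row order and complementing carries free $i$/$(i+1)$ entries in each row to free entries for $n-i$ and $n+1-i$. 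This holds because freeness is determined column-wise and is preserved under the antipodal map.

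Given this relation, the induction goes as follows. Suppose $\mathbf{i}$ is adapted to $\Omega$ and $t_{i_1}\cdots t_{i_N}=q_{n-1}$ on rectangles. Then
\[
  t_{i_2}\cdots t_{i_N}\,t_{i_1^*}
  = t_{i_1}\,q_{n-1}\,t_{n-i_1}
  = t_{i_1}\,t_{i_1}\,q_{n-1}
  = q_{n-1},
\]
using the Dynkin-reversal relation in the form $q_{n-1}t_{n-i}=t_iq_{n-1}$ together with $t_i^2=1$. So the word adapted to $s_{i_1}\Omega$ is also an evacuation word on rectangles. Starting from the standard word for $q_{n-1}$ and flipping sources, every orientation is reached, and Theorem~\ref{thm:A-intro} follows.
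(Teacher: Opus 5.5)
Your proof is correct and follows the paper's route. The paper proves the Dynkin-reversal relation on rectangles the same way, via evacuation $=$ rotation-complement (Lemmas~\ref{lem:theta-is-evac} and~\ref{lem:theta-conj}), and then propagates $Q_D=q$ from the linear orientation using the identity $t_{i_2}\cdots t_{i_N}t_{n-i_1}=t_{i_1}\,(t_{i_1}\cdots t_{i_N})\,t_{n-i_1}$ (the Flip Lemma~\ref{lem:flip}) together with the commutation-class fact. You should drop the crystal detour: $t_i$ does not correspond to the adjacent commutor $q_{[i,i+1]}$, since their agreement would amount to the hexagon axiom (cf.\ Remark~\ref{rem:inv-three-swaps}), and the correct formula is $t_i=q_{i-1}q_iq_{i-1}q_{i-2}$; neither point affects the rotation-complement argument your proof actually relies on.
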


Thus the single commutation class coming from the linear orientation is replaced on rectangles by
all $2^{n-2}$ adapted classes. We conjecture that these are all the reduced words representing
evacuation on rectangles.

\begin{definition}
A reduced word $w$ for $w_0\in\Sym_n$ is an \emph{evacuation word} in $\BK_n$
(resp.\ $\BKR_n$) if $t_w=q_{n-1}$ in $\BK_n$ (resp.\ $\BKR_n$).
\end{definition}

\begin{conjecture}\label{conj:evac-BK}
The evacuation words in $\BK_n$ are exactly the reduced words for $w_0$ adapted to the linear
orientation. 
\end{conjecture}

Here the linear orientation is
\[
1\longrightarrow2\longrightarrow\cdots\longrightarrow n-1.
\]
For the opposite linear orientation, the adapted words represent
$\alpha(q_{n-1})$, the dual evacuation, on arbitrary tableaux. On rectangles,
dual evacuation agrees with evacuation; equivalently, $\alpha(q_{n-1})=q_{n-1}$.
Thus Theorem~\ref{thm:A-intro} extends this identity from the opposite linear
orientation to every orientation of $A_{n-1}$.

\begin{conjecture}\label{conj:evac-BKR}
The evacuation words in $\BKR_n$ are exactly the reduced words for $w_0$ adapted to some
orientation of $A_{n-1}$.
\end{conjecture}

We verified both conjectures by computer for $n\le5$.

\subsection{Equivalent relations}

The exponentially large family of evacuation identities from
Theorem~\ref{thm:A-intro} is equivalent to a very small set of
relations. Let $\alpha$ be the index-reversing map
$\alpha(t_i)=t_{n-i}$. The \emph{Dynkin-reversal relations}
are
\begin{equation}\tag{$\ast$}\label{eq:star-intro}
  q_{n-1}\,t_i\,q_{n-1}=t_{n-i},
  \qquad i=1,\dots,n-1.
\end{equation}
In other words, conjugation by evacuation acts on the Bender--Knuth
generators by the Dynkin diagram automorphism.

\begin{theorem}[Theorems~\ref{thm:first-kind} and~\ref{thm:equivalence}]
\label{thm:B-intro}
The relations \eqref{eq:star-intro} hold in $\BKR_n$. Moreover, modulo
the cactus relations, they are equivalent to the whole family of
Theorem~\ref{thm:A-intro}.
\end{theorem}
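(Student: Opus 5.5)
Theorem~\ref{thm:B-intro} has two parts, and I will handle them separately: first show that \eqref{eq:star-intro} holds on rectangles, then prove the purely group-theoretic equivalence inside the cactus group $\Cn_n$.

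\emph{Validity on rectangles.} I will work with the crystal interpretation. Semistandard tableaux of rectangular shape $a\times b$ with entries in $\{1,\dots,n\}$ form the $\mathfrak{gl}_n$ crystal $B(b\omega_a)$, up to a twist by a power of the determinant. On any normal crystal, evacuation $q_{n-1}$ acts as the Sch\"utzenberger involution $\xi$. Recall the following facts about $\xi$:
\[
\xi e_i=f_{n-i}\xi,\qquad \xi f_i = e_{n-i}\xi,\qquad \wt(\xi b)=w_0\wt(b).
\]
The Bender--Knuth involution $t_i$ is not a crystal operator. Its action on a highest-weight-type element is governed by the restriction to the Levi subalgebra indexed by $\{i\}$ together with the $\mathfrak{gl}_1$ weights of the remaining letters. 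My plan is to realize $t_i$ as the composite $\sigma_i$ of partial Sch\"utzenberger involutions on the $\mathfrak{gl}_{\{1,\dots,i+1\}}$ and $\mathfrak{gl}_{\{1,\dots,i\}}$ restrictions, using the standard identity $t_i = q_{i-1}q_i\,\cdots$ from \cite{CGP}. Conjugating by $\xi$ sends the $\mathfrak{gl}_{\{1..k\}}$ Levi to the $\mathfrak{gl}_{\{n-k+1..n\}}$ Levi. This gives
\[
\xi\, t_i\, \xi=\alpha(t_i)'\!,
\]
where $\alpha(t_i)'$ is the Bender--Knuth involution defined using the reversed alphabet. Thus $q\,t_i\,q = \alpha(q)\,t_{n-i}\,\alpha(q)$ in general. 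On rectangles we have $\alpha(q_{n-1})=q_{n-1}$, the dual-evacuation identity recalled above, which reduces the claim to a statement about $\alpha$-conjugates. Here I have to be careful: I must check directly that on a rectangle the reversed-alphabet $t_{n-i}$ coincides with $t_{n-i}$. I would prove this by the rotation-by-$180^\circ$ complement map $T\mapsto T^\vee$, $x\mapsto n+1-x$, which preserves rectangular shape and satisfies $t_i(T)^\vee=t_{n-i}(T^\vee)$. On rectangles $T^\vee$ equals $\xi(T)$ composed with a shape-preserving identification, and this should give \eqref{eq:star-intro} directly. The main obstacle is the claim that on rectangles $\xi$ equals the complement-rotation. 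This is classical: evacuation of a rectangle is the $180^\circ$ rotation with complemented entries. I would cite it or prove it via jeu de taquin.

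\emph{Equivalence modulo cactus relations.} Write $s_{[a,b]}\in\Cn_n$ for the interval generators, with $q_{n-1}=s_{[1,n]}$ and $t_i$ the images of the appropriate composites. Adapted words for an orientation $\Omega$ are built recursively: if $1$ is a sink, say, the adapted word for $w_0$ factors as a word for the longest element on $\{2,\dots,n\}$ adapted to $\Omega|$, times $t_1t_2\cdots t_{n-1}$ or a similar "slice". I will induct on $n$. For the direction $(\ast)\Rightarrow$ (adapted words), take $\Omega$, flip a source into a sink, and use $(\ast)$ for $\Sym_{n}$ together with the cactus relation $s_{[1,n]}s_{[a,b]}s_{[1,n]}=s_{[n+1-b,n+1-a]}$. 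Source-to-sink flips change an adapted word by moving its first letter $i$ to the end as $n-i$. The relation $q\,t_i\,q=t_{n-i}$ shows exactly that if $t_{i}u=q$, then $u\,t_{n-i}=q$. Every orientation is reachable by such flips from the linear orientation, for which $q$ is the standard word. Conversely, the two orientations linear and ``linear with $1$ flipped'' already yield $t_1u = u t_{n-1}=q$, hence $q t_1 q = t_{n-1}$. Flipping other sources yields the remaining relations, using that in any orientation reachable by flips each $i$ eventually appears as a first letter. This direction is mostly bookkeeping, and the hard part is the rectangle identity above.
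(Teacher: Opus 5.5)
Once the crystal paragraph is discarded, your route is the paper's. On a rectangle $q$ is rotation-plus-complementation $\theta$, and $\theta t_i\theta=t_{n-i}$ (Lemmas~\ref{lem:theta-is-evac} and~\ref{lem:theta-conj}). The equivalence rests on the Flip Lemma~\ref{lem:flip}, $Q_{D'}=t_jQ_Dt_{n-j}$, which is your ``move the first letter $j$ to the end as $n-j$''. As written, however, the proposal contains a false step, and the fact carrying the second half is only asserted.

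\textbf{The false identity in the first half.} You claim that $q\,t_i\,q=\alpha(q)\,t_{n-i}\,\alpha(q)$ holds ``in general''. It fails already on the $1\times1$ rectangle. There $t_i$ swaps the entries $i,i+1$, and $q=\alpha(q)$ acts by $a\mapsto n+1-a$. So $q\,t_i\,q=t_{n-i}$, while $\alpha(q)\,t_{n-i}\,\alpha(q)=t_i$. For $n=3$, $i=1$, the identity is equivalent in $\BK_3$ to $(t_1t_2)^5=1$; together with Lemma~\ref{lem:t1t2} this would force $t_1=t_2$. Your next step, using $\alpha(q)=q$ on rectangles, would then give $t_i=t_{n-i}$.

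Conjugating the Levi pieces by $\xi$ actually yields only the cactus identity $q\,t_i\,q=t^{\mathrm R}_{n-i}$, where $t^{\mathrm R}_j=q_{[j+1,n]}q_{[j,n]}q_{[j+1,n]}q_{[j+2,n]}$ is built from partial Sch\"utzenberger involutions on the top letters. The whole content of the first claim is that $t^{\mathrm R}_{n-i}=t_{n-i}$ on rectangles, and that paragraph never addresses it.

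Delete it and keep the $\theta$ argument. Also prove the identity you only assert, rather than treating $\theta=q$ as the sole obstacle; the latter is the citable classical fact. The check goes as follows:
\begin{itemize}
\item Rotation exchanges ``above'' and ``below''.
\item Complementation maps $\{i,i+1\}$ to $\{n-i,n-i+1\}$ with the values swapped.
\item Hence non-free pairs for $t_i$ go to non-free pairs for $t_{n-i}$.
\item A free row block $i^a(i+1)^b$ goes to $(n-i)^b(n-i+1)^a$, which $t_{n-i}$ turns into the image of $i^b(i+1)^a$.
\end{itemize}

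\textbf{The unproved step in the second half.} All the work lies in showing that $(u,n-j)$ is adapted to $s_j\Omega$ whenever $(j,u)$ is adapted to $\Omega$. Reducedness follows from $s_jw_0s_{n-j}=w_0$, and $u$ is adapted to $s_j\Omega$ by definition. What you must still show is that $n-j$ is a source once $u$ has been performed. Equivalently, a full adapted word for $w_0$ carries $\Omega$ to $\alpha(\Omega)$.

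The paper reads this off from knitting $D$ to $\bar D$, with $\bar d_k=d_{n-k}+n$, whose orientation is $\alpha(\Omega_D)$. Alternatively, prove it by induction along flips from the linear orientation, using that any source can be commuted to the front of an adapted word.

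You also need that the words adapted to a fixed orientation form a single commutation class. Otherwise, proving $t_w=q$ for one word per orientation does not give the whole family.

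The induction on $n$, the ``slice'' factorization and (C3) are never used and should be dropped; the equivalence holds in any quotient of $\Gam_n$. The factorization is moreover false as stated. For $1\leftarrow2\to3$ it produces $232123$, which lies in Warrington's set $X$ and is adapted to no orientation.
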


\subsection{Promotion}

We also obtain the identity
\[
  q_{n-1}\,\alpha(q_{n-1})
  =(t_1t_2\cdots t_{n-1})^n
  =p_{n-1}^{-n},
\]
which already holds in the commutation group
(Theorem~\ref{thm:promotion-identity}). For the opposite linear orientation
\[
1\leftarrow2\leftarrow\cdots\leftarrow n-1,
\]
the corresponding adapted relation on rectangles is
\[
\alpha(q_{n-1})=q_{n-1}.
\]
Together with $q_{n-1}^2=1$, this gives $p_{n-1}^n=1$ on rectangles,
which is Haiman's theorem \cite[Thm.~4.4]{Haiman}.	
We  show that for every $n\ge4$, the relation
$p_{n-1}^n=1$ is strictly weaker than the relations
\eqref{eq:star-intro}
(Proposition~\ref{prop:promotion-weaker}). Thus Haiman's relation captures only part of the relations we describe on rectangles.

\subsection{Adapted words and sorting networks}

A reduced word for $w_0$ can be viewed as a sorting network on $n$
wires. Choosing four wires and keeping only the crossings between them
gives a reduced word for $w_0\in\Sym_4$, which we call a
\emph{$4$-subnetwork}. Warrington \cite{Warrington} introduced this
point of view in connection with Sylvester's four-point problem. Under
the Goodman--Pollack correspondence with planar point configurations,
the sixteen reduced words for $w_0\in\Sym_4$ split into twelve
corresponding to four points in convex position and four,
\[
  X=\{123212,\ 321232,\ 212321,\ 232123\},
\]
corresponding to configurations in which one point lies inside the
triangle formed by the other three.

Meng \cite{Meng} studied reduced words avoiding these four subnetworks
and asked how many $X$-avoiding reduced words for $w_0\in\Sym_n$ there
are. The counts reported by Warrington begin
\[
2,12,328,54520,68641152.
\]
These are exactly the numbers of adapted reduced words that we obtain.
In fact, we prove the following.

\begin{theorem}[Theorem~\ref{thm:X-avoiding}]
Every adapted reduced word for $w_0$ is $X$-avoiding.
\end{theorem}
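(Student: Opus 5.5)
The argument reduces the statement to $n=4$ by restricting adapted words to four-wire subnetworks, and then settles $n=4$ by direct inspection.

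\textbf{Step 1: four-wire restrictions of adapted words are adapted.} Fix an orientation $\Omega$ of $A_{n-1}$ and a word $\mathbf i$ adapted to it. I would use the wiring-diagram description of adaptedness. At each step, position $i$ is a source precisely when, in the current arrangement of wires, the two wires at positions $i,i+1$ are not yet crossed and, for each neighbour $i\pm1$, the orientation makes $i$ fire before $i\pm1$.

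A cleaner formulation uses the Auslander--Reiten quiver. The letters of an adapted word correspond to positive roots $\epsilon_a-\epsilon_b$, that is, to crossings of wires $a$ and $b$. Adaptedness means the crossing order is a linear extension of the AR-quiver order of $\Gamma_\Omega$. In type $A$, this order is governed by the positions of the wires: each wire $a$ is either an ``up'' wire or a ``down'' wire, according to the sign pattern of $\Omega$ read along the diagram (the standard Gabriel/Bédard description via the sequence of arrow directions). Its crossings occur in an order determined solely by this label and by the relative positions of the other wires.

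Concretely, a word is adapted to some orientation if and only if there is a partition of the wires $\{2,\dots,n-1\}$ into $U\sqcup D$ such that the following hold:
\begin{itemize}
\item each wire in $U$ travels monotonically upward, crossing wires in a prescribed order;
\item each wire in $D$ travels monotonically downward in the same way;
\item wires $1$ and $n$ are both up and down.
\end{itemize}
In other words, the local orders of crossings along every wire are determined by the labels. I would prove this characterization by induction on source-sink reflections $\Omega\mapsto s_i\Omega$. The characterization is stable under restriction to any subset of wires, so every four-wire restriction is again adapted to the induced orientation of $A_3$. This restriction step is the main obstacle: one must check carefully that the crossing order along each wire depends only on the labels, so that deleting wires preserves it. I would first verify the claim on the linear orientation, where the word is $q_{n-1}$ and each restriction is $1\,21\,321$ up to commutation, and then propagate it through reflections.

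\textbf{Step 2: the case $n=4$.} There are four orientations of $A_3$. The commutation classes of their adapted words are those of
\[
121321,\qquad 321323,\qquad 213231,\qquad 231213 .
\]
Expanding these classes gives the words adapted to some orientation, and none of them lies in $X$. For instance, $123212$ starts with $1$ and then $2$, which requires $1\to2$ in the orientation. After applying $s_1$, position $2$ must be a source, and following this through gives a contradiction with the subsequent letters. I would simply check all four words of $X$ in this way.

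\textbf{Step 3: conclusion.} Combining the two steps, every $4$-subnetwork of an adapted word is adapted for $A_3$ (Step 1), hence lies outside $X$ (Step 2). Therefore every adapted word is $X$-avoiding.
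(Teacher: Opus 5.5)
\textbf{There is a genuine gap.} Step~1 carries the whole theorem, and as written it is both incorrect and unproved.

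For $n=4$ the adapted words are exactly the twelve words outside $X$. So the claim that every four-wire restriction of an adapted word is adapted is just a restatement of the theorem. The reduction to $n=4$ helps only if you exhibit a property that adapted words provably have, that passes to subnetworks, and that fails on every word of $X$.

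The property you describe does not do this. The wire starting at position $a$ makes $a-1$ upward moves (its crossings with lower-labelled wires) and $n-a$ downward moves. So no wire $a\in\{2,\dots,n-1\}$ travels monotonically, whereas wires $1$ and $n$ are exactly the monotone ones. The ``prescribed order'' of crossings is never specified, and the characterization is supported only by a plan (``induction on source--sink reflections''), not an argument.

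Step~2 also has a slip. The words $213231$ and $231213$ lie in the same commutation class (adapted to $1\leftarrow2\to3$). The class of $132132$ (adapted to $1\to2\leftarrow3$) is missing, so your list covers only $8$ of the $12$ adapted words.

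\textbf{The missing idea is the right invariant.} Call a wire \emph{unimodal} if all its crossings with lower-labelled wires precede all its crossings with higher-labelled wires, or all follow them.
\begin{enumerate}
\item[(i)] Unimodality is clearly inherited by subnetworks.
\item[(ii)] Each word of $X$ violates it: in $123212$ wire $3$ moves up, down, up; in $212321$ wire $2$ moves down, up, down; the other two words are their images under $j\mapsto4-j$.
\item[(iii)] Adapted words have unimodal wires. In an adapted word the occurrences of $j$ and $j+1$ strictly alternate: after reflecting at $j$, the edge $\{j,j+1\}$ points into $j$ until $j+1$ is used, and symmetrically. Suppose a wire moves down at level $j$ and its next move is up. Then it sits at position $j+1$ between two consecutive occurrences of $j$ with no $j+1$ between them, which forces $j=n-1$. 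Similarly, an up-then-down turn can occur only at position $1$. Since a wire makes only $n-1$ moves, it cannot turn at both ends, so it turns at most once.
\end{enumerate}

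\textbf{Comparison with the paper.} With this lemma your strategy becomes a complete proof, genuinely different from the paper's, which never inspects subnetworks. The paper quotes Meng's formula that the number of $X$-subnetworks equals $\sum_r(i_r-1)(n-i_r-1)-2\binom n4$. It then evaluates the sum using the letter multiplicities $(n+d_{n-j}-d_j)/2$ of a knitting word; the asymmetric part cancels under $j\leftrightarrow n-j$. That route is a short computation once Meng's result is granted. Yours is self-contained and structural.

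Your route also seems to give more. A non-unimodal wire is already non-unimodal in a suitable $4$-subnetwork, which then lies in $X$. Conversely, running the turning argument backwards shows that unimodal wires force $j$ and $j+1$ to alternate, which is adaptedness (orient each edge away from the letter that occurs first). Together these would settle the converse, Conjecture~\ref{conj:X}.
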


\subsection{The rectangular cactus group}

Let $\RC_n$, the \emph{rectangular cactus group}, be the quotient of
$\Cn_n$ by the Dynkin-reversal relations. For $n\geq3$,
Theorem~\ref{thm:big-cactus} identifies this quotient with the big
cactus group $J'_n$ via the map that forgets the root marked point.
By construction,
\[
\Cn_n\twoheadrightarrow \RC_n\twoheadrightarrow \BKR_n.
\]
The second map need not be an isomorphism: already for $n=4$,
$
\RC_4\twoheadrightarrow\BKR_4
$
has nontrivial kernel.

\begin{theorem}[Theorem~\ref{thm:rc4-not-bkr4}]
The natural map
$
\RC_4\twoheadrightarrow\BKR_4
$
is not an isomorphism.
\end{theorem}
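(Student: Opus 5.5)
To prove that $\RC_4\twoheadrightarrow\BKR_4$ is not an isomorphism, I will exhibit one explicit element $g\in\Cn_4$ that is nontrivial in $\RC_4$ but acts trivially on every rectangular tableau with entries in $\{1,2,3,4\}$. The proof then has two parts. The first shows that $g$ acts trivially on all rectangles, which is a statement about $\BKR_4$. The second shows that $g\neq 1$ in $\RC_4$, which is a statement about a finitely presented group.

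\textbf{Choosing $g$.} I would look for $g$ among short words in $t_1,t_2,t_3$ and in the cactus generators. A natural first candidate is a power of a promotion-type element, or a commutator such as $[t_1,\,t_2t_3t_2]$. Rectangles are far more rigid than general shapes, so there should be additional coincidences among short words on them.

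\textbf{Showing $g=1$ in $\BKR_4$.} A rectangle with at most $4$ rows is determined, up to deleting full columns of height $4$, by a rectangle with at most $3$ rows. So there are infinitely many rectangles, and the triviality of $g$ cannot be checked case by case. I would instead use a crystal or RSK description. Rectangular tableaux of shape $a\times b$ correspond to highest-weight structures tied to Kirillov--Reshetikhin crystals, and $t_i$ can be expressed through crystal reflections on the $\mathfrak{sl}_2$-strings for $i$ inside the $\{i,i+1\}$ part. Using this, I would reduce the identity $g=1$ on an arbitrary rectangle to an identity of piecewise-linear maps on Gelfand--Tsetlin patterns. For a rectangle these patterns have fixed top row $(b^a,0^{4-a})$, so each pattern has only $3$ free parameters per level. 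The piecewise-linear identity can then be verified by a finite case analysis on chambers, or by tropicalizing and checking a rational identity.

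\textbf{Showing $g\neq 1$ in $\RC_4$, the main obstacle.} The cleanest route is to construct an action of $\RC_4$ in which $g$ acts nontrivially. By Theorem~\ref{thm:big-cactus}, $\RC_4\cong J'_4$, the $\Sym_4$-equivariant fundamental group of $\overline M_{0,4}(\mathbb R)$. This space is a circle, and $J'_4$ is an explicit small group, essentially an extension of $\Sym_4$ by $\mathbb Z$ or a similar group. I would compute the image of $g$ in it directly. Alternatively, I would map $\RC_4$ onto a finite group, for example via the cactus action on invariants in a tensor product of non-rectangular crystals, and check that $g$ survives. Either way, I would first try the explicit description of $J'_4$ through the map forgetting the root marked point, since a word computation there is finite and conclusive.
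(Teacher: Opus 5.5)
There is a genuine gap. The whole argument hinges on the witness $g$, and your proposal never produces one. Both concrete candidates you name fail:
\begin{itemize}
\item \emph{Powers of promotion.} These cannot work, because $p^4=1$ already holds in $\RC_4$. The adapted relation for the anti-linear path gives $\alpha(q)=q$, and then $p^{-4}=q\alpha(q)=1$ (Corollary~\ref{cor:haiman}). Meanwhile $p$ has order exactly $4$ in $\BKR_4$.
\item \emph{The commutator $[t_1,t_2t_3t_2]$.} This is not trivial in $\BKR_4$. On the $1\times1$ rectangle each $t_i$ acts on the single entry by the transposition $s_i$, so the commutator acts by the commutator of $(1\,2)$ and $(2\,4)$, which is a $3$-cycle.
\end{itemize}

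More importantly, your heuristic that the witness should come from extra rigidity of rectangles points the wrong way. The missing idea is that a relation holding on \emph{all} shapes, but not in the cactus group, already suffices. By Berenstein--Kirillov, $(t_1t_2)^6=1$ in $\BK_n$ (Lemma~\ref{lem:t1t2}), hence in $\BKR_4$. So $g=(t_1t_2)^6$ works, and no piecewise-linear or Gelfand--Tsetlin analysis of rectangles is needed. That analysis, as sketched, is only a program. Also, for $i\ge2$ the Bender--Knuth involution $t_i$ is not the crystal reflection on $i$-strings; compare Remark~\ref{rem:inv-three-swaps}.

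For nontriviality in $\RC_4$, the paper uses an elementary quotient: $t_1,t_3\mapsto x$ and $t_2\mapsto y$ in $D_\infty=\langle x,y\mid x^2=y^2=1\rangle$. Since $q_3=t_1t_2t_1t_3t_2t_1\mapsto 1$, the Dynkin-reversal relations reduce to $t_1$ and $t_3$ having the same image. Since $q_{[3,4]}=q_3q_1q_3\mapsto x$, the relation $(t_1q_{[3,4]})^2=1$ holds. So $t_1t_2\mapsto xy$ has infinite order.

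Your route through $J'_4$ can also be made to work once $g=(t_1t_2)^6$ is chosen; you only need the map $\RC_4\to J'_4$, not injectivity. By the description in the proof of Lemma~\ref{lem:big-projected-paths}, $t_1$ and $t_3$ map to constant-path lifts of $(1\,2)$ and $(3\,4)$. The generator $t_2$ maps to the arc of the circle $\overline M_{0,4}(\mathbb R)$ joining $12|34$ and $13|24$. One checks that $(t_1t_2)^3$ becomes a loop traversing the three boundary arcs once each, hence a generator of $\pi_1\cong\mathbb Z$.

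However, this relies on the caterpillar-path identification imported from the forthcoming work of Gurenkova--Halacheva--Rybnikov. The $D_\infty$ check uses only the finite presentation of Theorem~\ref{thm:CGP}. In any case, neither half of your plan can be carried out until $g$ is specified.
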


\begin{conjecture}[Conjecture~\ref{conj:relative}]
\[
\BKR_n
\cong
\BK_n\big/
\big\langle\!\big\langle
q_{n-1}t_iq_{n-1}t_{n-i}
\big\rangle\!\big\rangle .
\]
\end{conjecture}

In other words, we conjecture that the Dynkin-reversal relations generate
all additional relations that appear when one restricts from arbitrary
shapes to rectangles. This avoids the much harder problem of finding a
presentation of $\BK_n$ itself.

\subsection{Crystals and invariant tensors}

The cactus group of Henriques and Kamnitzer \cite{HK} acts on tensor products
$B_1\otimes\cdots\otimes B_n$ of crystals: the generator $q_{[i,j]}$ reverses the factors
$i,\dots,j$ using the crystal commutor, built from the Sch\"utzenberger involution. We show that
the group $\RC_n\cong J'_n$ has a natural meaning in this setting.

An \emph{invariant} of a crystal is an element on which every $e_i$ and every $f_i$ is undefined,
equivalently a one-element connected component. We write $\operatorname{Inv}(B)$ for the set of
invariants. Since the cactus action is by crystal isomorphisms, it carries invariants to invariants
of the tensor product with the factors permuted.

\begin{theorem}\label{thm:inv-adapted}
Let $B_1,\dots,B_n$ be finite normal crystals for $\mathfrak{sl}_m$. On
$\operatorname{Inv}(B_1\otimes\cdots\otimes B_n)$, every reduced word for $w_0$ adapted to an
orientation of $A_{n-1}$, read in the elements $t_i\in\Cn_n$ of
Theorem~\ref{thm:CGP}, acts by
\[
b_1\otimes\cdots\otimes b_n
\longmapsto
\xi(b_n)\otimes\cdots\otimes\xi(b_1),
\]
where $\xi$ is the Sch\"utzenberger involution. Equivalently, the cactus action on invariants
factors through $\RC_n$.
\end{theorem}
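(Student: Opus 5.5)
By Theorem~\ref{thm:B-intro} (Theorems~\ref{thm:first-kind} and~\ref{thm:equivalence}), modulo the cactus relations the adapted-word relations are equivalent to the Dynkin-reversal relations \eqref{eq:star-intro}. So it suffices to prove two things. First, on $\operatorname{Inv}(B_1\otimes\cdots\otimes B_n)$ the element $q_{n-1}=q_{[1,n]}$ acts as $b_1\otimes\cdots\otimes b_n\mapsto\xi(b_n)\otimes\cdots\otimes\xi(b_1)$. Second, $q_{[1,n]}\,t_i\,q_{[1,n]}=t_{n-i}$ holds on invariants. Here $t_i$ is the element of $\Cn_n$ from Theorem~\ref{thm:CGP}, namely $t_i=q_{[1,i]}q_{[1,i+1]}$ up to the paper's convention. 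Once both are established, every adapted word equals $q_{n-1}$ on invariants and the cactus action factors through $\RC_n$.

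The first statement is a general crystal fact. The Henriques--Kamnitzer generator $q_{[1,n]}$ acts on the full tensor product as $\xi_{B_1\otimes\cdots\otimes B_n}$ followed by $\xi_{B_n}\otimes\cdots\otimes\xi_{B_1}$ with the factors reversed, or equivalently as $\xi$ of the whole product composed with the flip. On an invariant $b$, which is a one-element highest-and-lowest weight component, the Schützenberger involution of the whole product fixes $b$. Hence $q_{[1,n]}b=\xi(b_n)\otimes\cdots\otimes\xi(b_1)$. This step is routine once the definition of the commutor is unwound.

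The real content is the second statement. Using the cactus relation $q_{[1,n]}q_{[k,l]}q_{[1,n]}=q_{[n+1-l,n+1-k]}$, conjugation by $q_{[1,n]}$ sends $q_{[1,i]}$ to $q_{[n-i+1,n]}$. So $q_{[1,n]}t_iq_{[1,n]}$ is a product of suffix reversals, and we must show it agrees with $t_{n-i}=q_{[1,n-i]}q_{[1,n-i+1]}$ on invariants. The key lemma I would prove is the following: on invariants, $q_{[k+1,n]}$ equals $q_{[1,n]}\,q_{[1,k]}\,\sigma$, where $\sigma$ is the cyclic rotation. This follows because an invariant $b\in\operatorname{Inv}(B_1\otimes\cdots\otimes B_n)$ corresponds to an isomorphism-type pairing $B(\lambda)\otimes B(\lambda^*)\ni(\text{hw}\otimes\text{lw})$, with $\lambda$ the highest weight of $b_1\otimes\cdots\otimes b_k$. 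Reversing a prefix or a suffix then acts by $\xi$ on the respective highest-weight element. Concretely, I would use the fact that for an invariant $x\otimes y$ with $x\in B(\lambda)$ highest weight, $y$ is the lowest weight element of its component $\cong B(\lambda^*)$. Both prefix and suffix reversals are then determined by the component data, so the two sides can be compared by the uniqueness of invariants in $B(\lambda)\otimes B(\lambda^*)$.

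This uniqueness/rotation lemma is the main obstacle. If a direct proof is awkward, I would fall back on type $A$ combinatorics. There, invariants of $\mathfrak{sl}_m$ tensor products of rectangles biject with rectangular (standard-type) tableaux, and the cactus action corresponds to Bender--Knuth involutions. The relations would then follow from Theorem~\ref{thm:first-kind} applied to rectangular tableaux. In general type, I would instead use the Gaudin eigenline covering interpretation: the monodromy action of $J'_n=\pi_1^{\Sym_n}(\overline M_{0,n}(\mathbb R))$ on invariant eigenlines extends the cactus action, since for invariants the root marked point may be forgotten. Theorem~\ref{thm:big-cactus} identifies $J'_n$ with $\RC_n$ and gives the factorization.
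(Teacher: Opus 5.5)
Your reduction matches the paper's: by Theorem~\ref{thm:equivalence}, everything comes down to the Dynkin-reversal relation on invariants (Theorem~\ref{thm:inv-dynkin}). Your treatment of $q_{[1,n]}$ is correct: it equals $\Theta\circ\xi$, and $\xi$ fixes invariants. The gap is in proving $q_{[1,n]}t_iq_{[1,n]}=t_{n-i}$ on invariants, which is where all the content lies.

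First, $t_i$ is not $q_{[1,i]}q_{[1,i+1]}$. That product is $q_{i-1}q_i=p_i$, a cyclic shift of the first $i+1$ factors, and with it your target identity is already false on permutations. The correct element is $t_i=q_{[1,i]}q_{[1,i+1]}q_{[1,i]}q_{[1,i-1]}$.

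Second, your key lemma cannot carry the weight. If $\sigma$ is the naive cyclic rotation of factors, it does not preserve invariants. For $\mathfrak{sl}_2$, the invariant of $B(\varpi_1)^{\otimes2}$ is $2\otimes1$, but $1\otimes2$ is not invariant. (In the paper's tensor convention it is the \emph{second} factor of an invariant $x\otimes y$ that is highest weight, not the first.) If $\sigma$ is instead the commutor, the correct identity $q_{[k+1,n]}=q_{[1,k]}q_{[1,n]}\sigma_{X,Y}$ is just \eqref{eq:inv-sigma-positions}. It holds on the whole crystal, so it cannot produce a relation that fails off invariants.

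Uniqueness of the invariant in $B(\lambda)\otimes B(\lambda^*)$ does not close the gap either. It only tells you where a single prefix or suffix reversal sends the invariant labelled by a component. The relation asks whether the \emph{composites} $t_i$ and $t'_i$ induce the same permutation of such labels when multiplicities exceed one. Already for $n=3$, unwinding with \eqref{eq:inv-rho}, naturality and involutivity, $qt_1q=t_2$ on $\operatorname{Inv}(B_1\otimes B_2\otimes B_3)$ is exactly the hexagon identity $\sigma_{B_1\otimes B_2,B_3}=(\sigma_{B_1,B_3}\otimes1)(1\otimes\sigma_{B_2,B_3})$ restricted to invariants. On invariants, $\sigma_{X,Y}(x\otimes y)=\xi(y)\otimes\xi(x)$. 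So the left side keeps the label $K\cong B(\lambda_3^*)\subset B_1\otimes B_2$, while the right side permutes these components. Nothing in your argument shows that this permutation is trivial.

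Your type~$A$ fallback also falls short, since the statement concerns arbitrary finite normal crystals. The dictionary ``invariants $\leftrightarrow$ rectangular tableaux, $t_i\mapsto$ Bender--Knuth involution'' exists only for single-box factors (Lemma~\ref{lem:inv-dictionary}) and single-row factors (Section~\ref{sec:inv-rows}). You also cannot simply restrict the relation from the refined tensor power. The coarse $q_{[1,n]}$ is the fine $q_{[1,N]}$ composed with internal reversals of the blocks, and these are not the identity on a general $\iota_j(B(\lambda_j))$; for $\lambda_j=(2,1)$ they change the recording tableau.

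The missing step is the paper's propagation argument:
\begin{enumerate}
\item[(i)] Rewrite the relation as $t_i=t'_i$, i.e.\ $\tau^A_{B,C}=\bar\tau^{\,D}_{B,C}$ on invariants (Lemma~\ref{lem:inv-local}), a form that involves no full reversal.
\item[(ii)] Prove it for single boxes from the standard-tableau dictionary and Theorem~\ref{thm:first-kind} (Proposition~\ref{prop:inv-atomic}).
\item[(iii)] Refine general factors into boxes via row-reading embeddings and naturality, and use Westbury's factorisation (Lemma~\ref{lem:inv-factor}, Corollary~\ref{cor:inv-LR}). This shows that $\tau^A_{B,C}$ and $\bar\tau^{\,D}_{B,C}$ decompose into the \emph{same} sequence of atomic crossings, $t_j$ versus $t'_j$.
\end{enumerate}

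Your Gaudin fallback is the paper's second proof, but you only assert its one substantive step: that the eigenline covering over $\overline M_{0,n+1}(\mathbb R)$ is pulled back along the root-forgetting map. This needs projective invariance of the reduced Gaudin algebra on the smooth locus, plus an extension over the boundary (Lemma~\ref{lem:inv-gaudin-descent}); only then does Theorem~\ref{thm:big-cactus} finish the argument.
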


We give a combinatorial proof in type $A$ and, in
Section~\ref{subsec:inv-topological}, a topological proof valid for
finite normal crystals of any complex semisimple Lie algebra; see
Corollary~\ref{cor:inv-adapted-all-types}.

Theorem~\ref{thm:knit-are-evac} is the special case in which all factors are single-row crystals;
the standard-tableau case is obtained when all factors are $B(\varpi_1)$. Thus the tableau
relations studied above are the simplest instances of a more general phenomenon for invariant
tensors.

\subsection{The big cactus group and the geometry of invariant tensors}

The rectangular cactus group has a topological interpretation.
The ordinary cactus group is the equivariant fundamental group
\cite{DJS,HK}
\[
 \Cn_n=\pi_1^{\Sym_n}\bigl(\overline M_{0,n+1}(\mathbb R)\bigr),
\]
where $\Sym_n$ permutes $n$ marked points and fixes the remaining
point, denoted $\infty$. Henriques's \emph{big cactus group}
\cite{HenriquesBig} is
\[
 J'_n=\pi_1^{\Sym_n}\bigl(\overline M_{0,n}(\mathbb R)\bigr).
\]
Thus the distinction is whether the stable curve has a distinguished
root in addition to the $n$ permuted markings.

\begin{theorem}[Theorem~\ref{thm:big-cactus}]
\label{thm:big-cactus-intro}
For $n\geq3$, forgetting $\infty$ induces an isomorphism
\[
 \Cn_n\Big/\big\langle\!\big\langle
 qt_iq\,t_{n-i}:1\leq i<n
 \big\rangle\!\big\rangle
 \;\xrightarrow{\sim}\;J'_n.
\]
In particular, $\RC_n\cong J'_n$.
\end{theorem}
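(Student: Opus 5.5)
The plan is to work entirely on the topological side. We realize the forgetful map $\overline M_{0,n+1}(\mathbb R)\to\overline M_{0,n}(\mathbb R)$, which forgets $\infty$ and stabilizes, as an $\Sym_n$-equivariant map. Then we compute the induced map on equivariant fundamental groups, $\Cn_n\to J'_n$, and identify its kernel.

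\emph{Surjectivity.} The forgetful map is a fibration away from boundary strata. Its fiber over a generic real stable $n$-pointed curve is the real curve itself, a circle or a union of circles glued at nodes. The fiber is therefore connected, and the equivariant version of the long exact sequence (equivalently, the homotopy sequence of the Borel constructions) shows that $\Cn_n\to J'_n$ is surjective. Concretely, we check this with the Henriques presentation of $J'_n$: its generators are indexed by cyclic intervals $[i,j]$ of the $n$ points, and each is the image of some $q_{[k,l]}$. A cyclic interval that does not contain the point after which $\infty$ sits is an ordinary interval. One that does contain it is handled as follows: its complement is an ordinary interval, so its reversal equals $q\,q_{\text{complement}}$ up to the global flip. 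Since the global flip $q=q_{[1,n]}$ reverses the whole circle, it is a legitimate element of $J'_n$.

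\emph{The relations lie in the kernel.} In $J'_n$ the full reversal $q$ is the reflection of the cyclic configuration fixing the position of the forgotten point. Conjugating a reversal of the interval $[1,i+1]$ (or the element $t_i$ built from it via Theorem~\ref{thm:CGP}) by this reflection gives the reversal of the mirrored interval. Using the cyclic-interval relations of $J'_n$, we identify that mirrored reversal with $t_{n-i}$. Hence $qt_iq\,t_{n-i}\mapsto1$.

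\emph{The kernel is exactly generated by these relations.} This is the main obstacle. The cleanest route is a presentation comparison: take Henriques's presentation of $J'_n$ (generators $s_{[i,j]}$ for cyclic intervals, with cactus-type relations for nested and disjoint cyclic intervals and $s_{I}=s_{I^c}$ up to the global involution), and build an inverse homomorphism $J'_n\to\RC_n$. On non-wrapping intervals it is the obvious map. A wrapping interval $I$ is sent to $q\,q_{\overline{I^c}}$, where $\overline{I^c}$ is the mirrored complement. One then checks that each defining relation of $J'_n$ holds in $\RC_n$. Relations among non-wrapping intervals are cactus relations. Relations involving a wrapping interval reduce, after conjugating by $q$, to cactus relations together with the statement that conjugation by $q$ acts on every $q_{[k,l]}$ as $q_{[n+1-l,\,n+1-k]}$. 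That statement follows from the Dynkin-reversal relations, because each $q_{[k,l]}$ is a word in the $t_i$ (Theorem~\ref{thm:CGP}) and conjugation by $q$ maps it to the $\alpha$-image, which is $q_{[n+1-l,n+1-k]}$ by the standard $\alpha$-symmetry of the cactus generators. The two maps are mutually inverse on generators, which gives the isomorphism.

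The hardest bookkeeping will be the verification for pairs of overlapping wrapping intervals. I would try to reduce these to the disjoint and nested cases via the complement identification. The hypothesis $n\ge3$ should enter exactly here, since for small $n$ the complement of a wrapping interval may be empty or a singleton.
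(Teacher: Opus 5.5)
\paragraph{Verdict.} The proposal has a genuine gap in the injectivity step, which is the real content of the theorem.

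\paragraph{The first two steps.} Your surjectivity argument is essentially fine, with one caveat. The map $f$ is not a fibration over the boundary, so the long exact sequence does not literally apply. The conclusion is still true, and the paper gets it from the section $\sigma_n$. Your second step, that the relators die, is only asserted. It is true: the paper proves it as Lemma~\ref{lem:big-projected-paths}. There, the unrooted caterpillar $y$ is fixed by $w_0$, so $f_*(q)$ is a constant-path lift $r$ with $r f_*(t_i) r = f_*(t_{n-i})$.

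\paragraph{The gap.} Your injectivity argument rests on a presentation of $J'_n$ that you neither state precisely nor justify. The inverse map you define is not correct.

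In $J'_n$, the elements represented by paths in the closure of the base chamber form the full dihedral group $D_n$, the stabilizer of the cyclic order. This is larger than $\{1, f_*(q)\}$. Let $s_I$ denote the lift of $w_I$ crossing the wall of the diagonal $\{I, I^c\}$. Then $s_I s_{I^c} = \rho_I$, where $\rho_I$ is the dihedral reflection preserving that diagonal.
\begin{itemize}
\item $\rho_I$ depends on $I$, so there is no single ``global involution'' relating $s_I$ and $s_{I^c}$.
\item The dihedral generators, including the rotation, must appear both in any presentation and in the definition of the inverse map. You never say where the rotation goes.
\end{itemize}

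Your formula $\psi(s_I) = q\,q_{\overline{I^c}}$ gives the wrong permutation whenever $I^c$ is not centred. Take $n=5$ and $I = \{4,5,1\}$, so $I^c = [2,3]$ and the mirrored complement is $[3,4]$.
\begin{itemize}
\item You get $q\,q_{[3,4]}$. Its permutation is $(1\,5)$ times a $3$-cycle, not $w_I = (1\,4)$.
\item It is not even an involution: by (C3), $(q\,q_{[3,4]})^2 = q_{[2,3]}q_{[3,4]}$, whose permutation is a $3$-cycle.
\end{itemize}
So $\psi$ does not respect $s_I^2 = 1$, and $\bar f \circ \psi \neq \mathrm{id}$.

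To make your route work you would need all of the following:
\begin{itemize}
\item a verified presentation of $J'_n$ (the paper cites only the Davis--Januszkiewicz--Scott presentation, by polygonal subdivisions and dihedral symmetries, not a cyclic-interval cactus presentation);
\item images for the dihedral generators, for instance via $f_*(q)$ and $f_*(q_{[1,n-1]})$, which are constant-path lifts of two dihedral reflections;
\item a corrected $\psi$ on wrapping intervals;
\item the full relation check that you defer as ``bookkeeping''.
\end{itemize}

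\paragraph{How the paper avoids this.} The paper never uses a presentation of $J'_n$.
\begin{enumerate}
\item The $\Sym_{n-1}$-equivariant section $\sigma_n$ shows that $f_*$ restricts to an isomorphism from $H = \langle q_{[a,b]} : b \le n-1 \rangle$ onto $\pi_1^{\Sym_{n-1}}(Y_n, y) \cong \Cn_{n-1}$. This also gives surjectivity.
\item The mirabolic generation theorem of \cite{GHR}, conjugated by $q$, gives $\pi^{-1}(\Sym_{n-1}) = \langle H, t_1^{\mathrm R}, \dots, t_{n-2}^{\mathrm R} \rangle$.
\item In $\RC_n$, each $t_i^{\mathrm R}$ equals $t_i$, and $t_i \in H$ for $i \le n-2$.
\item Any element of $\ker \bar f$ has trivial permutation, so it lifts to $\pi^{-1}(\Sym_{n-1})$. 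By the previous two steps it is then the image of some $h \in H$. Since $f_*$ is injective on $H$, $h = 1$.
\end{enumerate}
The missing ingredient in your proposal is exactly such control over the preimage of $\Sym_{n-1}$, or a correct presentation of $J'_n$ together with a correct inverse map.
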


Consequently, the Dynkin-reversal relations and the adapted-word
relations give two presentations of the big cactus group in the
$t_i$ generators. A different geometric presentation was given by
Davis--Januszkiewicz--Scott \cite[Section~8.6]{DJS}; our description
identifies the kernel of the root-forgetting map explicitly.
The proof uses the caterpillar paths and the mirabolic generation
theorem of Gurenkova, Halacheva and the third author~\cite{GHR}.

This interpretation also explains the relations on invariant tensors.
By the Gaudin monodromy theorem~\cite{HKRW}, the cactus action on
tensor-product multiplicity sets is realized as monodromy of a
covering of $\overline M_{0,n+1}(\mathbb R)$. For the multiplicity
of the trivial representation, this covering is pulled back from
$\overline M_{0,n}(\mathbb R)$. On the smooth locus, the descent
follows from the projective invariance of the reduced Gaudin algebra
\cite[Theorem~4.15(2)]{IKR}; in
Section~\ref{subsec:inv-topological}, we extend it to the boundary.
This gives a topological proof of Theorem~\ref{thm:inv-dynkin} and
extends Theorem~\ref{thm:inv-adapted} to finite normal crystals of
arbitrary semisimple type.

The proof of Theorem~\ref{thm:big-cactus} uses only Sections~\ref{sec:prelim}
and~\ref{sec:first}, so Section~\ref{sec:big-cactus} can be read directly after them.

\subsection{Related work}

The big cactus group $J'n=\pi_1^{\Sym_n}(\overline M{0,n}(\mathbb R))$ was introduced by Henriques~\cite{HenriquesBig}.
Our presentation of it in Bender--Knuth generators uses two ingredients from
forthcoming work of Gurenkova, Halacheva and the third author~\cite{GHR}: a
geometric realization of the elements $t_i$ as paths in the real
Deligne--Mumford space, and a generation theorem for the mirabolic cactus
group.

The connection between Bender--Knuth involutions and the ordinary cactus
group was established independently by Chmutov, Glick and the second
author~\cite{CGP}, by Halacheva~\cite{Halacheva}, and by Berenstein and
Kirillov~\cite{BK16}. The closest precursor of
Theorem~\ref{thm:knit-are-evac} is the work of Garver, Patrias and
Thomas~\cite[Thm.~1.10]{GPT}. They show that reflection at a source of a
Dynkin quiver corresponds to a piecewise-linear toggle on the associated
minuscule reverse plane partition. Our knitting argument uses the same basic
idea in a more elementary form, following source reflections along a full
adapted reduced word for $w_0$.

The local-rule formulation used in Section~\ref{sec:invariants} is due to
Westbury~\cite[Def.~3.1]{Westbury18}. The general proof for invariant tensors
also uses the realization of the cactus action as monodromy of Gaudin
eigenlines and the projective invariance of the reduced Gaudin algebra
\cite{HKRW,IKR}. Further connections, including Bender--Knuth groups in
other settings, sorting networks, and related cactus actions, are discussed
in Section~\ref{sec:related}.

\subsection*{AI statement}
Claude and ChatGPT helped discover Theorem~\ref{thm:inv-dynkin} and its combinatorial proof, and suggested, with
computational evidence, that the quotient of $\Cn_n$ by the Dynkin-reversal relations is the big
cactus group. AI was also used extensively in editing.

\section{Preliminaries}\label{sec:prelim}

\subsection{Tableaux and Bender--Knuth involutions}

We identify a partition $\lambda=(\lambda_1\ge\lambda_2\ge\cdots)$ with its Young diagram.
A \emph{semistandard Young tableau} (SSYT) of shape $\lambda$ with entries in
$[n]=\{1,\dots,n\}$ is a filling of $\lambda$ that is weakly increasing along rows and strictly
increasing down columns. We write $\SSYT(\lambda,n)$ for the set of such tableaux, and
\[
\wt(T)=(\wt_1(T),\dots,\wt_n(T))
\]
for the content of $T$, where $\wt_a(T)$ is the number of entries equal to $a$.

Fix $i\in\{1,\dots,n-1\}$. An entry $i$ is called \emph{free} if there is no $i+1$ directly below it,
and an entry $i+1$ is called free if there is no $i$ directly above it. In each row, the free $i$'s
and $(i+1)$'s form a block
\[
i^a(i+1)^b
\]
for some $a,b\ge 0$. The \emph{Bender--Knuth involution} $t_i$ replaces this block by
\[
i^b(i+1)^a
\]
in every row. The resulting tableau is again semistandard. Thus $t_i$ exchanges the numbers of
$i$'s and $(i+1)$'s:
\[
\wt(t_iT)=s_i\wt(T),
\]
where $s_i$ interchanges the $i$-th and $(i+1)$-st coordinates. In particular, $t_i^2=\id$.

\begin{example}\label{ex:BK}
Let
\[
T=\begin{ytableau}
1&1&2\\
2&3
\end{ytableau}
\in \SSYT((3,2),3),
\qquad
\wt(T)=(2,2,1).
\]

For $i=2$, all three entries $2,2,3$ are free. The free block in the first row is $2$, so it becomes
$3$, while the block $23$ in the second row is unchanged. Hence
\[
t_2T=
\begin{ytableau}
1&1&3\\
2&3
\end{ytableau},
\qquad
\wt(t_2T)=(2,1,2).
\]

For $i=1$, the $1$ in the first column and the $2$ below it are not free. The remaining free block
in the first row is $12$, so it is unchanged. Thus
$
t_1T=T.
$
\end{example}

\begin{definition}
The \emph{Berenstein--Kirillov group} $\BK_n$ is the group generated by
$t_1,\dots,t_{n-1}$ acting on
\[
\bigsqcup_{\lambda}\SSYT(\lambda,n),
\]
where the union is over all partitions $\lambda$. The \emph{rectangular Berenstein--Kirillov
group} $\BKR_n$ is the group generated by the same operators acting only on rectangular shapes:
\[
\bigsqcup_{k,m\ge1}\SSYT((m^k),n).
\]
\end{definition}

Thus a word in the $t_i$ is trivial in $\BK_n$ if and only if it acts trivially on tableaux of
every shape, and trivial in $\BKR_n$ if and only if it acts trivially on every rectangle. Thus $\BKR_n$ is a quotient of $\BK_n$, via restriction to rectangular shapes:
\[
\BK_n\twoheadrightarrow \BKR_n.
\]

We will use repeatedly the following fact, due to Berenstein and Kirillov
\cite[Corollary~1.1]{BK}; see also \cite[Remark~1.9]{CGP}.

\begin{lemma}\label{lem:t1t2}
For every $n\ge3$,
\[
(t_1t_2)^6=1
\]
in $\BK_n$, and hence also in $\BKR_n$.
\end{lemma}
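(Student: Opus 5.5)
Since $(t_1t_2)^6$ involves only the letters $1,2,3$, my plan is to reduce to tableaux with entries in $\{1,2,3\}$ and then use the cactus relations. First, let $T\in\SSYT(\lambda,n)$. The operators $t_1$ and $t_2$ change only the entries $1,2,3$, and the entries $\le 3$ of $T$ form a subtableau $T_{\le3}$ of some shape $\mu\subseteq\lambda$. The rule for $t_1$ and $t_2$ uses only the positions of the entries $1,2,3$ and their vertical adjacencies among themselves: an entry $3$ is free for $t_2$ unless there is a $2$ directly above it, and an entry greater than $3$ never blocks freeness. Hence $t_1,t_2$ act on $T$ by acting on $T_{\le3}$ and leaving everything else fixed. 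So it suffices to prove $(t_1t_2)^6=1$ on $\SSYT(\mu,3)$ for every $\mu$, which is the statement in $\BK_3$. The statement for $\BKR_n$ then follows immediately, because $\BKR_n$ is a quotient of $\BK_n$.

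For $n=3$, I would use the description of $\BK_3$ as a quotient of the cactus group, citing \cite{CGP} (stated later in the paper as Theorem~\ref{thm:CGP}). In the CGP identification, $q_1=t_1$ and $q_2=t_1t_2t_1$. The cactus relation $q_2q_1q_2=q_{[2,3]}$ expresses the generator $q_{[2,3]}$ in terms of the $t_i$. The one-letter BK generator $t_2$ is, by the defining formula, $t_2 = q_1q_2q_1q_2$ or similar, i.e.\ $t_2=q_{[1,2]}q_{[1,3]}q_{[1,1]}q_{[1,3]}$ up to the exact normalization. Then $t_1t_2=q_1\cdot(\text{that})$, and the goal is to show this element has order dividing $6$ in $\Cn_3$ modulo the relations that hold.

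The main obstacle is that in the free cactus group $\Cn_3$ this element likely has infinite order. So the relation genuinely uses more than the cactus relations, namely the structure of the action on $\mathfrak{sl}_3$ crystals. My first approach would therefore be direct. I would observe that $(t_1t_2)^3$ is, up to conjugation, $(q_2q_1)^3$, a power of promotion on $3$ letters. Then I would show that $q_2$ and $q_1$ generate an action factoring through the dihedral group of order $12$.

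Concretely, I would verify that $t_1t_2t_1=q_2$ (evacuation) and that the element $t_2 q_2 t_2 q_2$ is trivial, i.e.\ that evacuation conjugates $t_2$ appropriately on $3$-letter tableaux. The key step for this is a Gelfand–Tsetlin pattern computation. Tableaux in $\SSYT(\mu,3)$ correspond to GT patterns with top row $\mu$. On these patterns, $t_1$ and $t_2$ become the piecewise-linear toggles $x\mapsto \max(\cdot)+\min(\cdot)-x$ on the middle row and on the bottom entry. I would then compute $(t_1t_2)^6$ explicitly on these few coordinates and check, case by case over the finitely many linearity regions, that it is the identity.

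This case-by-case check is the hardest step. I would organize it by working tropically: I would lift $t_1$ and $t_2$ to subtraction-free birational maps on the three free GT coordinates $(a,b,c)$ and verify $(t_1t_2)^6=\id$ as a rational identity, which then tropicalizes to the tableau statement.
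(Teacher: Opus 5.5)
Your reduction to $n=3$ is correct. Whether an entry $1$, $2$ or $3$ is free depends only on the positions of the entries $\le 3$, and these fill a subdiagram $\mu\subseteq\lambda$. So $t_1,t_2$ act on $T$ through $T_{\le3}\in\SSYT(\mu,3)$. The paper gives no proof of this lemma; it only cites \cite[Corollary~1.1]{BK} and \cite[Remark~1.9]{CGP}. A direct Gelfand--Tsetlin verification is therefore a legitimate self-contained route. As written, however, your $n=3$ argument contains a false step and defers the only real computation.

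The cactus paragraph should be dropped. By Theorem~\ref{thm:CGP}, $\Cn_3=\langle t_1,t_2\mid t_1^2=t_2^2=1\rangle$ is infinite dihedral, so $t_1t_2$ certainly has infinite order there. Also $t_2=q_1q_2q_1$, not $q_1q_2q_1q_2$. More seriously, your claim that $t_2q_2t_2q_2$ is trivial is false. We have $t_2q_2t_2q_2=(t_2t_1)^4$, and on one-box tableaux $t_2t_1$ is a $3$-cycle, so $(t_2t_1)^4=t_2t_1\neq1$. Even if it held, it would give order dividing $4$, not $6$. Nor can you mean the Dynkin reversal $q_2t_2q_2=t_1$. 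That relation is equivalent to $(t_1t_2)^3=1$, which holds on rectangles but fails on shape $(2,1)$: there $t_1$ fixes both tableaux of content $(1,1,1)$ and $t_2$ interchanges them. It is exactly the relation that separates $\BKR_3$ from $\BK_3$, so no argument in $\BK_3$ can pass through it.

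The Gelfand--Tsetlin computation you propose does work, and it needs neither a case split over linearity regions nor a birational lift. Take top row $\mu$, middle row $(x_1,x_2)$ and bottom entry $y$. The involutions are the piecewise-linear reflections
\[
t_1\colon\ y\longmapsto x_1+x_2-y,\qquad
t_2\colon\ (x_1,x_2)\longmapsto\bigl(\mu_1+\max(\mu_2,y)-x_1,\ \min(\mu_2,y)+\mu_3-x_2\bigr).
\]
Fix $\mu$ and the content $c=\wt(T)$. Then $y=c_1$ and $x_1+x_2=c_1+c_2$, so within a content class the pattern is determined by $x_1$ alone.

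Now $t_1$ never changes $x_1$. The involution $t_2$ acts on $x_1$ by $x_1\mapsto C-x_1$, with $C=\mu_1+\max(\mu_2,y)$ depending only on the current content. Since $\wt(t_iT)=s_i\wt(T)$ and $s_1s_2$ has order $3$, the three contents at which the $t_2$'s in $(t_1t_2)^3$ are applied depend only on the initial content. Moreover, $(t_1t_2)^3$ returns to that initial content.

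Hence $(t_1t_2)^3$ acts on each content class by $x_1\mapsto K(\mu,c)-x_1$, which is an involution, and so $(t_1t_2)^6=1$. On shape $(2,1)$ with content $(1,1,1)$ this is the swap of $x_1=2$ and $x_1=1$ noted above, which is why the order is $6$ rather than $3$.
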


The analogous statement for other pairs of adjacent generators is false.

\subsection{Evacuation, promotion, and the elements \texorpdfstring{$q_{[i,j]}$}{q[i,j]}}

Following \cite{BK,CGP}, define
\[
p_i=t_it_{i-1}\cdots t_1,
\qquad
q_i=t_1(t_2t_1)(t_3t_2t_1)\cdots(t_it_{i-1}\cdots t_1)
     =q_{i-1}p_i,
\]
and, for $1\le i<j\le n$,
\[
q_{[i,j]}=q_{j-1}\,q_{j-i}\,q_{j-1}.
\]
The element $p_{n-1}$ acts as \emph{promotion}, while $q_{n-1}$ acts as
\emph{evacuation}. We write
$
p:=p_{n-1},
$
and
$
q:=q_{n-1}.
$
In particular, $q_{[1,j]}=q_{j-1}$.
We compose operators from the right: in
$t_{j_1}t_{j_2}\cdots t_{j_N},$
the factor $t_{j_N}$ acts first.

The following classical fact identifies the elements $q_i$ with evacuation
\cite[Theorem~2.1]{BK}; see also \cite[Proposition~4.2]{CGP}.

\begin{lemma}\label{lem:q-is-evac}
For every partition $\lambda$, the element $q_{n-1}$ acts on
$\SSYT(\lambda,n)$ as Sch\"utzenberger evacuation $\ev_\lambda$.
\end{lemma}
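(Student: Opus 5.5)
The plan is to prove the statement by induction on $n$, using the recursion $q_{n-1}=q_{n-2}\,p_{n-1}$ together with a matching recursive description of evacuation. Write $\partial_k$ for \emph{partial promotion} on entries $\le k$. It acts on a tableau $T$ with entries in $[n]$ as follows: remove the entries equal to $1$ from the subtableau $T_{\le k}$, rectify the remaining skew tableau of entries $2,\dots,k$ by jeu de taquin into the vacated cells, subtract $1$ from these entries, and fill the cells left empty with $k$. All entries $>k$ are left in place. The argument splits into two independent claims, which combine as follows:
\[
q_{n-1}=q_{n-2}\,p_{n-1}=\ev^{(n-1)}\,\partial_n=\ev_\lambda .
\]
Here $\ev^{(n-1)}$ denotes evacuation of the subtableau of entries $\le n-1$. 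The first equality is the recursion, and the second is induction applied to $T_{\le n-1}$ after applying the first claim below. Note that $q_{n-2}$ involves only $t_1,\dots,t_{n-2}$, which fix the entries $n$ and act on $T_{\le n-1}$ exactly as on a tableau of smaller alphabet.

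\textbf{Claim 1.} The element $p_{i}=t_it_{i-1}\cdots t_1$ acts as $\partial_{i+1}$.

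To prove this, I will track the tableau after applying $t_1,t_2,\dots,t_j$ in turn. The invariant to establish is the following: after $t_j\cdots t_1$, the cells that ended up holding the value $j+1$ from the original $1$'s form a horizontal strip. This strip is exactly the position of the jeu de taquin ``hole'' after the entries $2,\dots,j+1$ have slid, and the entries $2,\dots,j+1$ have been lowered by one. The inductive step from $j-1$ to $j$ is a local analysis of the free $j$'s and $(j+1)$'s in each row. The hole cells (currently labelled $j$) and the genuine entries $j+1$ are interleaved in each row. Non-free pairs, that is, a $j$ directly above a $j+1$, correspond to vertical slides of jeu de taquin. The rearrangement $j^a(j+1)^b\mapsto j^b(j+1)^a$ of the free block corresponds to horizontal slides. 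Checking that these two kinds of moves coincide cell by cell is the main obstacle. I would first do it for a single row pair and then argue that the rows interact only through the non-free pairs. At the last step, $j=i$, the hole cells carry the value $i+1$, which is exactly the definition of $\partial_{i+1}$.

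\textbf{Claim 2.} $\ev_\lambda=\ev^{(n-1)}\circ\partial_n$.

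For this I will use the shape-chain description of evacuation. The shapes $\emptyset=\lambda^{(0)}\subset\lambda^{(1)}\subset\cdots\subset\lambda^{(n)}=\lambda$ of $\ev_\lambda(T)$ are obtained by rectifying successive terminal segments of $T$ after complementing the entries $a\mapsto n+1-a$. The step $\partial_n$ places the cells determined by the first rectification (removal of the $1$'s) and labels them $n$. The remaining labels are then determined by the same procedure applied to $\partial_n(T)_{\le n-1}$. This is precisely $\ev^{(n-1)}$ on that subtableau. The identification of the two descriptions is classical (Sch\"utzenberger; see also Gansner and Stanley's treatment of promotion and evacuation), and I would cite it rather than reprove it.

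The base case $n=2$ holds because $q_1=t_1$, and on a tableau with entries in $\{1,2\}$ evacuation is exactly $t_1$.
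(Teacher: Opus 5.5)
\textbf{Verdict.} Your argument is correct. The paper does not prove this lemma itself; it cites \cite[Theorem~2.1]{BK} and \cite[Proposition~4.2]{CGP}.

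\textbf{Comparison.} Your route is the standard one. You identify $p_{n-1}=t_{n-1}\cdots t_1$ with promotion by letting the holes left by the $1$'s pass through the values $2,3,\dots$ one at a time. You then combine Sch\"utzenberger's recursion $\ev_\lambda=\ev^{(n-1)}\circ\partial_n$ with $q_{n-1}=q_{n-2}\,p_{n-1}$ and induct on $n$. Your shape-chain argument for Claim~2 correctly connects this recursive definition to the paper's working description $\ev_\lambda(T)=P(\mathrm{rc}(w(T)))$, using the compatibility of rectification with restriction to intervals of values. What your version adds is a self-contained proof in place of a citation.

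\textbf{One point to make explicit in Claim~1.} You need to fix the order in which the holes move at stage $j$. It must be the order of the original slides: right to left along the horizontal strip of holes, equivalently top to bottom.
\begin{itemize}
\item With that order, each non-free hole drops into the $j+1$ below it and stops. The cell to its right is then either a hole that has just dropped or an entry $\ge j+2$.
\item Each free hole moves right across exactly the free $(j+1)$'s of its row.
\item The net effect is $t_j$.
\end{itemize}
In another order, a lower hole can run across non-free $(j+1)$'s. For example, take $T$ with rows $1112/2344$, so that $\partial_3(T)$ has rows $1133/2344$. If the hole at $(2,2)$ moves first, the result has rows $1134/2344$, which is not column-strict. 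So the value-by-value reorganisation of jeu de taquin is where your step ``rows interact only through non-free pairs'' needs its justification.
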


We will use the following standard description of evacuation. Let $w(T)$ be the row reading word of
$T$, obtained by reading the rows from left to right, starting with the bottom row. If $P(w)$
denotes the RSK row-insertion tableau, then
\begin{equation}\label{eq:rc}
  \ev_\lambda(T)=P\bigl(\mathrm{rc}(w(T))\bigr),
\end{equation}
where $\mathrm{rc}$ reverses the word and replaces each letter $a$ by $n+1-a$;
see \cite[App.~A1]{Stanley} and \cite{Fulton}.

Evacuation is an involution and reverses the content:
\[
\ev_\lambda^2=\id,
\qquad
\wt(\ev_\lambda T)=w_0\cdot\wt(T),
\]
where $w_0$ reverses the coordinates; see \cite[App.~A]{Fulton} and
\cite[App.~A1]{Stanley}.

\subsection{The cactus group}

The \emph{cactus group} $\Cn_n$ \cite{HK} is generated by elements
$q_{[i,j]}$, $1\le i<j\le n$, with relations
\begin{enumerate}
\item[(C1)] $q_{[i,j]}^2=1$;
\item[(C2)] $q_{[i,j]}q_{[k,l]}=q_{[k,l]}q_{[i,j]}$ if $j<k$;
\item[(C3)] $q_{[i,j]}q_{[k,l]}q_{[i,j]}
 =q_{[i+j-l,\,i+j-k]}$ if $i\le k<l\le j$.
\end{enumerate}

We use the same notation $q_{[i,j]}$ for the cactus generators and for their images in
$\BK_n$; the ambient group will always be clear from context.
The connection with the Berenstein--Kirillov group is given by the following result.

\begin{theorem}[{\cite[Thms.~1.4, 1.7, 1.8]{CGP}}]\label{thm:CGP}
The Bender--Knuth operators $q_{[i,j]}$ satisfy the cactus relations, giving a natural
quotient
\[
\Cn_n\twoheadrightarrow\BK_n.
\]

The cactus group also has a presentation with generators $t_1,\dots,t_{n-1}$. With
\[
q_i=t_1(t_2t_1)\cdots(t_it_{i-1}\cdots t_1),
\qquad
q_{[i,j]}=q_{j-1}q_{j-i}q_{j-1},
\]
the defining relations are
\[
t_i^2=1,\qquad
t_it_j=t_jt_i \quad (|i-j|>1),\qquad
(t_iq_{[j,k]})^2=1 \quad (i+1<j<k).
\]
Conversely,
$
t_1=q_{[1,2]},\qquad
t_2=q_{[1,2]}q_{[1,3]}q_{[1,2]},
$
and, for $i\ge3$,
$
t_i=q_{[1,i]}q_{[1,i+1]}q_{[1,i]}q_{[1,i-1]}.
$
\end{theorem}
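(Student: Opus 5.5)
The statement has two parts, and I would treat them separately: first the purely group-theoretic claim that $\Cn_n$ admits the presentation in the generators $t_i$, and then the combinatorial claim that the Bender--Knuth operators satisfy it (equivalently, the relations (C1)--(C3)).

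For the presentation, I would define mutually inverse homomorphisms. From the $t_i$-side to $\Cn_n$, send $t_1\mapsto q_{[1,2]}$, $t_2\mapsto q_{[1,2]}q_{[1,3]}q_{[1,2]}$ and $t_i\mapsto q_{[1,i]}q_{[1,i+1]}q_{[1,i]}q_{[1,i-1]}$ for $i\ge 3$, and check the three relations.
\begin{itemize}
\item $t_i^2=1$ follows from (C1) and (C3), since $q_{[1,i]}q_{[1,i+1]}q_{[1,i]}$ and $q_{[1,i-1]}$ commute.
\item Far commutation follows by conjugating into a configuration where (C2) applies.
\item $(t_iq_{[j,k]})^2=1$ for $i+1<j$ is again (C2)/(C3) bookkeeping.
\end{itemize}
In the other direction, send $q_{[i,j]}\mapsto q_{j-1}q_{j-i}q_{j-1}$ with $q_i=q_{i-1}p_i$. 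The key intermediate identities are:
\begin{itemize}
\item $q_i^2=1$;
\item $q_i t_a q_i=t_{i+1-a}$ for $a\le i$, proved by induction on $i$ from the $t$-relations;
\item $q_{[1,i]}=q_{i-1}$, which is the same as $q_{[1,i]}$ in the other group.
\end{itemize}
Once these are in hand, (C1)--(C3) reduce to conjugation formulas for intervals inside $[1,j]$. Finally I would check that both composites are the identity on generators, using $q_{i-1}q_i=p_i$ to recover $t_i=p_{i-1}^{-1}p_i$.

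For the action on tableaux, I would interpret $q_{[i,j]}$ concretely: it acts on $T$ by applying Sch\"utzenberger evacuation, relative to the alphabet $\{i,\dots,j\}$, to the skew subtableau of entries in $[i,j]$, while fixing all other entries. Given Lemma~\ref{lem:q-is-evac} for straight shapes, I would first show that $q_{j-1}$ acts as evacuation on the subtableau with entries $\le j$, since the $t_a$ with $a<j$ do not see larger entries. Conjugating $q_{j-i}$ by $q_{j-1}$ (which reverses the alphabet $[1,j]$) then yields evacuation on the interval $[i,j]$ applied to a skew shape; this uses that evacuation on $[1,j]$ rectifies compatibly, via \eqref{eq:rc} and Knuth equivalence. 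With this description, the relations follow:
\begin{itemize}
\item (C1) holds because skew evacuation is an involution.
\item (C2) holds because the two intervals touch disjoint sets of entries.
\item (C3) holds because evacuating $[i,j]$ reverses the alphabet, carrying the subalphabet $[k,l]$ to $[i+j-l,i+j-k]$ and intertwining the corresponding evacuations.
\end{itemize}

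I expect the main obstacle to be the identification of $q_{[i,j]}$ with evacuation on the skew subtableau of entries in $[i,j]$, specifically the compatibility of skew evacuation with rectification that is needed for (C3). My first approach would be to work on reading words through \eqref{eq:rc}, and to use the fact that jeu de taquin commutes with the reverse-complement operation on a subinterval of the alphabet.
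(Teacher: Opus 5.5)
There is a genuine gap in your proof of the presentation. It rests on the ``key intermediate identity'' $q_it_aq_i=t_{i+1-a}$ for $a\le i$, and this identity is false in $\Cn_n$.

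\textbf{Why the identity fails.} It is the Dynkin-reversal relation for the first $i+1$ strands; for $i=n-1$ it is exactly \eqref{eq:star-intro}. The paper shows that relations of this kind are \emph{not} consequences of the cactus relations. Already for $i=2$, $a=1$ it says $t_1t_2t_1t_2t_1=t_2$, i.e.\ $(t_1t_2)^3=1$. Three independent checks show this fails:
\begin{itemize}
\item For $n=3$ the third family of $t$-relations is empty, so $\Cn_3=\langle t_1\rangle*\langle t_2\rangle\cong D_\infty$, where $(t_1t_2)^3\ne 1$.
\item For any $n$, take the sign character with $\chi(t_1)=-1$ and $\chi(t_a)=1$ for $a\ge2$. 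It kills every defining relator, since each generator occurs in each relator an even number of times. But $\chi(q_2t_1q_2t_2)=-1$; compare Proposition~\ref{prop:promotion-weaker}.
\item It fails even on tableaux. On shape $(2,1)$ with $n=3$, $t_2$ swaps the tableaux with rows $(1,2),(3)$ and $(1,3),(2)$, while $t_1$ fixes both. So $t_1t_2t_1\ne t_2t_1t_2$ in $\BK_3$.
\end{itemize}
So the identity cannot be obtained by induction from the $t$-relations. If you fed it into (C3), conjugation by $q_{[1,j]}$ would act on $t$-words by index reflection. That yields the relations $q_{[i,j]}=q^0_{[i,j]}$ of Proposition~\ref{prop:first-kind-equiv}, which define the proper quotient $\RC_n$, not $\Cn_n$. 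The correct conjugation rule is \eqref{eq:big-anchor-conjugation}: the full reversal sends $t_a$ to a right-anchored element, not to $t_{m-a}$.

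\textbf{What to use instead.} You need a shift identity, and it already holds in $\Gam_n$: for $1\le k<l\le j\le n-1$,
\[
p_j^{-1}\,q_{[k,l]}\,p_j=q_{[k+1,l+1]}.
\]
To see this, note that $t_{l+1},\dots,t_j$ commute with $q_{[k,l]}\in\langle t_1,\dots,t_{l-1}\rangle$. So $p_j$ may be replaced by $p_l=q_{l-1}q_l$, and then $p_l^{-1}q_{[k,l]}p_l=q_lq_{l-k}q_l=q_{[k+1,l+1]}$.

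Next, $q_{l-1}q_{j-1}=p_lp_{l+1}\cdots p_{j-1}$, so
\[
q_{j-1}q_{[k,l]}q_{j-1}=(q_{l-1}q_{j-1})^{-1}\,q_{[1,l-k+1]}\,(q_{l-1}q_{j-1}).
\]
Shifting $j-l$ times gives $q_{[1,j]}q_{[k,l]}q_{[1,j]}=q_{[j+1-l,\,j+1-k]}$. General (C3) then follows from $q_{[i,j]}=q_{[1,j]}q_{[1,j-i+1]}q_{[1,j]}$.

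Hence (C1) and (C3) are identities in $\Gam_n$. (C2) is exactly the third family of relations, because $q_{[i,j]}\in\langle t_1,\dots,t_{j-1}\rangle$. A minor point in the easy direction: far commutation of the cactus elements $t_i$ comes from (C3), not (C2), since $t_j$ centralizes every $q_{[a,b]}$ with $b\le j-1$.

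\textbf{Consequence for the tableau half.} Since (C1) and (C3) already hold in $\Gam_n$, they hold in $\BK_n$ for free. So the (C3)-compatibility you flagged as the main obstacle is not needed.

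The only relation requiring tableau input is (C2), equivalently $(t_aq_{[k,l]})^2=1$ for $a+1<k$. For this you need two facts about the word $q_{[k,l]}$:
\begin{itemize}
\item it fixes all entries outside $[k,l]$;
\item it changes the entries in $[k,l]$ in a way that depends only on the skew subtableau they form.
\end{itemize}
Your skew-evacuation description supplies both. Its cleanest proof is crystal-theoretic. The word $q_{j-1}$ acts on the entries $\le j$ as the $\mathfrak{gl}_j$ Sch\"utzenberger involution, which intertwines $e_a$ with $f_{j-a}$. So it conjugates the Levi involution for the simple roots $1,\dots,j-i$ (that is, $q_{j-i}$) to the Levi involution for the simple roots $i,\dots,j-1$. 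The latter acts only on the letters $i,\dots,j$, through the reading word of their skew subtableau.
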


We will often work first in the following group, where only the ``obvious'' relations are imposed.

\begin{definition}
The \emph{commutation group} is the right-angled Coxeter group
\[
\Gam_n=
\big\langle t_1,\dots,t_{n-1}\mid
t_i^2=1,\;
t_it_j=t_jt_i\ \text{for }|i-j|>1
\big\rangle .
\]
\end{definition}

Any identity in $\Gam_n$ therefore holds in $\Cn_n$, $\BK_n$, and $\BKR_n$.

The index-reversal map
$
\alpha:t_i\longmapsto t_{n-i}
$
preserves the defining relations of $\Gam_n$ and hence is an automorphism of $\Gam_n$.
It does not, in general, descend to the cactus group: already for $n=4$, the image under
$\alpha$ of the relation $(t_1q_{[3,4]})^2=1$ fails in $\Cn_4$.
This is why we state the results below at the level of words in $\Gam_n$.

For a word $w=(j_1,\dots,j_N)$, write
$
t_w=t_{j_1}\cdots t_{j_N},
\rev(w)=(j_N,\dots,j_1).
$
Then $t_{\rev(w)}=t_w^{-1}$.

\section{Dynkin-reversal relations}\label{sec:first}

Throughout this section, $\lambda=(m^k)$ is a $k\times m$ rectangle.

\subsection{Evacuation on a rectangle}

\begin{definition}
For $T\in\SSYT((m^k),n)$, let $\theta(T)$ be obtained by rotating $T$ by $180^\circ$ and
replacing each entry $a$ by $n+1-a$. Thus
$
\theta(T)_{r,c}=n+1-T_{k+1-r,\,m+1-c}.
$
\end{definition}

Both rotation and complementation reverse order, so their composition preserves weak increase
along rows and strict increase down columns. Hence
\[
\theta(T)\in\SSYT((m^k),n),
\qquad
\theta^2=\id.
\]

\begin{lemma}\label{lem:theta-is-evac}
For rectangular $\lambda$, we have
\[
\theta=\ev_\lambda=q_{n-1}.
\]
\end{lemma}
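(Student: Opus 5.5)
The equality $\ev_\lambda=q_{n-1}$ is Lemma~\ref{lem:q-is-evac}, so it remains to show $\theta=\ev_\lambda$ on a rectangle. I would use the description \eqref{eq:rc}: $\ev_\lambda(T)=P(\mathrm{rc}(w(T)))$. The strategy is to show that $\theta(T)$ is itself a tableau whose reading word is Knuth equivalent to $\mathrm{rc}(w(T))$. Since $\theta(T)$ is a semistandard tableau, it equals the insertion tableau of its own reading word, and the claim follows.

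First, compute $\mathrm{rc}(w(T))$. The word $w(T)$ reads the rows $R_k,R_{k-1},\dots,R_1$ of $T$ (bottom to top), each left to right. Reversing gives the rows $R_1,\dots,R_k$ in that order, each read right to left. Complementing then gives the word
\[
\overline{R_1}^{\,\rev}\,\overline{R_2}^{\,\rev}\cdots\overline{R_k}^{\,\rev},
\]
where $\overline{R}^{\,\rev}$ denotes row $R$ reversed with $a\mapsto n+1-a$. By definition of $\theta$, row $r$ of $\theta(T)$ is $\overline{R_{k+1-r}}^{\,\rev}$. Hence the row reading word of $\theta(T)$ (bottom row first) is $\overline{R_1}^{\,\rev}\,\overline{R_2}^{\,\rev}\cdots\overline{R_k}^{\,\rev}$. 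This is exactly $\mathrm{rc}(w(T))$.

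Thus $\mathrm{rc}(w(T))=w(\theta(T))$ on the nose, with no Knuth moves needed. The rectangular shape is used precisely here: the $180^\circ$ rotation of a rectangle is again the same rectangle, aligned on the left. Hence the rotated rows form a valid Young diagram of the same shape, and $\theta(T)\in\SSYT((m^k),n)$, as already noted in the text.

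Finally, I would invoke the standard fact that $P(w(S))=S$ for any semistandard tableau $S$: row-inserting the reading word bottom row first reconstructs $S$. This gives
\[
\ev_\lambda(T)=P\bigl(\mathrm{rc}(w(T))\bigr)=P\bigl(w(\theta(T))\bigr)=\theta(T).
\]
There is no real obstacle. The only point needing care is the bookkeeping of the reading-word conventions (bottom row first, composition order), so that the reversed, complemented word matches the reading word of $\theta(T)$ exactly. A backup check is that both maps reverse content, which is consistent with the equality.
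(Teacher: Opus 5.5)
Your proof is correct and is essentially the paper's argument made explicit. The paper cites the Berenstein--Zelevinsky description of evacuation as rotation, complementation and rectification, and notes that no rectification is needed because a rotated rectangle already has straight shape; you verify this directly through \eqref{eq:rc} by checking $\mathrm{rc}(w(T))=w(\theta(T))$ and then using $P(w(S))=S$, and your reading-word bookkeeping (row $r$ of $\theta(T)$ is row $k+1-r$ of $T$, reversed and complemented) is right.
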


This is standard. Berenstein and Zelevinsky \cite{BZ} describe evacuation on an arbitrary shape
as rotation, complementation, and rectification. For a rectangle, rotation again gives a straight
shape, so no rectification is needed. The same statement is explicitly made for row-strict
tableaux in \cite[Lem.~3.1]{PP}. 

\begin{lemma}\label{lem:theta-conj}
On rectangular shapes,
\[
\theta\, t_i\, \theta=t_{n-i},
\qquad 1\le i\le n-1.
\]
\end{lemma}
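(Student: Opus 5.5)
We prove the identity directly from the definition of $\theta$. We do not use Lemma~\ref{lem:theta-is-evac}, since the statement only concerns the map $\theta$ itself. Fix $T\in\SSYT((m^k),n)$ and set $U=\theta(T)$. Since $\theta$ is an involution, it suffices to show that $\theta(t_iT)=t_{n-i}(\theta T)$.

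The plan is to check that $\theta$ transports every ingredient in the definition of $t_i$ to the corresponding ingredient of $t_{n-i}$. Write $j=n-i$, so that complementation $a\mapsto n+1-a$ sends the pair $\{i,i+1\}$ to $\{j+1,j\}$. In particular it sends $i$ to $j+1$ and $i+1$ to $j$.

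\emph{Step 1: free entries correspond.} Consider an entry $i$ of $T$ in cell $(r,c)$. It is non-free exactly when $T_{r+1,c}=i+1$. Under $\theta$ the cell $(r,c)$ goes to $(k+1-r,m+1-c)$, and the cell below it, $(r+1,c)$, goes to the cell directly above, $(k-r,m+1-c)$. Hence the image entry $j+1$ is non-free in $U$ exactly when $U_{k-r,m+1-c}=j$, which is precisely the condition $T_{r+1,c}=i+1$. The same argument with the roles reversed handles entries $i+1$. Because $\lambda$ is a rectangle, the rotated tableau has exactly the same cells, so the neighbour relations match with no boundary effects. This is the only place rectangularity is used.

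\emph{Step 2: the row blocks correspond.} Row $r$ of $T$ maps to row $k+1-r$ of $U$, read in reverse order. So a free block $i^a(i+1)^b$ in row $r$ becomes $j^b(j+1)^a$ in the corresponding row of $U$, and this is again a contiguous block of free entries. Applying $t_i$ in $T$ gives the block $i^b(i+1)^a$. Its image under $\theta$ is $j^a(j+1)^b$, which is exactly what $t_{j}$ produces from $j^b(j+1)^a$.

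All entries other than $i,i+1$ (respectively $j,j+1$) are untouched by the respective involutions, and $\theta$ maps these untouched entries bijectively onto each other. Together with Steps 1 and 2, this gives $\theta t_i=t_{n-i}\theta$ cell by cell. The only step needing care is Step 1, namely the observation that ``directly below'' is carried to ``directly above''. That is exactly what matches the definition of freeness for $i$ with the definition of freeness for $j+1$, and it is what makes the Bender--Knuth rule symmetric under $\theta$.
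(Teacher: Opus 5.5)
Your proof is correct and follows essentially the same route as the paper: rotation swaps ``above'' and ``below'' and complementation swaps the values $i,i+1$ with $n-i+1,n-i$, so non-free pairs for $t_i$ are carried to non-free pairs for $t_{n-i}$. A free block $i^a(i+1)^b$ then becomes $(n-i)^b(n-i+1)^a$, on which $t_{n-i}$ acts compatibly; your explicit index check in Step~1 just spells out the paper's one-line remark.
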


\begin{proof}
Write $\theta=\kappa\circ\rho$, where $\rho$ is rotation by $180^\circ$ and
$\kappa(a)=n+1-a$. Thus $\{i,i+1\}$ is sent to
$\{n-i,n-i+1\}$, with the two values interchanged.

Rotation also interchanges ``above'' and ``below''. Hence the non-free pairs for $t_i$
are carried exactly to the non-free pairs for $t_{n-i}$, and therefore free entries are
carried to free entries.

If the free block in a row of $T$ is
$
i^a(i+1)^b,
$
then the corresponding free block in $\theta(T)$ is
$
(n-i)^b(n-i+1)^a.
$
Applying $t_{n-i}$ gives
$
(n-i)^a(n-i+1)^b,
$
which is the image under $\theta$ of the block
$
i^b(i+1)^a.
$
Thus $\theta t_i\theta=t_{n-i}$.
\end{proof}
\begin{theorem}\label{thm:first-kind}
In $\BKR_n$,
\[
q_{n-1}t_iq_{n-1}=t_{n-i},
\qquad 1\le i\le n-1.
\]
\end{theorem}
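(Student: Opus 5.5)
The relation $q_{n-1}t_iq_{n-1}=t_{n-i}$ is an identity in $\BKR_n$. By definition, such an identity holds exactly when both sides act identically on $\SSYT((m^k),n)$ for every rectangle $(m^k)$. I will therefore fix an arbitrary rectangle $\lambda=(m^k)$ and compare the two operators on $\SSYT(\lambda,n)$.

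First, Lemma~\ref{lem:theta-is-evac} says that on a rectangle the image of $q_{n-1}$ acts as $\theta$, rotation by $180^\circ$ composed with complementation. This in turn rests on Lemma~\ref{lem:q-is-evac}, which identifies $q_{n-1}$ with $\ev_\lambda$, together with the standard fact that $\ev_\lambda=\theta$ for rectangles. Substituting, the left-hand side acts on $\SSYT(\lambda,n)$ as $\theta\,t_i\,\theta$. Lemma~\ref{lem:theta-conj} then gives $\theta t_i\theta=t_{n-i}$ as operators on rectangular tableaux.

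Since $\lambda$ was arbitrary, $q_{n-1}t_iq_{n-1}$ and $t_{n-i}$ agree on $\bigsqcup_{k,m}\SSYT((m^k),n)$. Hence they are equal in $\BKR_n$.

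The composition convention causes no difficulty: $q_{n-1}$ appears on both ends and is an involution, so the order of composition in $\theta t_i\theta$ does not matter. The only substantive input is Lemma~\ref{lem:theta-is-evac}. If one wanted to verify it rather than quote it, the natural route is \eqref{eq:rc}. The reading word of $\theta(T)$ is exactly $\mathrm{rc}(w(T))$ read row by row, and a rectangular tableau's reading word inserts to the tableau itself. That step is the main point to check, but it is already granted in the paper, so the theorem follows directly.
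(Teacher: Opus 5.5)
Your proof is correct and is exactly the paper's argument: the theorem is deduced immediately from Lemma~\ref{lem:theta-is-evac} (on rectangles $q_{n-1}$ acts as $\theta$) together with Lemma~\ref{lem:theta-conj} ($\theta t_i\theta=t_{n-i}$). Your optional check of Lemma~\ref{lem:theta-is-evac} via \eqref{eq:rc}, using $w(\theta(T))=\mathrm{rc}(w(T))$ and $P(w(S))=S$, is also sound.
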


\begin{proof}
This follows immediately from Lemmas~\ref{lem:theta-is-evac}
and~\ref{lem:theta-conj}.
\end{proof}

\begin{example}
Let $n=4$, $\lambda=(2,2)$, and
\[
T=\begin{ytableau}1&1\\2&4\end{ytableau}.
\]
Then
\[
T \xmapsto{\,q_3\,}
\begin{ytableau}1&3\\4&4\end{ytableau}
\xmapsto{\,t_1\,}
\begin{ytableau}2&3\\4&4\end{ytableau}
\xmapsto{\,q_3\,}
\begin{ytableau}1&1\\2&3\end{ytableau}
=t_3T,
\]
illustrating $q_3t_1q_3=t_3$.
\end{example}

\subsection{Equivalent forms}

Define
\[
q^0_{[i,j]}
=
\alpha\bigl(q_{[n+1-j,\,n+1-i]}\bigr),
\]
where $\alpha$ acts on words by $t_a\mapsto t_{n-a}$.

\begin{example}
For $n=4$,
\[
\begin{aligned}
q^0_{[3,4]}
  &=\alpha(q_{[1,2]})
   =\alpha(t_1)
   =t_3,\\
q^0_{[2,4]}
  &=\alpha(q_{[1,3]})
   =\alpha(t_1t_2t_1)
   =t_3t_2t_3,\\
q^0_{[1,4]}
  &=\alpha(q_{[1,4]})
   =\alpha(t_1t_2t_1t_3t_2t_1)
   =t_3t_2t_3t_1t_2t_3.
\end{aligned}
\]
\end{example}

\begin{proposition}\label{prop:first-kind-equiv}
In any quotient of $\Cn_n$, the following are equivalent:
\begin{enumerate}
\item[(a)] $q_{[i,j]}=q^0_{[i,j]}$ for all $1\le i<j\le n$;
\item[(b)] $q_{[j,n]}=q^0_{[j,n]}$ for all $1\le j<n$;
\item[(c)] $q\,t_i\,q=t_{n-i}$ for all $1\le i\le n-1$.
\end{enumerate}
\end{proposition}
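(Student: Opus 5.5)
The plan is to prove the cycle (a)$\Rightarrow$(b)$\Rightarrow$(c)$\Rightarrow$(a). Throughout, we work in a fixed quotient $G$ of $\Cn_n$ and use only three ingredients: the cactus relations (C1)--(C3), identities of words that already hold in the commutation group $\Gam_n$, and the fact that $\alpha$ is an automorphism of $\Gam_n$. The implication (a)$\Rightarrow$(b) is trivial, since (b) is a special case of (a).

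For (c)$\Rightarrow$(a), note that (c) says that the inner automorphism $x\mapsto qxq$ of $G$ agrees with $\alpha$ on the generators $t_i$. Hence for every word $w$ in the $t_a$, the image of $\alpha(w)$ in $G$ equals $q\,t_w\,q$. Applying this to the word $q_{[n+1-j,n+1-i]}$ and then using (C3) with $[k,l]=[1,n]$ and $q=q_{[1,n]}$, we get
\[
q^0_{[i,j]}=q\,q_{[n+1-j,\,n+1-i]}\,q=q_{[i,j]},
\]
which is (a).

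The step (b)$\Rightarrow$(c) is the main obstacle. The difficulty is that $\alpha$ does not descend to $\Cn_n$, so we cannot simply apply $\alpha$ to the cactus-group formulas for $t_i$ in Theorem~\ref{thm:CGP}. The plan is instead to express $t_i$ through the $q_k$ by an identity valid already in $\Gam_n$. Since $q_k=q_{k-1}p_k$ as words, we have $p_k=q_{k-1}^{-1}q_k$ in $\Gam_n$, and since $p_k=t_kp_{k-1}$, we get
\[
t_i=p_i\,p_{i-1}^{-1}=q_{i-1}^{-1}\,q_i\,q_{i-1}^{-1}\,q_{i-2}
\qquad\text{in }\Gam_n,
\]
with the conventions $q_0=1$ and $p_0=1$; the small cases $i=1,2$ should be checked directly. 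Applying the automorphism $\alpha$ of $\Gam_n$ gives
\[
t_{n-i}=\alpha(q_{i-1})^{-1}\,\alpha(q_i)\,\alpha(q_{i-1})^{-1}\,\alpha(q_{i-2})
\qquad\text{in }\Gam_n,
\]
hence in $G$.

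Now $\alpha(q_k)=\alpha(q_{[1,k+1]})=q^0_{[n-k,n]}$. By (b) this equals $q_{[n-k,n]}$, which by (C3) equals $q\,q_{[1,k+1]}\,q=q\,q_kq$. We have to be careful that (b) is read as an equality of the images in $G$ of the words $\alpha(q_{[1,k+1]})$ and $q_{[n-k,n]}$. Substituting these, and using (C1) to replace inverses by the elements themselves, we obtain $t_{n-i}=q\,\bigl(q_{i-1}q_iq_{i-1}q_{i-2}\bigr)\,q$. The bracketed product equals $t_i$ in $G$: either by the same $\Gam_n$ identity together with $q_k^{-1}=q_k$ in $G$, or directly by Theorem~\ref{thm:CGP}. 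This gives $t_{n-i}=q\,t_i\,q$, which is (c).
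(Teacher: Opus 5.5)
Your proof is correct and follows the paper's argument. The step (a)$\Rightarrow$(b) is trivial, (c)$\Rightarrow$(a) is the same conjugation-plus-(C3) step, and for (b)$\Rightarrow$(c) your closed formula $t_i=q_{i-1}^{-1}q_iq_{i-1}^{-1}q_{i-2}$ is the paper's two-stage telescoping written in one line: the paper first gets $q\,p_k\,q=\alpha(p_k)$ from consecutive $k$, then $q\,t_k\,q=t_{n-k}$. Your care about inverses in $\Gam_n$ versus the quotient, and about reading (b) as an identity of images of words, is appropriate bookkeeping rather than a different route.
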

\begin{proof}
The implication (a)$\Rightarrow$(b) is immediate.

For (b)$\Rightarrow$(c), we have
$q_{[j,n]}=q\,q_{n-j}\,q$ and
$q^0_{[j,n]}=\alpha(q_{n-j})$, so (b) is equivalent to
\begin{equation}\label{eq:qk}
q\,q_k\,q=\alpha(q_k),\qquad 1\le k\le n-1.
\end{equation}
Since $q_k=q_{k-1}p_k$, comparing \eqref{eq:qk} for $k$ and $k-1$ gives
$q\,p_k\,q=\alpha(p_k)$. As $p_k=t_k\cdots t_1$, this becomes
\[
q\,t_k\cdots t_1\,q=t_{n-k}\cdots t_{n-1}.
\]
Comparing with the same identity for $k-1$ gives
$q\,t_k\,q=t_{n-k}$.

For (c)$\Rightarrow$(a), conjugation by $q$ acts on every word by $\alpha$. Hence
\[
q^0_{[i,j]}
=\alpha\bigl(q_{[n+1-j,n+1-i]}\bigr)
=q\,q_{[n+1-j,n+1-i]}\,q
=q_{[i,j]},
\]
where the last equality is the cactus relation (C3), using $q=q_{[1,n]}$.
\end{proof}
\begin{corollary}\label{cor:thm22}
In $\BKR_n$,
\[
q_{[i,j]}=q^0_{[i,j]}
\]
for all $1\le i<j\le n$.
\end{corollary}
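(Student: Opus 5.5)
The plan is to combine Theorem~\ref{thm:first-kind} with Proposition~\ref{prop:first-kind-equiv}. First, $\BKR_n$ is a quotient of $\Cn_n$: by Theorem~\ref{thm:CGP} we have a surjection $\Cn_n\twoheadrightarrow\BK_n$, and composing with the restriction map $\BK_n\twoheadrightarrow\BKR_n$ gives $\Cn_n\twoheadrightarrow\BKR_n$. This composite sends the cactus generators $q_{[i,j]}$ and the elements $t_i$ to the Bender--Knuth operators with the same names. Consequently the cactus relations (C1)--(C3) hold in $\BKR_n$, and so the hypotheses of Proposition~\ref{prop:first-kind-equiv} are satisfied in this quotient.

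Second, Theorem~\ref{thm:first-kind} gives condition (c) of Proposition~\ref{prop:first-kind-equiv} in $\BKR_n$, namely $q\,t_i\,q=t_{n-i}$ for $1\le i\le n-1$. The implication (c)$\Rightarrow$(a) then yields $q_{[i,j]}=q^0_{[i,j]}$ for all $1\le i<j\le n$, which is the claim.

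The only point requiring care is the meaning of $q^0_{[i,j]}$. It is defined as the word $\alpha\bigl(q_{[n+1-j,n+1-i]}\bigr)$, and $\alpha$ is only an automorphism of $\Gam_n$, not of $\Cn_n$ or $\BKR_n$. So $q^0_{[i,j]}$ must be read as the image in $\BKR_n$ of a specific word in the $t_a$. The proof of (c)$\Rightarrow$(a) is consistent with this reading. Relation (c) implies that conjugation by $q$ carries any word $t_{a_1}\cdots t_{a_N}$ to $t_{n-a_1}\cdots t_{n-a_N}$ in the quotient. In particular it carries the word for $q_{[n+1-j,n+1-i]}$ to the word $q^0_{[i,j]}$, and (C3) with $q=q_{[1,n]}$ identifies this conjugate with $q_{[i,j]}$. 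No step is a genuine obstacle.
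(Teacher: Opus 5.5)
Your proof is correct and is the paper's route: you realize $\BKR_n$ as a quotient of $\Cn_n$ via $\Cn_n\twoheadrightarrow\BK_n\twoheadrightarrow\BKR_n$, Theorem~\ref{thm:first-kind} supplies condition (c) there, and the implication (c)$\Rightarrow$(a) of Proposition~\ref{prop:first-kind-equiv} gives the claim. Your remark that $q^0_{[i,j]}$ must be read as the image of a specific word in the $t_a$, because $\alpha$ is defined only on $\Gam_n$, is also how the paper treats it.
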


\begin{definition}\label{def:RC}
The \emph{rectangular cactus group} is the quotient
\[
\RC_n
=
\Cn_n\Big/
\Big\langle\!\Big\langle
q_{[1,n]}t_iq_{[1,n]}t_{n-i},
\ 1\le i\le n-1
\Big\rangle\!\Big\rangle.
\]
Equivalently, $\RC_n$ is the quotient of $\Cn_n$ by the relations
$q_{[i,j]}=q^0_{[i,j]}$.
\end{definition}

By Corollary~\ref{cor:thm22}, the natural map to the rectangular Berenstein--Kirillov group
factors as
\[
\Cn_n\twoheadrightarrow\RC_n\twoheadrightarrow\BKR_n.
\]

\subsection{The groups \texorpdfstring{$\RC_n$}{RCn} and \texorpdfstring{$\BKR_n$}{BKRn}}
\label{sec:groups}

By construction, there is a natural surjection
\[
\RC_n\twoheadrightarrow\BKR_n.
\]
One might hope that it is an isomorphism. Already for $n=4$, this is false.

\begin{theorem}\label{thm:rc4-not-bkr4}
The map
$
\RC_4\twoheadrightarrow\BKR_4
$
is not an isomorphism.
\end{theorem}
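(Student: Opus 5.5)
To prove that $\RC_4\twoheadrightarrow\BKR_4$ is not injective, we will exhibit a word in $t_1,t_2,t_3$ that acts trivially on every rectangular tableau with entries in $[4]$ but is nontrivial in $\RC_4$. The natural candidate is a relation that holds in $\BK_n$ for all shapes but is not a consequence of the cactus relations. The obvious one is Lemma~\ref{lem:t1t2}, $(t_1t_2)^6=1$.

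The plan is therefore to show that $(t_1t_2)^6\neq 1$ in $\RC_4$. Since it holds in $\BK_4$, and hence in $\BKR_4$, this gives a nontrivial kernel element. To certify nontriviality we will construct a homomorphism from $\RC_4$ to a finite group, or to a group where orders are easy to read off, in which the image of $t_1t_2$ does not have order dividing $6$.

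Concretely, we will assign involutions $\tau_1,\tau_2,\tau_3$ in some group $G$ and check the following conditions, which present $\Cn_4$ in the $t_i$ generators by Theorem~\ref{thm:CGP} and impose the Dynkin-reversal relations of Definition~\ref{def:RC}:
\begin{itemize}
\item[(i)] the Coxeter-type relations $\tau_i^2=1$ and $\tau_1\tau_3=\tau_3\tau_1$;
\item[(ii)] the relations $(t_iq_{[j,k]})^2=1$ for $i+1<j<k\le4$, whose only instance is $(t_1q_{[3,4]})^2=1$ with $q_{[3,4]}=q_3q_1q_3$;
\item[(iii)] with $Q=\tau_1\tau_2\tau_1\tau_3\tau_2\tau_1$, the relations $Q\tau_iQ=\tau_{4-i}$ for $i=1,2,3$.
\end{itemize}
Then we will verify that $(\tau_1\tau_2)^6\neq1$.

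A convenient search space is small permutation groups or signed permutation matrices. A cleaner alternative is an infinite target: by Theorem~\ref{thm:big-cactus}, $\RC_4\cong J'_4=\pi_1^{\Sym_4}(\overline M_{0,4}(\mathbb R))$. Here $\overline M_{0,4}(\mathbb R)$ is a circle, so $J'_4$ is an extension of $\Sym_4$ by a free-ish group, in fact virtually $\mathbb Z$. We expect that $t_1t_2$ maps to an element of infinite order in such a description, perhaps a translation composed with a $3$-cycle, which would show that $(t_1t_2)^6$ is nontrivial.

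However, Section~\ref{sec:big-cactus} comes later, so the self-contained route is preferable: find explicit $\tau_i$ by computer search, for example in $\Sym_m$ for small $m$ or in the infinite dihedral group times $\Sym_4$, and then simply verify (i)–(iii) and the order of $\tau_1\tau_2$.

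The main obstacle is finding a target group that satisfies the Dynkin-reversal relations (iii) while keeping $\tau_1\tau_2$ of large order, since (iii) together with the cactus relation is quite restrictive. As a first attempt, we will try the affine action on $\mathbb Z$ suggested by the circle $\overline M_{0,4}(\mathbb R)$. In this model each $t_i$ acts as a reflection of $\mathbb Z$ composed with a permutation in $\Sym_4$ recording the image in $\Sym_4$; we use the map $t_i\mapsto s_i$ in some form, with $q\mapsto w_0$. We will then adjust the translation parts so that (ii) and (iii) hold while $t_1t_2$ acquires a nonzero translation part. If that fails, a brute-force search over permutation representations of degree at most about $8$ should produce a witness.
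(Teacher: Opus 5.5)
Your strategy is the paper's: combine $(t_1t_2)^6=1$ (Lemma~\ref{lem:t1t2}) with a homomorphism out of $\RC_4$ in which $t_1t_2$ has order not dividing $6$. Your list (i)--(iii) of relations to verify is correct. However, the proposal never produces the homomorphism. You defer the witness to a computer search and leave the Dynkin-reversal relations as an unresolved obstacle. Exhibiting and checking such a map is the entire content of the proof, so as written nothing has yet been proved.

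The missing idea is to take $\tau_1=\tau_3$. Then $\tau_1\tau_3=1$, so $Q=\tau_1\tau_2(\tau_1\tau_3)\tau_2\tau_1=1$ for \emph{any} involutions $\tau_1,\tau_2$. Relation (iii) then reads $\tau_i=\tau_{4-i}$, which holds by construction. Also $q_{[3,4]}=q_3q_1q_3\mapsto\tau_1$, so (ii) becomes $\tau_1^4=1$. Hence $t_1,t_3\mapsto x$, $t_2\mapsto y$ defines a map from $\RC_4$ to any group generated by two involutions $x,y$.

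The paper uses the universal such group, $D_\infty=\langle x,y\mid x^2=y^2=1\rangle$, in which $xy$ has infinite order. A finite dihedral group in which $xy$ has order $4$ would serve equally well, so your proposed small permutation-group search would have succeeded at once.

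This choice is also what your circle model predicts. The double transposition $(1\,2)(3\,4)$ acts trivially on $\overline M_{0,4}(\mathbb R)$, and $t_1,t_3$ are both realized by constant paths at the caterpillar point. So they lift to the same reflection of the universal cover, and only $t_2$ gives a different reflection.
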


\begin{proof}
Let
$
D_\infty=\langle x,y\mid x^2=y^2=1\rangle
$
be the infinite dihedral group. Define
\[
t_1,t_3\longmapsto x,
\qquad
t_2\longmapsto y.
\]
We claim that this defines a homomorphism $\RC_4\to D_\infty$.

The involution and commutation relations are immediate. Also
$
q_3=t_1t_2t_1t_3t_2t_1
$
maps to
$
xyxxyx=1.
$
Therefore the Dynkin-reversal relations $q_3t_iq_3=t_{4-i}$ are satisfied, since $t_1$ and $t_3$
have the same image. The only remaining cactus relation is
$(t_1q_{[3,4]})^2=1$. Since
$
q_{[3,4]}=q_3q_1q_3
$
maps to $x$, this relation is also satisfied.

Thus $t_1t_2$ maps to $xy$, which has infinite order in $D_\infty$, so $t_1t_2$ has infinite
order in $\RC_4$. On the other hand,
$
(t_1t_2)^6=1
$
in $\BKR_4$ by Lemma~\ref{lem:t1t2}. Therefore
$\RC_4\twoheadrightarrow\BKR_4$ has nontrivial kernel.
\end{proof}

Note that this failure comes from relations that already hold in $\BK_n$ but not in the
cactus group. This suggests asking instead whether the Dynkin-reversal relations account for all
relations introduced by restricting from arbitrary shapes to rectangles.

\begin{conjecture}\label{conj:relative}
The rectangular Berenstein--Kirillov group is obtained from $\BK_n$ by imposing the
Dynkin-reversal relations:
\[
\BKR_n
\cong
\BK_n\Big/
\Big\langle\!\Big\langle
q_{n-1}t_iq_{n-1}t_{n-i}
:\ 1\le i\le n-1
\Big\rangle\!\Big\rangle .
\]
\end{conjecture}

In Section~\ref{sec:big-cactus} we identify \(\RC_n\) with Henriques's big cactus group \(J'_n\) (Theorem~\ref{thm:big-cactus}). Conjecture~\ref{conj:relative} then says that \(\BKR_n\) is obtained from \(\BK_n\) by imposing the same additional relations that turn \(\Cn_n\) into \(J'_n\): restricting from arbitrary tableaux to rectangles should introduce exactly the relations introduced by forgetting the root.

\section{Knitting words}\label{sec:knitting}

\subsection{The strip, Dynkin paths, and flips}

Let
\[
\mathcal S=\{(i,j)\in\ZZ^2:1\le j\le n-1,\ i+j\text{ even}\}.
\]
A \emph{Dynkin path} is a sequence
\[
D=((d_1,1),\dots,(d_{n-1},n-1))
\]
with $|d_{j+1}-d_j|=1$ for all $j$. We usually identify $D$ with its height vector
$(d_1,\dots,d_{n-1})$.

A Dynkin path determines an orientation $\Omega_D$ of the Dynkin diagram $A_{n-1}$:
orient
\[
j\longrightarrow j+1
\quad\text{if } d_{j+1}=d_j+1,
\]
and $j+1\to j$ otherwise. Every orientation arises in this way, uniquely up to horizontal
translation of the path.

\begin{figure}[ht]
\centering
\begin{tikzpicture}[scale=0.72,>=stealth]
\foreach \j in {1,...,4} {
  \foreach \i in {-1,...,7} {
    \pgfmathparse{int(mod(\i+\j+100,2))}
    \ifnum\pgfmathresult=0 \fill[gray!35] (\i,\j) circle (2.2pt); \fi
  }
}
\foreach \j in {1,...,4} {\node at (-2.4,\j) {$j=\j$};}
\foreach \i in {-1,1,3,5,7} {\node[gray] at (\i,0.35) {\scriptsize $\i$};}
\node[gray] at (8.2,0.35) {\scriptsize $i$};
\draw[very thick,blue] (3,1)--(2,2)--(3,3)--(4,4);
\foreach \p/\j/\d in {3/1/3, 2/2/2, 3/3/3, 4/4/4} {
  \fill[blue] (\p,\j) circle (3pt);
  \node[blue,anchor=east] at (\p-0.25,\j) {\small $d_{\j}=\d$};
}
\node[blue] at (3,4.7) {$D$};
\begin{scope}[xshift=10.6cm]
  \foreach \j in {1,...,4} {\fill[black] (0,\j) circle (3.2pt); \node[anchor=west] at (0.25,\j) {$\j$};}
  \draw[very thick,->] (0,1.85)--(0,1.15);
  \draw[very thick,->] (0,2.15)--(0,2.85);
  \draw[very thick,->] (0,3.15)--(0,3.85);
  \node at (0,4.7) {$\Omega_D$};
\end{scope}
\end{tikzpicture}
\caption{The strip $\mathcal S$ for $n=5$ and the Dynkin path
$D=(3,2,3,4)$ (blue). The corresponding orientation of $A_4$ is
\[
\Omega_D:\qquad 1\leftarrow2\to3\to4.
\]
Thus $2$ is a source and $1,4$ are sinks.}
\label{fig:dynkin-path}
\end{figure}

A vertex $j$ is a \emph{source} of $D$ if
$
d_{j'}=d_j+1
$
for every neighbour $j'\in\{j-1,j+1\}\cap[1,n-1]$. Equivalently, $j$ is a source of
$\Omega_D$.
The \emph{flip} at a source $j$ replaces $d_j$ by $d_j+2$ and leaves all other heights unchanged.
On orientations, this is reflection at the source $j$.

In coordinates, the flip is the local move
\[
(i,j),(i-1,j+1),(i,j+2)
\longmapsto
(i,j),(i+1,j+1),(i,j+2),
\]
with the obvious boundary versions at $j=1$ and $j=n-1$.

A \emph{knitting word} from $D$ to $D'$ is a sequence
$w=(j_1,\dots,j_N)$ such that successive flips at the sources
$j_1,\dots,j_N$ carry $D$ to $D'$. We write $W(D,D')$ for the set of all knitting words from
$D$ to $D'$.

\begin{figure}[ht]
\centering
\begin{tikzpicture}[scale=0.5,>=stealth]
\newcommand{\strip}{%
  \foreach \j in {1,...,4} {\foreach \i in {0,...,7} {
    \pgfmathparse{int(mod(\i+\j+100,2))}
    \ifnum\pgfmathresult=0 \fill[gray!35] (\i,\j) circle (2.4pt); \fi }}}
\newcommand{\levels}{\foreach \j in {1,...,4} {\node at (-1.9,\j) {\small $j=\j$};}}
\newcommand{\dynkin}{\foreach \j in {1,...,4} {\fill (9,\j) circle (3pt); \node[anchor=west] at (9.3,\j) {\small $\j$};}}
\newcommand{\up}[1]{\draw[very thick,->] (9,#1+0.15)--(9,#1+0.85);}     
\newcommand{\down}[1]{\draw[very thick,->] (9,#1+0.85)--(9,#1+0.15);}   

\begin{scope}
  \strip \levels \dynkin
  \draw[very thick,blue] (3,1)--(2,2)--(3,3)--(4,4);
  \foreach \p/\j in {3/1,2/2,3/3,4/4} {\fill[blue] (\p,\j) circle (3.2pt);}
  \draw[red,very thick] (2,2) circle (7pt);                 
  \draw[red,thick,dashed,->] (2.45,2.25) .. controls (3,2.7) .. (3.55,2.25);
  \draw[red,thick] (4,2) circle (3.2pt);                    
  \node[blue] at (3.5,5.1) {$D=(3,2,3,4)$};
  \up{2} \up{3} \down{1}
  \node at (9,5.1) {$\Omega_D$};
\end{scope}
\draw[very thick,->] (11.3,2.5)--(12.7,2.5) node[midway,above] {\small flip $2$};

\begin{scope}[xshift=14cm]
  \strip \dynkin
  \draw[very thick,blue] (3,1)--(4,2)--(3,3)--(4,4);
  \foreach \p/\j in {3/1,4/2,3/3,4/4} {\fill[blue] (\p,\j) circle (3.2pt);}
  \draw[red,very thick] (3,3) circle (7pt);
  \draw[red,thick,dashed,->] (3.45,3.25) .. controls (4,3.7) .. (4.55,3.25);
  \draw[red,thick] (5,3) circle (3.2pt);
  \node[blue] at (3.5,5.1) {$D'=(3,4,3,4)$};
  \up{1} \down{2} \up{3}
  \node at (9,5.1) {$\Omega_{D'}$};
\end{scope}
\draw[very thick,->] (17.5,0.2)--(17.5,-1.4) node[midway,right] {\small flip $3$};

\begin{scope}[xshift=14cm,yshift=-6.8cm]
  \strip \dynkin
  \draw[very thick,blue] (3,1)--(4,2)--(5,3)--(4,4);
  \foreach \p/\j in {3/1,4/2,5/3,4/4} {\fill[blue] (\p,\j) circle (3.2pt);}
  \draw[red,very thick] (3,1) circle (7pt);
  \draw[red,thick,dashed,->] (3.45,1.25) .. controls (4,1.7) .. (4.55,1.25);
  \draw[red,thick] (5,1) circle (3.2pt);
  \node[blue] at (3.5,5.1) {$D''=(3,4,5,4)$};
  \up{1} \up{2} \down{3}
  \node at (9,5.1) {$\Omega_{D''}$};
\end{scope}
\draw[very thick,->] (12.7,-4.3)--(11.3,-4.3) node[midway,above] {\small flip $1$};

\begin{scope}[yshift=-6.8cm]
  \strip \levels \dynkin
  \draw[very thick,blue] (5,1)--(4,2)--(5,3)--(4,4);
  \foreach \p/\j in {5/1,4/2,5/3,4/4} {\fill[blue] (\p,\j) circle (3.2pt);}
  \node[blue] at (3.5,5.1) {$D'''=(5,4,5,4)$};
  \down{1} \up{2} \down{3}
  \node at (9,5.1) {$\Omega_{D'''}$};
\end{scope}
\end{tikzpicture}
\caption{A knitting word $w=(2,3,1)\in W(D,D''')$ for $n=5$. In each panel the source to be
flipped is circled, and the corresponding orientation of $A_4$ is shown on the right. Each flip
raises one height by $2$ and reverses the arrows incident to that vertex.}
\label{fig:knitting}
\end{figure}

\begin{lemma}\label{lem:sources-exist}
Every Dynkin path has a source. Moreover, any Dynkin path can be transformed by flips into a path
with any prescribed orientation. In particular, every orientation can be reached from the linear
path
\[
D_{\mathrm{lin}}=(1,2,\dots,n-1).
\]
\end{lemma}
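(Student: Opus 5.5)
The first claim follows from a minimum argument. Let $D=(d_1,\dots,d_{n-1})$ be a Dynkin path and let $j$ be an index at which $d_j$ is minimal; if several indices attain the minimum, any of them will do. Every neighbour $j'\in\{j-1,j+1\}\cap[1,n-1]$ satisfies $|d_{j'}-d_j|=1$ and $d_{j'}\ge d_j$, so $d_{j'}=d_j+1$. Hence $j$ is a source. When $n=2$ the path has a single vertex with no neighbours, and that vertex is a source vacuously.

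For the second claim, fix a target orientation $\Omega$ and choose a Dynkin path $E=(e_1,\dots,e_{n-1})$ with $\Omega_E=\Omega$. Because horizontal translation by an even amount preserves both the orientation and membership in $\mathcal S$, we may shift $E$ so that $e_j\ge d_j$ for all $j$. We may also arrange $e_j\equiv d_j \pmod 2$: both paths satisfy $d_j\equiv j+d_1$, so it is enough to have $e_1\equiv d_1$, and this holds after translating $E$ by an even amount once both lie in $\mathcal S$. (If one allows a general translate, first translate by $0$ or $1$ to fix the parity.)

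We then flip greedily. Let $\Phi(D)=\sum_j (e_j-d_j)/2\ge 0$, and suppose $D\ne E$. Consider the set $U=\{j: d_j<e_j\}$, and within $U$ choose $j$ minimizing $d_j$. We claim $j$ is a source of $D$. Let $j'$ be a neighbour of $j$. If $j'\in U$, then minimality gives $d_{j'}\ge d_j$, so $d_{j'}=d_j+1$. If $j'\notin U$, then $d_{j'}=e_{j'}$. In this case, if $d_{j'}=d_j-1$ we would get $e_j\ge d_j+2=e_{j'}+3$, which contradicts $|e_j-e_{j'}|=1$. So again $d_{j'}=d_j+1$. Flipping at $j$ raises $d_j$ by $2$ and keeps $d_j\le e_j$ by parity, so $\Phi$ decreases by one. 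Iterating, we reach $E$ after $\Phi(D)$ flips, and $E$ has orientation $\Omega$.

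The final sentence of the lemma is then the special case $D=D_{\mathrm{lin}}$.

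The only delicate step is the neighbour check for $j'\notin U$. It uses that $E$ itself is a Dynkin path, together with the parity bookkeeping that ensures a flip never overshoots $E$. Everything else in the argument is routine.
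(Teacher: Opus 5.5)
Your proof is correct. The first part (a vertex of minimal height is a source) is the same as the paper's argument, but for the second part you take a genuinely different route.

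The paper works only with the orientation, encoded by the sign vector $\varepsilon_j=d_{j+1}-d_j$. A flip at an interior source moves a $-1$ one step to the right. A flip at $j=1$ creates a $-1$ at the left end, and a flip at $j=n-1$ deletes one at the right end. The paper then reduces any sign vector to $(+1,\dots,+1)$ by pushing the rightmost $-1$ off the end. Conversely, it builds any target from $(+1,\dots,+1)$ by creating $-1$'s on the left and pushing them into place, rightmost first.

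You instead fix an actual target path $E\ge D$ and run a monotone greedy algorithm on heights, always flipping the lowest vertex still below $E$. This proves more: every path $E$ weakly to the right of $D$ is reachable from $D$ in exactly $\sum_j(e_j-d_j)/2$ flips, i.e.\ $W(D,E)\neq\emptyset$. That anticipates Lemma~\ref{lem:knitting-words}(b) and gives nonemptiness of $\mathcal K(D)=W(D,\bar D)$ directly, without passing through $D_{\mathrm{lin}}$ and the Flip Lemma. Your neighbour check for $j'\notin U$ is the same observation that makes a minimal node of $R(D,E)$ crossable in Lemma~\ref{lem:knitting-words}(a). The paper's particle picture is more explicit about which flips to perform between orientations, but it does not control which translate one lands on.

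One small slip: a path in $\mathcal S$ satisfies $d_j\equiv d_1+j-1\equiv j\pmod 2$, not $d_j\equiv j+d_1$. Since $D$ and $E$ both lie in $\mathcal S$, the condition $e_j\equiv d_j$ holds automatically, so your argument is unaffected.
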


\begin{proof}
A vertex of minimal height is a source. To prove the second statement, encode the orientation by
$\varepsilon_j=d_{j+1}-d_j\in\{\pm1\}$. A flip at an interior source moves a $-1$ one step to the
right, while flips at the two ends create a $-1$ on the left or delete one on the right. Thus any
sign vector can be transformed into $(+1,\ldots,+1)$ by repeatedly moving its rightmost $-1$ to
the right and deleting it at the end. Conversely, starting from $(+1,\ldots,+1)$, we can obtain
any prescribed sign vector by creating $-1$'s at the left end and moving them to their desired
positions, placing the rightmost ones first. Thus
every orientation can be reached from every other one.
\end{proof}

\subsection{Knitting words define elements of \texorpdfstring{$\Gam_n$}{Gamma_n}}

We now show that a knitting word from $D$ to $D'$ determines an element of $\Gam_n$
independent of the choice of flips. Assume $n\ge3$.

Say that $D'$ lies weakly to the right of $D$, and write $D\le D'$, if
$d'_j\ge d_j$ for all $j$. Let
\[
R(D,D')=\{(i,j)\in\mathcal S:d_j\le i<d'_j\}.
\]
Order $R(D,D')$ by
\[
(i-1,j\pm1)\lessdot(i,j)
\]
whenever both nodes lie in the region.

\begin{lemma}\label{lem:knitting-words}
Let $D'$ lie weakly to the right of $D$.
\begin{enumerate}
\item[\textup{(a)}] Knitting sequences from $D$ to $D'$ are in bijection with the linear
extensions of $R(D,D')$, with the knitting word given by the levels of the nodes.
\item[\textup{(b)}] Every knitting word contains the letter $j$ exactly
$(d'_j-d_j)/2$ times. In particular, $|w|=|R(D,D')|$.
\item[\textup{(c)}] Any two knitting words are related by commutations of adjacent letters
$j,j'$ with $|j-j'|>1$. Hence they define the same element
\[
Q(D,D')\in\Gam_n,
\]
and therefore the same elements of $\Cn_n$, $\BK_n$, and $\BKR_n$.
\end{enumerate}
\end{lemma}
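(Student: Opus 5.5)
The plan is to show that the set of paths $D''$ with $D\le D''\le D'$ that are reachable from $D$ by flips corresponds exactly to the order ideals of the poset $R(D,D')$. Parts (a)--(c) will then follow from standard facts about linear extensions.

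\emph{Setup.} For an intermediate path $D''$ I associate the set
\[
I(D'')=R(D,D'')=\{(i,j)\in\mathcal S: d_j\le i<d''_j\}.
\]
A flip at a source $j$ of $D''$ raises $d''_j$ by $2$. It therefore adds exactly one node, $(d''_j,j)$, to $I$; this node lies in $\mathcal S$ because $d''_j\equiv j$ mod $2$.

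\emph{Key step: flips are exactly the ways of adding a minimal node.} I first check that $j$ is a source of $D''$ if and only if every lower cover $(d''_j-1,j\pm1)$ of $(d''_j,j)$ that lies in $R(D,D')$ already lies in $I(D'')$. Since $|d''_{j\pm1}-d''_j|=1$, we have $(d''_j-1,j\pm1)\in I(D'')$ exactly when $d''_{j\pm1}=d''_j+1$, which is the source condition. The delicate case is a neighbour node $(d''_j-1,j')$ that does not lie in $R(D,D')$. It could fail to lie there because it is below $D$, that is, $d''_j-1<d_{j'}$. Combined with $|d_{j'}-d_j|=1$ and $d_j\le d''_j$, this forces $d_{j'}=d_j+1$, $d''_j=d_j$, and $d''_{j'}=d_{j'}=d''_j+1$; the vertex then already satisfies the source condition at $j'$. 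It could instead fail to lie there because it is at or above $D'$, that is, $d''_j-1\ge d'_{j'}$. Then $d''_{j'}\le d'_{j'}\le d''_j-1$, so $j$ cannot be flipped while staying below $D'$, and the node $(d''_j,j)$ is not in $R(D,D')$ anyway. I must also handle the boundary levels $j=1$ and $j=n-1$, which have only one neighbour. By induction on the number of flips, this shows that the sets $I(D'')$ obtained along a knitting sequence are order ideals. It also shows that each step adds an element minimal in the complement of the current ideal. Conversely, adding any such minimal element $(i,j)$ is a legal flip with $i=d''_j$.

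This yields (a): a knitting sequence from $D$ to $D'$ is the same thing as a chain of order ideals from $\emptyset$ to $R(D,D')$, that is, a linear extension. The flip at level $j$ records the letter $j$. Part (b) follows by counting the nodes at level $j$: these are the $i\equiv j$ with $d_j\le i<d'_j$, so there are $(d'_j-d_j)/2$ of them. For (c), any two linear extensions of a finite poset are connected by swaps of adjacent incomparable elements. If two consecutive nodes are incomparable and lie at levels $j,j'$ with $|j-j'|\le1$, I must rule that out. Nodes on the same level $j$ are comparable, since they form a chain through level $j\pm1$ inside the region; this uses $n\ge3$ and the convexity of $R$ between two paths. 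For nodes on adjacent levels that are consecutive in a linear extension, I argue that neither ideal-extension step is possible without the other being a cover, so the nodes must be comparable. Hence every swap is a commutation $t_jt_{j'}=t_{j'}t_j$ with $|j-j'|>1$, and $Q(D,D')\in\Gam_n$ is well defined.

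The main obstacle is the boundary bookkeeping in the key step, together with the claim in (c) that consecutive incomparable nodes lie on non-adjacent levels. For the latter, I will use that in an order ideal difference of size two, adjacent-level nodes $(i,j)$ and $(i',j\pm1)$ with $|i-i'|=1$ are comparable by definition of the cover relation. The remaining cases with $|i-i'|\ge3$ cannot both be addable, because of the path constraint $|d''_{j+1}-d''_j|=1$.
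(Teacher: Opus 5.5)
Your proposal is correct and is essentially the paper's argument. A flip at $j$ crosses the node $(d''_j,j)$, and it is legal exactly when that node's lower covers in $R(D,D')$ have already been crossed, so knitting sequences are linear extensions; (c) then follows because nodes that can be swapped never lie on the same or adjacent levels. The one small variation is at adjacent levels: you argue that two consecutive incomparable nodes are simultaneously addable, hence sit at two sources of one path, which is impossible for adjacent levels. The paper instead asserts directly that any two nodes of $R(D,D')$ on the same or adjacent levels are comparable. Your boundary bookkeeping fills in details the paper leaves implicit.
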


\begin{figure}[ht]
\centering
\begin{tikzpicture}[scale=0.85,>=stealth]
\foreach \j in {1,...,4} {\foreach \i in {0,...,7} {
  \pgfmathparse{int(mod(\i+\j+100,2))}
  \ifnum\pgfmathresult=0 \fill[gray!35] (\i,\j) circle (2pt); \fi }}
\foreach \j in {1,...,4} {\node at (-1.3,\j) {\small $j=\j$};}
\fill[blue!6] (3,1)--(2,2)--(3,3)--(4,4)--(5,3)--(6,2)--(5,1)--cycle;
\draw[very thick,blue] (3,1)--(2,2)--(3,3)--(4,4);
\draw[very thick,red]  (5,1)--(6,2)--(5,3)--(4,4);
\node[blue] at (1.6,1.15) {$D$};
\node[red]  at (6.5,1.15) {$D'$};
\draw[thick,->] (2.2,1.8)--(2.8,1.2);
\draw[thick,->] (2.2,2.2)--(2.8,2.8);
\draw[thick,->] (3.2,1.2)--(3.8,1.8);
\draw[thick,->] (3.2,2.8)--(3.8,2.2);
\foreach \p/\j in {2/2, 3/1, 3/3, 4/2} {\fill[black] (\p,\j) circle (3pt);}
\node[anchor=east]  at (1.75,2) {\small $(2,2)$};
\node[anchor=north] at (3,0.7)  {\small $(3,1)$};
\node[anchor=south] at (3,3.3)  {\small $(3,3)$};
\node[anchor=west]  at (4.25,2) {\small $(4,2)$};
\node[align=center] at (3,-0.9) {\small $W(D,D')=\{(2,1,3,2),\ (2,3,1,2)\}$};
\end{tikzpicture}
\caption{The region $R(D,D')$ for $D=(3,2,3,4)$ and $D'=(5,6,5,4)$. The two linear
extensions give the knitting words $(2,1,3,2)$ and $(2,3,1,2)$, differing by the
commutation of $1$ and $3$.}
\label{fig:region}
\end{figure}

\begin{proof}
A flip at level $j$ moves the path across the next node of $R(D,D')$ on that level. It is legal
exactly when the predecessors of that node have already been crossed. Thus knitting sequences are
precisely the linear extensions of $R(D,D')$, proving (a). Part (b) is immediate, since each flip
at level $j$ raises $d_j$ by $2$.

For (c), any two linear extensions differ by swaps of adjacent incomparable elements. In
$R(D,D')$, nodes on the same or adjacent levels are comparable, so incomparable nodes have levels
$j,j'$ with $|j-j'|>1$. The corresponding generators commute in $\Gam_n$.
\end{proof}

Later, we will use the composition rule
\[
Q(D,D'')=Q(D,D')Q(D',D'')
\qquad(D\le D'\le D''),
\]
which follows by concatenating knitting sequences.

\subsection{Knitting \texorpdfstring{$w_0$}{w0}-words}

\begin{definition}
For a Dynkin path $D$, let $\bar D$ be its image under
$
(i,j)\longmapsto(i+n,n-j),
$
so that
$
\bar d_j=d_{n-j}+n.
$
A \emph{knitting $w_0$-word} for $D$ is a knitting word from $D$ to $\bar D$. We write
\[
\mathcal K(D)=W(D,\bar D),
\qquad
Q_D=Q(D,\bar D)\in\Gam_n.
\]
\end{definition}

The path $\bar D$ has orientation $\alpha(\Omega_D)$ and lies weakly to the right of $D$, since
\[
\bar d_j-d_j=n+d_{n-j}-d_j>0.
\]
Horizontal translation of $D$ translates the whole picture, so $\mathcal K(D)$ and $Q_D$ depend
only on the orientation $\Omega_D$.

\begin{lemma}\label{lem:count-flips}
Every knitting $w_0$-word has length $\binom n2$.
\end{lemma}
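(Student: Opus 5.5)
By Lemma~\ref{lem:knitting-words}(b), the length of any knitting word from $D$ to $\bar D$ is the number of nodes of $R(D,\bar D)$:
\[
|w| \;=\; |R(D,\bar D)| \;=\; \sum_{j=1}^{n-1}\frac{\bar d_j-d_j}{2}.
\]
The plan is therefore to evaluate this sum directly from the definition $\bar d_j=d_{n-j}+n$ of $\bar D$.

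Substituting gives
\[
\sum_{j=1}^{n-1}\frac{\bar d_j-d_j}{2}
=\frac12\Bigl((n-1)n+\sum_{j=1}^{n-1}d_{n-j}-\sum_{j=1}^{n-1}d_j\Bigr).
\]
The map $j\mapsto n-j$ is a bijection of $\{1,\dots,n-1\}$ onto itself. Hence the two sums of heights coincide and cancel. What remains is $\tfrac12 n(n-1)=\binom n2$, as claimed.

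The only points to check are that each $\bar d_j-d_j$ is even, and that $R(D,\bar D)$ really is the set enumerated by knitting words. For evenness: $d_j\equiv j+1$ and $d_{n-j}\equiv n-j+1 \pmod 2$, since the path lies in $\mathcal S$. Therefore $\bar d_j=d_{n-j}+n\equiv j+1\pmod 2$, which matches the parity of $d_j$. For the second point, $\bar D$ lies weakly to the right of $D$, as noted before the statement, so Lemma~\ref{lem:knitting-words} applies.

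I expect no real obstacle here. Should one want the remark after the statement that such words are reduced words for $w_0$, that would require a separate argument and is not needed for the length count.
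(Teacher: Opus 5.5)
Your proof is correct and is essentially the paper's: apply Lemma~\ref{lem:knitting-words}(b), substitute $\bar d_j=d_{n-j}+n$, and cancel the two height sums by reindexing $j\mapsto n-j$. One small slip in your parity remark: since $d_j+j$ is even, $d_j\equiv j\pmod 2$, not $j+1$. The check still goes through, because any uniform offset cancels, and it also follows at once from the fact that $\bar D$ is again a Dynkin path in $\mathcal S$.
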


\begin{proof}
By Lemma~\ref{lem:knitting-words}(b),
\[
|R(D,\bar D)|
=
\frac12\sum_{j=1}^{n-1}(\bar d_j-d_j)
=
\frac12\sum_{j=1}^{n-1}(n+d_{n-j}-d_j)
=
\binom n2.
\]
\end{proof}

A word $w=(j_1,\dots,j_N)$ is \emph{adapted} to an orientation $\Omega$ if $j_1$ is a source
of $\Omega$ and, successively, $j_r$ is a source after reflections at
$j_1,\dots,j_{r-1}$. This is the usual notion of an adapted reduced word; see, for example,
\cite[\S4.7]{Lusztig}, \cite{Bedard}, and \cite[\S VII]{ASS}.

\begin{lemma}\label{lem:reduced-adapted}
The knitting $w_0$-words for $D$ are exactly the reduced words for $w_0$ adapted to
$\Omega_D$. Moreover, a reduced word for $w_0$ is adapted to at most one orientation.
\end{lemma}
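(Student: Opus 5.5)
Both assertions will be deduced from the structure of the region $R(D,\bar D)$ and from the classical fact that a word is a reduced word for $w_0$ exactly when it has length $\binom n2$ and no reduced-word obstruction arises. The plan has three steps.

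\textbf{Step 1: a knitting $w_0$-word is adapted to $\Omega_D$.} By definition each letter $j_r$ is flipped at a source of the current path. By the flip description, flipping a source is reflection of the orientation at that source. Hence the sequence of sources is adapted to $\Omega_D$ word-by-word.

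\textbf{Step 2: a knitting $w_0$-word is reduced for $w_0$.} The length is $\binom n2$ by Lemma~\ref{lem:count-flips}, so it suffices to show the product $s_{j_1}\cdots s_{j_N}$ in $\Sym_n$ is $w_0$ and the word is reduced. I would follow the standard AR-quiver argument:
\begin{itemize}
\item[(i)] Track the positive roots $\beta_r=s_{j_1}\cdots s_{j_{r-1}}(\alpha_{j_r})$.
\item[(ii)] Show these are distinct positive roots. Concretely, attach to each node $(i,j)\in R(D,\bar D)$ a dimension vector (the indecomposable representation of the quiver $\Omega_D$), with knitting corresponding to the mesh relations.
\item[(iii)] The $\binom n2$ roots $\beta_r$ are then all positive and distinct, so the word is reduced of length $\ell(w_0)$, hence represents $w_0$.
\end{itemize}
An alternative to (ii) is to verify directly, via wires, that each pair of wires crosses exactly once along the strip. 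Under this picture, wire $a$ enters at the left boundary of $D$ and exits on $\bar D$ reflected, since $\bar D$ is $D$ rotated. This is the step I expect to be the main obstacle. The cleanest route is probably induction on $n$: removing the top level $n-1$ (or the bottom level) gives knitting words for $A_{n-2}$ together with a single chain of $n-1$ flips at adjacent levels crossing one wire past all others.

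\textbf{Step 3: the converse.} Let $w$ be a reduced word for $w_0$ adapted to $\Omega_D$. Adaptedness means each letter is a source of the current path, so $w$ is a knitting word from $D$ to some path $D'$ whose orientation is $w_0\Omega_D w_0$-reflected. The orientation of $D'$ is determined by the reflections. Since $w$ is a reduced word for $w_0$, the product of reflections sends $\Omega_D$ to $\alpha(\Omega_D)$, the orientation of $\bar D$.

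To pin down $D'$ itself, count the letters. By Lemma~\ref{lem:knitting-words}(b), $D'$ is determined by the multiplicities of letters. Comparing with a knitting $w_0$-word $u\in\mathcal K(D)$, both $w$ and $u$ are reduced words for $w_0$ adapted to $\Omega_D$. By the commutation-class fact \cite{Lusztig,Bedard} (or directly: the roots $\beta_r$ for both are the full set of positive roots, and the letter multiplicities are read off from them through the AR quiver), they have the same multiplicities. Hence $D'=\bar D$.

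\textbf{Uniqueness of the orientation.} A reduced word $w$ adapted to two orientations $\Omega,\Omega'$ determines both: from Step 3, the number of occurrences of each letter in an adapted word determines the height differences $\bar d_j-d_j=n+d_{n-j}-d_j$. This in turn recovers the path $D$ up to translation, since $d_{j+1}-d_j$ is obtained from consecutive differences. So $\Omega=\Omega'$.
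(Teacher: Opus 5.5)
Your uniqueness argument fails, because the letter multiplicities of an adapted word do not determine the orientation. From $m_j=(n+d_{n-j}-d_j)/2$ you recover only the differences $d_{n-j}-d_j$. Equivalently, you recover the sums $\varepsilon_j+\varepsilon_{n-1-j}$ of the slopes $\varepsilon_j=d_{j+1}-d_j$, not the slopes themselves. Already for $n=4$ there is a counterexample:
\begin{itemize}
\item $(1,3,2,1,3,2)$ is a reduced word for $w_0$ adapted to $1\to2\leftarrow3$, knitting the path $(1,2,1)$ to $\bar D=(5,6,5)$;
\item $(2,1,3,2,1,3)$ is a reduced word for $w_0$ adapted to $1\leftarrow2\to3$, knitting $(3,2,3)$ to $\bar D=(7,6,7)$.
\end{itemize}
Both words use each letter exactly twice. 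The missing idea is the first-occurrence argument the paper uses. Every letter occurs in a reduced word for $w_0$. For each edge joining $j$ and $j+1$, whichever of the two letters occurs first must be a source at that moment. No earlier reflection has changed that edge, so it was originally oriented away from that letter.

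Steps 2 and 3 also do not yet constitute a proof.

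In Step 2, the positivity and distinctness of the $\beta_r$ is the entire content. You either import it from Auslander--Reiten theory or defer it to an induction on $n$ that you do not carry out. The paper avoids this as follows. The standard word $(1)(2,1)\cdots(n-1,\dots,1)$ lies in $\mathcal K(D_{\mathrm{lin}})$. If $(j,x)\in\mathcal K(D)$ and $D'$ is the flip at $j$, then $(x,n-j)\in\mathcal K(D')$, and $s_xs_{n-j}=s_jw_0s_{n-j}=w_0$. Reachability of every orientation and Lemma~\ref{lem:knitting-words}(c) do the rest, and the length $\binom n2$ gives reducedness.

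In Step 3, the converse rests on citing the commutation-class theorem, which is exactly what this direction asserts. Your ``direct'' alternative is circular: placing the $\beta_r$ of an arbitrary adapted word at AR nodes of level $j_r$ presupposes the conclusion. The assertion that the final orientation is $\alpha(\Omega_D)$ because the product is $w_0$ also needs an argument, since source reflections do not form an action of $\Sym_n$ on orientations.

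Once $w$ is known to be commutation-equivalent to some $u\in\mathcal K(D)$, the detour through orientations and multiplicities is unnecessary. Distant commutations preserve legality of flips and the endpoint, so $w\in\mathcal K(D)$ directly. The paper proves that equivalence by moving the first letter $j$ of $w$ to the front of $w'$. No neighbour of $j$ can occur before the first $j$ in $w'$, because a neighbour cannot become a source until the edge to $j$ has been reversed.
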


\begin{proof}
Every knitting word is adapted. We first show that it is a reduced word for $w_0$. For the linear path $D_{\mathrm{lin}}=(1,2,\dots,n-1)$, the standard word
\[
(1)(2,1)\cdots(n-1,\dots,1)
\]
is a knitting word. Indeed, after the segments through $(k,\dots,1)$ have been performed, the
heights are
\[
2k+1,\,2k,\,\dots,\,k+2,\,k+1,\,k+2,\dots,n-1,
\]
so $k+1$ is the next source. At the end the $j$-th height is $2n-j$, which is exactly the
$j$-th height of $\bar D_{\mathrm{lin}}$. The word is also the standard reduced word for $w_0$.

Suppose now that $D'$ is obtained from $D$ by flipping a source $j$. Choose
$(j,x)\in\mathcal K(D)$. Then
\[
(x,n-j)\in\mathcal K(D'),
\]
since the flip at $n-j$ takes $\bar D$ to $\bar D'$; see
Lemma~\ref{lem:flip}(1). If $s_js_x=w_0$, then
\[
s_xs_{n-j}=s_jw_0s_{n-j}=w_0.
\]
By Lemma~\ref{lem:sources-exist}, every orientation can be reached from the linear one by source
flips. It follows that some knitting word for every $D$ represents $w_0$. By
Lemma~\ref{lem:knitting-words}(c), they all do. Their length is
$\binom n2=\ell(w_0)$ by Lemma~\ref{lem:count-flips}, so they are reduced.

Conversely, let $w$ and $w'$ be reduced words for the same $u\in\Sym_n$, both adapted to
the same orientation. We claim that they differ only by commutations of nonadjacent letters.

Let $j$ be the first letter of $w$. Since $j$ lies in the support of $u$, it also occurs in
$w'$. Before its first occurrence in $w'$, no neighbour of $j$ can occur: such a neighbour
cannot become a source before the edge joining it to $j$ is reversed, and that requires
reflecting at $j$ first. Hence every preceding letter commutes with $j$, so we may move $j$
to the front of $w'$. Deleting the initial $j$ from both words and reflecting the orientation
at $j$, the claim follows by induction.

Now let $w$ be a reduced word for $w_0$ adapted to $\Omega_D$, and choose
$v\in\mathcal K(D)$. By the claim, $w$ and $v$ differ only by distant commutations. These
commutations preserve legality and the endpoint of the flip sequence, so
$w\in\mathcal K(D)$.

Finally, an adapted reduced word for $w_0$ determines the orientation. Every letter
$1,\dots,n-1$ occurs in such a word, and for each edge $j-(j+1)$, whichever of $j$ and $j+1$
occurs first determines its direction, since no other reflection changes that edge.
\end{proof}

\begin{remark}\label{rem:sinks-sources}
Lemma~\ref{lem:reduced-adapted} is standard in the representation theory of quivers. In this
language, knitting a slice $D$ of the repetition quiver $\ZZ A_{n-1}$ to its translate
$\bar D$ sweeps out the Auslander--Reiten quiver of $\mathrm{rep}(\Omega_D)$. Reading the
levels of its vertices in any linear extension gives a reduced word for $w_0$ adapted to
$\Omega_D$. Conversely, every adapted reduced word arises this way, and the adapted words
for a fixed orientation form a single commutation class. See, for example, Lusztig
\cite[\S4.7]{Lusztig}, B\'edard \cite{Bedard}, and \cite[\S VII]{ASS}. We included the
elementary proof above to keep the discussion accessible.

Much of the literature uses sinks rather than sources. We use sources throughout, as do
Garver, Patrias and Thomas \cite[Thm.~1.10]{GPT}, because a flip of a Dynkin path moves it to the
right. The two conventions are exchanged by reversing all arrows, i.e.\ by replacing $\Omega$
with $\Omega^{\mathrm{op}}$.
\end{remark}

\begin{corollary}\label{cor:Q-linear}
For the linear path $D_{\mathrm{lin}}=(1,2,\dots,n-1)$,
$
Q_{D_{\mathrm{lin}}}=q_{n-1}=q
\;\text{in }\Gam_n.
$
Thus every knitting $w_0$-word for $D_{\mathrm{lin}}$ represents evacuation in $\BK_n$.
\end{corollary}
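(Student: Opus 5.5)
The plan is to combine the explicit knitting word exhibited in the proof of Lemma~\ref{lem:reduced-adapted} with the commutation-invariance of Lemma~\ref{lem:knitting-words}(c), and then to apply Lemma~\ref{lem:q-is-evac}.

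First, we check that the standard word
\[
(1)(2,1)(3,2,1)\cdots(n-1,\dots,1)
\]
lies in $\mathcal K(D_{\mathrm{lin}})=W(D_{\mathrm{lin}},\bar D_{\mathrm{lin}})$. This was already verified inside the proof of Lemma~\ref{lem:reduced-adapted}. After the blocks $(1),(2,1),\dots,(k,\dots,1)$ have been performed, the height vector is
\[
(2k+1,2k,\dots,k+1,k+2,\dots,n-1).
\]
Vertex $k+1$, at height $k+1$, is then the unique minimum and hence a source. Flipping it raises $d_{k+1}$ to $k+3$, which makes $k$ the next source, and so on down to level $1$. This produces the block $(k+1,k,\dots,1)$ and gives the stated heights for $k+1$. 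At the end, $d_j=2n-j$. Since $\bar d_j=d_{n-j}+n=(n-j)+n$, this is $\bar D_{\mathrm{lin}}$. By definition, the product of $t$'s along this word is
\[
t_1(t_2t_1)\cdots(t_{n-1}\cdots t_1)=q_{n-1}.
\]

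Second, by Lemma~\ref{lem:knitting-words}(c), any two knitting words from $D_{\mathrm{lin}}$ to $\bar D_{\mathrm{lin}}$ differ only by commutations of letters $j,j'$ with $|j-j'|>1$. Such commutations are relations of $\Gam_n$, so every element of $\mathcal K(D_{\mathrm{lin}})$ defines the same element $Q_{D_{\mathrm{lin}}}$ of $\Gam_n$. Since the standard word is one of them, $Q_{D_{\mathrm{lin}}}=q_{n-1}$ in $\Gam_n$.

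Finally, $\Gam_n$ surjects onto $\BK_n$, because $\BK_n$ satisfies $t_i^2=1$ and distant commutation. Hence every knitting $w_0$-word for $D_{\mathrm{lin}}$ maps to $q_{n-1}$ in $\BK_n$, and by Lemma~\ref{lem:q-is-evac} it acts as evacuation on every shape.

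There is no real obstacle here. The only point requiring care is the bookkeeping in the first step: one must confirm that each intermediate flip in the block $(k+1,\dots,1)$ really occurs at a source. This follows because each flip leaves the next lower level as the local minimum relative to its neighbours.
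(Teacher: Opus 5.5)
Your proof is correct and follows essentially the paper's route. The paper gets $(1)(2,1)\cdots(n-1,\dots,1)\in\mathcal K(D_{\mathrm{lin}})$ by citing Lemma~\ref{lem:reduced-adapted}, whereas you redo the direct height computation from that lemma's proof; both arguments then use that $Q_{D_{\mathrm{lin}}}$ is well defined in $\Gam_n$ and that $q_{n-1}$ acts as evacuation.
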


\begin{proof}
The standard word
\[
(1)(2,1)\cdots(n-1,\dots,1)
\]
is adapted to the linear orientation, so by Lemma~\ref{lem:reduced-adapted} it is a knitting
$w_0$-word for $D_{\mathrm{lin}}$. Since this is the defining word for $q_{n-1}$, we get
$Q_{D_{\mathrm{lin}}}=q_{n-1}=q$. The last statement follows because $q$ acts as evacuation
in $\BK_n$.
\end{proof}

\begin{example}\label{ex:n4-strip}
Let $n=4$ and let $D=(1,2,3)$ be the linear path, so that
$
\Omega_D: 1\to2\to3.
$
Then $\bar D=(7,6,5)$, and $R(D,\bar D)$ consists of the six nodes
\[
(1,1),(3,1),(5,1);\qquad (2,2),(4,2);\qquad (3,3).
\]
They are shown in Figure~\ref{fig:strip}, together with the cover relations. The only
incomparable pair is $\{(3,1),(3,3)\}$, so there are exactly two knitting $w_0$-words:
\[
(1,2,1,3,2,1)\qquad\text{and}\qquad(1,2,3,1,2,1).
\]
These are the two reduced words for $w_0\in\Sym_4$ adapted to $1\to2\to3$, and they differ
by the commutation of $1$ and $3$. The first is the standard word for $q_3$.
\end{example}

\begin{figure}[ht]
\centering
\begin{tikzpicture}[scale=0.72,>=stealth]
\foreach \i in {-1,1,3,5,7,9} {\fill[gray!35] (\i,1) circle (2.2pt);}
\foreach \i in {0,2,4,6,8} {\fill[gray!35] (\i,2) circle (2.2pt);}
\foreach \i in {-1,1,3,5,7,9} {\fill[gray!35] (\i,3) circle (2.2pt);}
\node at (-2.2,1) {$j=1$}; \node at (-2.2,2) {$j=2$}; \node at (-2.2,3) {$j=3$};
\fill[blue!6] (1,1)--(2,2)--(3,3)--(5,3)--(6,2)--(7,1)--cycle;
\draw[very thick,blue] (1,1)--(2,2)--(3,3);
\draw[very thick,red]  (7,1)--(6,2)--(5,3);
\node[blue] at (0.4,0.6) {$D$};
\node[red]  at (7.7,0.6) {$\bar D$};
\draw[thick,->] (1.2,1.2)--(1.8,1.8);   
\draw[thick,->] (2.2,1.8)--(2.8,1.2);   
\draw[thick,->] (2.2,2.2)--(2.8,2.8);   
\draw[thick,->] (3.2,1.2)--(3.8,1.8);   
\draw[thick,->] (3.2,2.8)--(3.8,2.2);   
\draw[thick,->] (4.2,1.8)--(4.8,1.2);   
\foreach \p/\j in {1/1, 3/1, 5/1, 2/2, 4/2, 3/3} {\fill[black] (\p,\j) circle (3pt);}
\node[align=center] at (4,-0.6) {\small $\mathcal K(D)=\{(1,2,1,3,2,1),\ (1,2,3,1,2,1)\}$};
\end{tikzpicture}
\caption{The region $R(D,\bar D)$ for the linear path $D=(1,2,3)$, with cover relations
shown as arrows. The incomparable nodes $(3,1)$ and $(3,3)$ may be flipped in either order,
giving the two knitting $w_0$-words.}
\label{fig:strip}
\end{figure}

\subsection{Adapted words and sorting networks}\label{sec:X}

A reduced word for $w_0\in\Sym_n$ can be viewed as a \emph{sorting network} on $n$ wires.
Following Warrington \cite{Warrington}, choose a subset of $m$ wires and record the crossings among them.
The resulting network is again a reduced word for $w_0\in\Sym_m$, called an
\emph{$m$-subnetwork}. If $X$ is a set of reduced words in $\Sym_m$, we say that a reduced
word is \emph{$X$-avoiding} if none of its $m$-subnetworks belongs to $X$.

Warrington used this notion to formulate a combinatorial analogue of Sylvester's four-point problem. 
Goodman and Pollack \cite{GoodmanPollack} associate to a configuration of $n$ points in general position an
allowable sequence by rotating a directed line and recording the order of the orthogonal
projections of the points onto it. Each time two points exchange order we record the corresponding
adjacent transposition; over a half-turn every pair exchanges order exactly once, giving a reduced
word for $w_0\in\Sym_n$.  For $n=4$, the sixteen reduced
words split into twelve of \emph{convex type} and four of \emph{reentrant type} \cite[(1) and Fig.~1]{Warrington}; the latter are
\[
X=\{123212,\ 321232,\ 212321,\ 232123\}.
\]

Sylvester's problem asks for the probability that four random points are reentrant. Warrington's
combinatorial analogue asks for the probability that a uniformly random reduced word for
$w_0\in\Sym_n$, together with a uniformly random $4$-subset of its wires, induces a subnetwork in
$X$. He conjectured that this probability is $1/4$ for every $n\ge4$, and Angel and Holroyd
\cite{AH} proved the conjecture.
For $n\ge5$, not every reduced word comes from a point configuration under the
Goodman--Pollack construction. Thus ``convex'' and ``reentrant''  refer here to the
corresponding combinatorial types of reduced words, not necessarily to actual point configurations.

Meng \cite[\S6.3]{Meng} studied $X$-avoiding reduced words for $w_0\in\Sym_n$ and asked for
their number. The following theorem connects this question with adapted words.

\begin{theorem}\label{thm:X-avoiding}
Every knitting $w_0$-word is $X$-avoiding.
\end{theorem}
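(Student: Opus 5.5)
The plan is to show that restricting an adapted word to any four wires gives an adapted reduced word for $w_0\in\Sym_4$, and then to check directly that no word in $X$ is adapted. The restriction step goes through a wire-level description of adapted commutation classes. In the wiring diagram of a reduced word for $w_0$, every pair of wires $a<b$ crosses exactly once. For three wires $a<b<c$ the three crossings occur either in the order $(ab,ac,bc)$ or in the order $(bc,ac,ab)$. Call these the two \emph{types} of the triple. Standard pseudoline-arrangement theory says the commutation class of a reduced word for $w_0$ is determined by the types of all its triples. Restricting to a set of wires keeps the types of the triples inside that set.

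The key lemma is the following. For each orientation $\Omega$ there is a sign function $\sigma_\Omega:\{2,\dots,n-1\}\to\{\pm\}$ such that, for every word in $\mathcal K(D)$ with $\Omega_D=\Omega$, the type of each triple $a<b<c$ depends only on $\sigma_\Omega(b)$. This is the type-$A$ picture of Reading's $c$-sortable elements: the middle wires are split into "upper" and "lower" ones according to the Coxeter element read off from $\Omega$. I would prove the lemma by induction along source flips, using Lemma~\ref{lem:sources-exist} to reach every orientation from $D_{\mathrm{lin}}$.

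\begin{enumerate}
\item \emph{Base case.} For $D_{\mathrm{lin}}$, check directly on the word $(1)(2,1)\cdots(n-1,\dots,1)$ that every triple has the same type, so $\sigma$ is constant. By Lemma~\ref{lem:knitting-words}(c), commutations do not change the types.
\item \emph{Inductive step.} By the proof of Lemma~\ref{lem:reduced-adapted}, if $(j,x)\in\mathcal K(D)$ and $D'$ is the flip of $D$ at $j$, then $(x,n-j)\in\mathcal K(D')$. Moving the first crossing to the end corresponds to relabelling the wires: the two wires in positions $j,j+1$ cross first, and afterwards their roles are read off in reversed positions.
\item \emph{Tracking types.} Follow the types of triples through this relabelling. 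Triples containing neither of the two swapped wires keep their type. The remaining triples change in a way governed by the single sign attached to the flip, which produces $\sigma_{\Omega'}$ from $\sigma_\Omega$.
\end{enumerate}

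The bookkeeping in the last step is the main obstacle. Wires change indices under the cyclic shift, so one must check carefully that the new types again depend only on the middle wire. If this becomes messy, a fallback is to use the Auslander--Reiten description of Remark~\ref{rem:sinks-sources}. There the pair $(a,b)$ corresponds to the positive root $e_a-e_b$, and the crossing order is any linear extension of the path order in $R(D,\bar D)$. For a triple one then compares the positions of the nodes for $e_a-e_b$, $e_a-e_c$ and $e_b-e_c$. The relevant path, or its absence, is governed by whether the height function $D$ turns up or down at level $b$, more precisely around the nodes where the root crosses level $b-1$.

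Granting the lemma, take four wires $a<b<c<d$ of a knitting $w_0$-word. The types of the triples of the $4$-subnetwork are determined by $\sigma(b)$ and $\sigma(c)$. Hence the $4$-subnetwork lies in the same commutation class as some adapted word for $\Sym_4$, namely the class whose orientation induces those two signs; there are four such classes with $12$ words in total. Commutation classes are unions of full classes, so the subnetwork is itself adapted.

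It remains to check that no word of $X$ is adapted. By the last paragraph of the proof of Lemma~\ref{lem:reduced-adapted}, an adapted word determines its orientation: for each edge $j-(j+1)$, the letter among $j,j+1$ that occurs first is the tail of that edge.
\begin{itemize}
\item For $123212$ this gives the orientation $1\to2\to3$. The adapted words for this orientation are $121321$ and $123121$, so $123212$ is not adapted.
\item For $321232$ the same argument gives $1\leftarrow2\leftarrow3$, whose adapted words are $321323$ and $323123$ (the $\alpha$-images of the previous case), so $321232$ is not adapted.
\item The words $212321$ and $232123$ are handled the same way.
\end{itemize}
Hence every knitting $w_0$-word is $X$-avoiding.
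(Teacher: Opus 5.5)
Your route is genuinely different from the paper's. The paper never looks at wires. It combines the letter multiplicities $m_j=(n+d_{n-j}-d_j)/2$ from Lemma~\ref{lem:knitting-words}(b) with Meng's formula, which gives the number of $X$-subnetworks of any reduced word as $\sum_r(i_r-1)(n-i_r-1)-2\binom n4$. The $d$-dependent part cancels under $j\leftrightarrow n-j$, so the count is $0$. That proof is two lines, but it rests on an external counting formula and gives no structural information. Your plan, describing adapted classes by their triple types, is sound. As written, however, the key lemma is only asserted: you flag the inductive step as the main obstacle and do not carry it out. The phrase ``governed by the single sign attached to the flip'' hides the one point that needs an argument.

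The step closes as follows. Say a triple $a<b<c$ has type $+$ if its crossings occur as $(ab,ac,bc)$.
\begin{itemize}
\item In $(j,x)$ the wires $j,j+1$ cross first. So $\{j,j+1,c\}$ has type $+$ and $\{a,j,j+1\}$ has type $-$, and the hypothesis forces $\sigma(j+1)=+$ and $\sigma(j)=-$ whenever these triples exist.
\item Passing to $(x,n-j)$ relabels wires by the transposition $s_j$, not a cyclic shift. A triple $T$ meeting $\{j,j+1\}$ in at most one wire acquires the old type of $s_jT$, whose middle wire is $s_j(b)$.
\item Hence the new middle-$j$ triples $\{a,j,c\}$ with $c>j+1$ inherit $\sigma(j+1)=+$. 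The new middle-$(j+1)$ triples $\{a,j+1,c\}$ with $a<j$ inherit $\sigma(j)=-$.
\item The triples containing both $j$ and $j+1$ now have that crossing last. So $\{a,j,j+1\}$ becomes $+$ and $\{j,j+1,c\}$ becomes $-$.
\end{itemize}
These values agree, so the lemma holds for $D'$ with $\sigma$ negated at $j$ and $j+1$. Together with the base case, this identifies $\sigma_\Omega(b)=+$ if and only if the edge between $b-1$ and $b$ is oriented $b-1\to b$. This also shows every sign pair is realized in your $\Sym_4$ step. That step can be done without the pseudoline theorem: each word of $X$ has triple types not constant on the middle wire. For instance, $123212$ crosses in the order $12,13,14,34,24,23$, so $\{1,3,4\}$ and $\{2,3,4\}$ have different types.

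Once completed, your approach gives more than the paper's. Checking the sixteen words of $\Sym_4$ shows that $X$-avoidance says exactly this: for all $a<b<c<d$, the triples $\{a,b,c\},\{a,b,d\}$ have equal types, and so do $\{a,c,d\},\{b,c,d\}$. Compare $\{a,b,c\}$ with $\{a,b,c'\}$ inside $\{a,b,c,c'\}$, and $\{a,b,c\}$ with $\{a',b,c\}$ inside $\{a,a',b,c\}$. This shows the condition is equivalent to triple types depending only on the middle wire. All $2^{n-2}$ sign functions arise from orientations, and the triple types determine the commutation class, which is the theorem you already invoke. Hence an $X$-avoiding word is commutation-equivalent to an adapted word, and so is adapted by Lemma~\ref{lem:reduced-adapted}. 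Your method would therefore also prove the converse, Conjecture~\ref{conj:X}. It would give the conceptual explanation the paper says it lacks.
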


\begin{proof}
Write $w=(i_1,\dots,i_N)\in\mathcal K(D)$. By
Lemma~\ref{lem:knitting-words}(b), the letter $j$ occurs
\[
m_j=\frac{\bar d_j-d_j}{2}
=\frac{n+d_{n-j}-d_j}{2}
\]
times. Hence
\[
\sum_{r=1}^N(i_r-1)(n-i_r-1)
=
\frac n2\sum_{j=1}^{n-1}(j-1)(n-j-1)
+\frac12\sum_{j=1}^{n-1}(j-1)(n-j-1)(d_{n-j}-d_j).
\]
The second sum is zero by the symmetry $j\leftrightarrow n-j$. One can check that
\[
\sum_{j=1}^{n-1}(j-1)(n-j-1)
=\frac{(n-1)(n-2)(n-3)}6,
\]
so
\[
\sum_{r=1}^N(i_r-1)(n-i_r-1)
=\frac{n(n-1)(n-2)(n-3)}{12}
=2\binom n4.
\]
By \cite[Prop.~5.3]{Meng}, the number of $X$-subnetworks of $w$ is
\[
\sum_{r=1}^N(i_r-1)(n-i_r-1)-2\binom n4,
\]
and is therefore zero.
\end{proof}

\begin{conjecture}\label{conj:X}
A reduced word for $w_0\in\Sym_n$ is $X$-avoiding if and only if it is adapted to an orientation
of $A_{n-1}$. Equivalently, a reduced word for $w_0$ is adapted to an orientation of $A_{n-1}$
if and only if
\[
\sum_r (i_r-1)(n-i_r-1)=2\binom n4.
\]
\end{conjecture}

By Meng's formula, the displayed equality is exactly the condition of being $X$-avoiding, so the
content of the conjecture is that this numerical condition forces adaptedness.
We verified the conjecture by computer for $n=4,5,6$: in these cases the $X$-avoiding reduced
words are exactly the knitting $w_0$-words. For $n=7$ the number of adapted reduced words is
$68641152$, the number of $X$-avoiding reduced words reported by Warrington; by
Theorem~\ref{thm:X-avoiding} the two sets coincide, so the conjecture holds for $n\le7$. We have no conceptual explanation for why adapted reduced words should be governed by a convexity condition on four-point configurations.

\section{Adapted relations}\label{sec:second}

We now show that, modulo the cactus relations, the adapted  relations are equivalent to
the Dynkin-reversal relations of Section~\ref{sec:first}. The key is the following simple observation describing how
the knitting element $Q_D$ changes under a flip.

\begin{lemma}[Flip Lemma]\label{lem:flip}
Let $j$ be a source of a Dynkin path $D$, and let $D'$ be the flip of $D$ at $j$. Then
\begin{enumerate}
\item $n-j$ is a source of $\bar D$, and $\bar{D'}$ is the flip of $\bar D$ at $n-j$;
\item in $\Gam_n$,
\[
Q_{D'}=t_jQ_Dt_{n-j}.
\]
\end{enumerate}
\end{lemma}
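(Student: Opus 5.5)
Part (1) is a direct check on heights. Recall $\bar d_k=d_{n-k}+n$. The vertex $j$ is a source of $D$ when $d_{j'}=d_j+1$ for every neighbour $j'$ of $j$. The neighbours of $n-j$ are exactly the vertices $n-j'$ with $j'$ a neighbour of $j$. For such a vertex,
\[
\bar d_{n-j'}=d_{j'}+n=d_j+1+n=\bar d_{n-j}+1,
\]
so $n-j$ is a source of $\bar D$. The boundary cases $j=1$ and $j=n-1$ correspond to each other under this symmetry and need no separate treatment. Flipping $D$ at $j$ raises $d_j$ by $2$, so in $\bar{D'}$ only the coordinate $n-j$ changes, and it rises by $2$. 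That is precisely the flip of $\bar D$ at $n-j$.

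For part (2), I would argue with the knitting regions and the composition rule $Q(D,D'')=Q(D,D')Q(D',D'')$. First I need the order relations $D\le D'\le \bar D\le \bar{D'}$.
\begin{itemize}
\item $D\le D'$ and $\bar D\le\bar{D'}$ are clear, since each is a single flip.
\item For $D'\le\bar D$, the only coordinate to check is $j$. We need $d_j+2\le \bar d_j=n+d_{n-j}$. Adjacent coordinates of a Dynkin path differ by $1$, so $|d_j-d_{n-j}|\le |n-2j|\le n-2$, and the inequality follows.
\end{itemize}

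With these inequalities, the composition rule gives two factorisations of the same element:
\[
Q(D,\bar{D'})=Q(D,D')\,Q(D',\bar{D'})=t_j\,Q_{D'},
\]
\[
Q(D,\bar{D'})=Q(D,\bar D)\,Q(\bar D,\bar{D'})=Q_D\,t_{n-j}.
\]
The second line uses part (1): the single flip from $\bar D$ to $\bar{D'}$ is a knitting word, namely $(n-j)$. Comparing the two lines gives $t_jQ_{D'}=Q_Dt_{n-j}$. Since $t_j^2=1$ in $\Gam_n$, this is exactly $Q_{D'}=t_jQ_Dt_{n-j}$.

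The main thing to check is that Lemma~\ref{lem:knitting-words}(c) makes $Q(D,\bar{D'})$ well defined. The concatenations $(j)\cdot v$ and $v\cdot(n-j)$, with $v$ a knitting $w_0$-word, are both knitting words from $D$ to $\bar{D'}$, and all such words represent the same element of $\Gam_n$. The only inequality requiring care is $D'\le\bar D$, which is needed so that every intermediate path lies in the region; once it holds, the argument is purely formal.
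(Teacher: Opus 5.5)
Your proof is correct and is essentially the paper's argument. The paper writes a knitting $w_0$-word for $D$ as $(j,x)$ with $x\in W(D',\bar D)$ and notes that $x\cdot(n-j)$ is a knitting $w_0$-word for $D'$; you instead compare the two factorizations of $Q(D,\bar{D'})$ through $D'$ and through $\bar D$, which is the same composition-rule bookkeeping.

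One small correction: your version does not need $D'\le\bar D$. It only uses $D\le D'\le\bar{D'}$ and $D\le\bar D\le\bar{D'}$, and these hold for every path. In the paper's splitting $(j,x)$, by contrast, $D'\le\bar D$ is exactly what allows $x\in W(D',\bar D)$.
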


\begin{proof}
Part (1) is immediate from
\[
\bar d_k=d_{n-k}+n:
\]
a source at $j$ becomes a source at $n-j$, and flipping there replaces
$\bar d_{n-j}=d_j+n$ by $d_j+n+2$, which is exactly $\bar{D'}$.

For (2), note that $D'\le\bar D$, since the flip raises one height by $2$ and
$\bar d_j-d_j\ge2$. Since $j$ is a source, we can write a knitting word for $D$ as
$(j,x)$ with $x\in W(D',\bar D)$. Thus $Q_D=t_jt_x$, so $t_x=t_jQ_D$. By (1),
$(n-j)\in W(\bar D,\bar{D'})$, and therefore
\[
Q_{D'}=t_xt_{n-j}=t_jQ_Dt_{n-j}.
\]
\end{proof}

\begin{proposition}\label{prop:closed-formula}
Let $u\in W(D_{\mathrm{lin}},D)$. Then
$
Q_D=t_{\rev(u)}\,q\,t_{\alpha(u)}
$
in $\Gam_n$.
\end{proposition}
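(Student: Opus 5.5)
We prove the formula by induction on the length of $u\in W(D_{\mathrm{lin}},D)$, iterating the Flip Lemma (Lemma~\ref{lem:flip}(2)) along the knitting sequence $u$. The base case $u=\emptyset$ means $D=D_{\mathrm{lin}}$, and then $Q_D=q$ by Corollary~\ref{cor:Q-linear}. Here $t_{\rev(\emptyset)}=t_{\alpha(\emptyset)}=1$, so the formula holds.

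For the inductive step, write $u=(u',j)$ with $u'\in W(D_{\mathrm{lin}},D_0)$. Here $j$ is a source of $D_0$ and $D$ is the flip of $D_0$ at $j$. This decomposition is available because a knitting word is by definition a sequence of successive source flips, so truncating its last letter gives a knitting word to an intermediate path. By induction, $Q_{D_0}=t_{\rev(u')}\,q\,t_{\alpha(u')}$. The Flip Lemma then gives
\[
Q_D=t_j\,Q_{D_0}\,t_{n-j}=t_j\,t_{\rev(u')}\,q\,t_{\alpha(u')}\,t_{n-j}.
\]
Since $\rev(u)=(j,\rev(u'))$, we have $t_jt_{\rev(u')}=t_{\rev(u)}$. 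Since $\alpha(u)=(\alpha(u'),n-j)$, we have $t_{\alpha(u')}t_{n-j}=t_{\alpha(u)}$. This closes the induction.

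The one point needing care is that the Flip Lemma is stated for a single flip $D_0\to D$ with $Q_{D_0}$ and $Q_D$ both defined via $\bar D_0$ and $\bar D$. This holds for any Dynkin path, since $\bar D$ always lies weakly to the right of $D$. Well-definedness of $Q_D$ in $\Gam_n$ comes from Lemma~\ref{lem:knitting-words}(c).

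We also need that the final identity does not depend on which $u$ is chosen. It does not: the left-hand side $Q_D$ depends only on $D$, so the formula holds for every $u$. Alternatively, different $u$ differ by distant commutations, which $\rev$ and $\alpha$ respect in $\Gam_n$.

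The main (mild) obstacle is the orientation of the composition convention. We must check that prepending $t_j$ on the left and appending $t_{n-j}$ on the right matches $\rev$ and $\alpha$ respectively, rather than the reverse. The displayed computation above confirms this.
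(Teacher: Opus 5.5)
Your proof is correct and is the same as the paper's: induction on the length of $u$, applying the Flip Lemma (Lemma~\ref{lem:flip}(2)) at the last letter, with Corollary~\ref{cor:Q-linear} as the base case. You also track $\rev$ and $\alpha$ correctly under multiplication on the left and right, and your remark that the formula does not depend on the choice of $u$ is true but not needed.
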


\begin{proof}
This follows by induction on the length of $u$ from the Flip Lemma, with
Corollary~\ref{cor:Q-linear} as the base case.
\end{proof}

\begin{definition}
The \emph{adapted  relations} are the relations
$Q_D=q$
for every Dynkin path $D$.
Equivalently, they are the relations
$t_w=q$
for every reduced word $w$ for $w_0$ adapted to an orientation of $A_{n-1}$.
\end{definition}

\begin{theorem}\label{thm:equivalence}
In $\Cn_n$, the Dynkin-reversal relations are equivalent to the adapted  relations.
Thus,
\[
\RC_n
=
\Cn_n\Big/\Big\langle\!\Big\langle
Q_D=q\ \text{for all Dynkin paths }D
\Big\rangle\!\Big\rangle .
\]
\end{theorem}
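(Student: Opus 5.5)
The plan is to prove the two inclusions of normal subgroups of $\Cn_n$ separately. Proposition~\ref{prop:closed-formula} and the Flip Lemma will be the main tools.

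\emph{Dynkin-reversal implies adapted.} Assume $q\,t_i\,q=t_{n-i}$ for all $i$. Then conjugation by $q$ acts on every word as $\alpha$, so $q\,t_u\,q=t_{\alpha(u)}$ for any word $u$. Fix a Dynkin path $D$. By Lemma~\ref{lem:sources-exist}, after horizontal translation we may choose $u\in W(D_{\mathrm{lin}},D)$. This uses that some translate of $D$ lies weakly to the right of $D_{\mathrm{lin}}$ and is reachable from it by flips; reachability of the orientation together with translation invariance of $Q_D$ suffices. Proposition~\ref{prop:closed-formula} gives
\[
Q_D=t_{\rev(u)}\,q\,t_{\alpha(u)}=t_{\rev(u)}\,t_u\,q=t_u^{-1}t_u\,q=q .
\]
So every adapted relation holds. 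Equivalently, one can induct on flips: by the Flip Lemma, $Q_{D'}=t_jQ_Dt_{n-j}=t_j\,q\,t_{n-j}=t_j\,t_j\,q=q$.

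\emph{Adapted implies Dynkin-reversal.} Assume $Q_D=q$ for every $D$. Take any $D$ and any source $j$ of $D$, with $D'$ the flip at $j$. The Flip Lemma gives $q=Q_{D'}=t_j\,Q_D\,t_{n-j}=t_j\,q\,t_{n-j}$, hence $q\,t_i\,q=t_{n-i}$ with $i=j$ after inverting/rearranging (using $t_j^2=1$, $q^2=1$). So it suffices to show that every $i\in\{1,\dots,n-1\}$ occurs as a source of some Dynkin path. This holds because every orientation of $A_{n-1}$ arises from some path, and we can pick an orientation in which $i$ is a source, for instance all edges pointing away from $i$. Alternatively, the implication for $i$ is symmetric with that for $n-i$ after conjugating by $q$.

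Both directions only use identities in $\Gam_n$ together with $q^2=1$, which is (C1) for $q_{[1,n]}$, so the equivalence holds in $\Cn_n$ and indeed in any quotient of it. The displayed presentation of $\RC_n$ then follows directly from Definition~\ref{def:RC}.

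The main obstacle is the bookkeeping in the first direction: making sure a knitting word from $D_{\mathrm{lin}}$ to a suitable translate of $D$ exists, meaning the translate lies weakly to the right and is reachable. If that is awkward, the plan is to avoid Proposition~\ref{prop:closed-formula} and instead use the flip-by-flip induction. The graph of orientations is connected under source flips (Lemma~\ref{lem:sources-exist}), and each flip step preserves $Q_D=q$, starting from Corollary~\ref{cor:Q-linear}. Since $Q_D$ depends only on $\Omega_D$, this covers all $D$.
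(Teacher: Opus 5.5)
Your proposal is correct and is essentially the paper's proof. For the forward direction, the paper uses exactly your fallback flip-by-flip induction from $Q_{D_{\mathrm{lin}}}=q$ via the Flip Lemma; your primary route through Proposition~\ref{prop:closed-formula} is that same induction packaged as a closed formula. The reverse direction is identical: apply the Flip Lemma at a source $j$ of a suitably chosen $D$ to get $q=t_jqt_{n-j}$.
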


\begin{proof}
Assume first the Dynkin-reversal relations
$q\,t_i\,q=t_{n-i}$, equivalently $t_jq=q\,t_{n-j}$.
If $D'$ is obtained from $D$ by flipping a source $j$ and $Q_D=q$, then the Flip Lemma gives
\[
Q_{D'}=t_jqt_{n-j}=q.
\]
Since $Q_{D_{\mathrm{lin}}}=q$ and every orientation can be reached from the linear one by flips,
we get $Q_D=q$ for every $D$.

In the other direction, assume $Q_D=q$ for every Dynkin path $D$. Choose $D$ with $j$ as a source, and let
$D'$ be its flip at $j$. Then $Q_D=Q_{D'}=q$, so the Flip Lemma gives
$
q=t_jqt_{n-j},
$
or equivalently
$
q\,t_{n-j}\,q=t_j.
$
As $j$ varies, these are exactly the Dynkin-reversal relations.
\end{proof}

\begin{theorem}\label{thm:knit-are-evac}
Every reduced word for $w_0$ adapted to an orientation of $A_{n-1}$ represents evacuation in
$\BKR_n$. Equivalently,
$
Q_D=q
$
in $\BKR_n$ for every Dynkin path $D$.
\end{theorem}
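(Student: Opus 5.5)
The argument is short given the earlier results. By Theorem~\ref{thm:first-kind}, the Dynkin-reversal relations $q\,t_i\,q=t_{n-i}$ hold in $\BKR_n$ for all $1\le i\le n-1$. By Theorem~\ref{thm:CGP} and Corollary~\ref{cor:thm22}, $\BKR_n$ is a quotient of $\Cn_n$ in which these relations hold, so the natural map factors through $\RC_n$. Theorem~\ref{thm:equivalence} says that in $\Cn_n$, hence in every quotient of it, the Dynkin-reversal relations imply $Q_D=q$ for every Dynkin path $D$. Therefore $Q_D=q$ in $\BKR_n$.

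To keep the argument self-contained at the level of $\Gam_n$, I would also give the direct induction, which does not even need the cactus relations. Start from Corollary~\ref{cor:Q-linear}, which gives $Q_{D_{\mathrm{lin}}}=q$ in $\Gam_n$ and hence in $\BKR_n$. If $D'$ is the flip of $D$ at a source $j$, the Flip Lemma~\ref{lem:flip} gives $Q_{D'}=t_jQ_Dt_{n-j}$. Assuming inductively that $Q_D=q$ in $\BKR_n$, we get
\[
Q_{D'}=t_j\,q\,t_{n-j}=q\,(q\,t_j\,q)\,t_{n-j}=q\,t_{n-j}t_{n-j}=q,
\]
using Theorem~\ref{thm:first-kind} and $t_{n-j}^2=1$. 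Lemma~\ref{lem:sources-exist} says every orientation is reachable from the linear one by source flips. Since $Q_D$ depends only on $\Omega_D$ (it is invariant under horizontal translation), induction on the number of flips gives $Q_D=q$ for every $D$. Equivalently, one can invoke Proposition~\ref{prop:closed-formula}: $Q_D=t_{\rev(u)}\,q\,t_{\alpha(u)}$, and repeatedly pushing $q$ past letters by $q\,t_{\alpha(a)}=t_a\,q$ cancels $t_{\rev(u)}$ against $t_u$.

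For the word-level form of the statement, let $w$ be a reduced word for $w_0$ adapted to some orientation $\Omega$. Choose a Dynkin path $D$ with $\Omega_D=\Omega$. By Lemma~\ref{lem:reduced-adapted}, $w\in\mathcal K(D)$, so $t_w=Q_D=q$ in $\BKR_n$. By Lemma~\ref{lem:q-is-evac}, this means $t_w$ acts as evacuation on rectangles. There is no real obstacle here. The only care needed is to confirm that the Flip Lemma identity, proved in $\Gam_n$, descends to $\BKR_n$, which it does because $\BKR_n$ is a quotient of $\Gam_n$. The substantive input is Lemma~\ref{lem:theta-conj}, the geometric fact that $\theta$ conjugates $t_i$ to $t_{n-i}$ on rectangles.
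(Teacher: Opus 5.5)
Your proposal is correct and follows the paper's proof: the Dynkin-reversal relations hold in $\BKR_n$ by Theorem~\ref{thm:first-kind}, and Theorem~\ref{thm:equivalence} then gives $Q_D=q$. Your ``direct induction'' via the Flip Lemma is exactly the forward direction of the paper's proof of Theorem~\ref{thm:equivalence}; it uses $q^2=1$, which holds in $\BKR_n$ although not in $\Gam_n$.
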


\begin{proof}
The Dynkin-reversal relations hold in $\BKR_n$ by Theorem~\ref{thm:first-kind}, so the adapted
 relations hold by Theorem~\ref{thm:equivalence}.
\end{proof}

\section{Promotion}\label{sec:promotion}

We now relate the adapted  relations to promotion. The key is the following identity,
which already holds in the commutation group.

Recall $p=p_{n-1}=t_{n-1}t_{n-2}\cdots t_1$, so that
$p^{-1}=t_1t_2\cdots t_{n-1}$.

\begin{theorem}\label{thm:promotion-identity}
In $\Gam_n$, and hence in $\Cn_n$, $\BK_n$, and $\BKR_n$,
\[
q_{n-1}\alpha(q_{n-1})
=
(t_1t_2\cdots t_{n-1})^n
=
p^{-n}.
\]
\end{theorem}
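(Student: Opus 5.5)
The plan is to use the knitting machinery and the Flip Lemma, which already hold in $\Gam_n$. The idea is to knit all the way around the strip. Start from the linear path $D_{\mathrm{lin}}=(1,2,\dots,n-1)$ and form the word
\[
u=(1,2,\dots,n-1)^{\,n}.
\]
We will check that $u\in W(D_{\mathrm{lin}},D_{\mathrm{lin}}+2n)$. Indeed, from $(1,2,\dots,n-1)$ the vertex $1$ is a source. After flipping $1$, the heights are $(3,2,3,\dots)$, so $2$ is a source. Continuing, after flipping $1,\dots,k$ the path is $(3,4,\dots,k+2,k+2,\dots)$; more precisely, $d_j=j+2$ for $j\le k$ and $d_j=j$ for $j>k$, so $k+1$ is a source (the heights $k+2$, $k+1$, $k+2$ adjacent to it). After the full pass $(1,\dots,n-1)$, the path is $D_{\mathrm{lin}}+2$. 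Iterating $n$ times gives $D_{\mathrm{lin}}+2n$. Hence
\[
t_u=(t_1t_2\cdots t_{n-1})^n=Q(D_{\mathrm{lin}},D_{\mathrm{lin}}+2n)\quad\text{in }\Gam_n,
\]
where $Q(\cdot,\cdot)$ is well defined by Lemma~\ref{lem:knitting-words}(c).

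Next, factor $Q(D_{\mathrm{lin}},D_{\mathrm{lin}}+2n)$ through $\bar D_{\mathrm{lin}}$ using the composition rule. We have $\bar D_{\mathrm{lin}}=(2n-1,2n-2,\dots,n+1)$, and $D_{\mathrm{lin}}\le\bar D_{\mathrm{lin}}\le D_{\mathrm{lin}}+2n$, since $2n-j\le j+2n$. So
\[
Q(D_{\mathrm{lin}},D_{\mathrm{lin}}+2n)=Q_{D_{\mathrm{lin}}}\,Q(\bar D_{\mathrm{lin}},D_{\mathrm{lin}}+2n).
\]
The first factor is $q_{n-1}$ by Corollary~\ref{cor:Q-linear}. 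For the second factor, observe that $\bar D_{\mathrm{lin}}$ is the path of the opposite linear orientation, and that $D_{\mathrm{lin}}+2n$ is its bar: $\overline{\bar D_{\mathrm{lin}}}_j=\bar d_{n-j}+n=(n+j)+n=j+2n$. So the second factor is $Q_{\bar D_{\mathrm{lin}}}$.

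It remains to show $Q_{\bar D_{\mathrm{lin}}}=\alpha(q_{n-1})$. The reflection $(i,j)\mapsto(-i,n-j)$ of the strip, composed with a suitable translation, carries $D_{\mathrm{lin}}$ and $\bar D_{\mathrm{lin}}$ to $\bar D_{\mathrm{lin}}$ and $\overline{\bar D_{\mathrm{lin}}}$ respectively. It reverses the horizontal order and sends level $j$ to level $n-j$. The cover relations of $R$ are preserved up to reversal, so a linear extension maps to a reversed linear extension. This gives $Q_{\bar D_{\mathrm{lin}}}=\alpha(t_{\rev(w)})$ for $w\in\mathcal K(D_{\mathrm{lin}})$, that is, $\alpha(q^{-1})=\alpha(q)$, since $q$ is an involution in $\Gam_n$.

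As a more hands-on alternative, one can write down the word $\alpha$ of the standard word, namely $(n-1)(n-2,n-1)\cdots(1,\dots,n-1)$, and check directly that it is adapted to $1\leftarrow2\leftarrow\cdots\leftarrow n-1$, by the same height bookkeeping as in Lemma~\ref{lem:reduced-adapted}. By that lemma, all words adapted to this orientation are then commutation equivalent, and this gives $Q_{\bar D_{\mathrm{lin}}}=\alpha(q_{n-1})$.

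The main obstacle is this symmetry and orientation bookkeeping; the rest is formal. Finally, $p^{-1}=t_1\cdots t_{n-1}$ gives the last equality.
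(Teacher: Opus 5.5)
Your argument is correct and is essentially the paper's proof. You knit $(1,2,\dots,n-1)^n$ from $D_{\mathrm{lin}}$ to $D_{\mathrm{lin}}+2n=\overline{\bar D}_{\mathrm{lin}}$ and factor through $\bar D_{\mathrm{lin}}$. Your ``hands-on alternative'' for the second factor is exactly what the paper does, via the glide reflection $(i,j)\mapsto(i+n,n-j)$. That map preserves horizontal order and gives $Q(\bar D,\bar D')=\alpha(Q(D,D'))$ directly.

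Your first route, through a point reflection, is less clean. A $180^\circ$ rotation preserves slopes, so it sends $D_{\mathrm{lin}}$ to $\overline{\bar D}_{\mathrm{lin}}$ and $\bar D_{\mathrm{lin}}$ to itself, swapping the left and right boundaries. It does not map them ``respectively'' as you wrote. This route also needs $q^{-1}=q$ in $\Gam_n$, which is not a defining relation there; it is (C1) only in $\Cn_n$. It does hold, because the reversed standard word $(1,\dots,n-1)(1,\dots,n-2)\cdots(1)$ is also a knitting $w_0$-word for $D_{\mathrm{lin}}$, but you would have to check that. The alternative avoids this step entirely.
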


\begin{proof}
We compute
\[
Q\bigl(D_{\mathrm{lin}},D_{\mathrm{lin}}+(2n,0)\bigr)
\]
in two ways.

First, $(1,2,\dots,n-1)$ knits $D_{\mathrm{lin}}$ to $D_{\mathrm{lin}}+(2,0)$: after flipping
$1,2,\dots,j$, the next level $j+1$ is a source, and after all $n-1$ flips every height has
increased by $2$. Repeating this $n$ times gives
\[
Q\bigl(D_{\mathrm{lin}},D_{\mathrm{lin}}+(2n,0)\bigr)
=(t_1t_2\cdots t_{n-1})^n.
\]

On the other hand,
\[
D_{\mathrm{lin}}+(2n,0)=\bar{\bar D}_{\mathrm{lin}}.
\]
The map $(i,j)\mapsto(i+n,n-j)$ sends knitting words from $D$ to $D'$ to knitting words from
$\bar D$ to $\bar D'$, replacing every letter $j$ by $n-j$. Therefore 
$
Q(\bar D,\bar D')=\alpha\bigl(Q(D,D')\bigr).
$
Using the composition rule,
\[
Q\bigl(D_{\mathrm{lin}},\bar{\bar D}_{\mathrm{lin}}\bigr)
=
Q(D_{\mathrm{lin}},\bar D_{\mathrm{lin}})
Q(\bar D_{\mathrm{lin}},\bar{\bar D}_{\mathrm{lin}})
=
q\,\alpha(q).
\]
Comparing the two expressions gives the result.
\end{proof}

\begin{corollary}[Haiman's promotion relation]\label{cor:haiman}
The adapted  relations imply
\[
p^n=1.
\]
In particular, promotion has order dividing $n$ on every rectangular shape.
For standard rectangular tableaux this is Haiman's theorem
\cite[Thm.~4.4]{Haiman}; the semistandard statement follows by standardisation,
as observed for example in \cite[Prop.~3.2]{BST}. The order is exactly $n$ in $\BKR_n$.
\end{corollary}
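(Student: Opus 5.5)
We have to prove Corollary~\ref{cor:haiman}: the adapted relations imply $p^n=1$, promotion has order dividing $n$ on rectangles, and the order is exactly $n$ in $\BKR_n$.

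\textbf{Deriving $p^n=1$.} We apply the adapted relation for the opposite linear orientation $1\leftarrow2\leftarrow\cdots\leftarrow n-1$, realised by the path $D^{\mathrm{op}}=(n-1,n-2,\dots,1)$.

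First, compute $Q_{D^{\mathrm{op}}}$ in $\Gam_n$. The map $(i,j)\mapsto(i+n,n-j)$ followed by a horizontal translation sends $D_{\mathrm{lin}}$ to a translate of $D^{\mathrm{op}}$. As in the proof of Theorem~\ref{thm:promotion-identity}, it carries knitting words to knitting words with every letter $j$ replaced by $n-j$. Since $Q_D$ depends only on the orientation, this gives $Q_{D^{\mathrm{op}}}=\alpha(Q_{D_{\mathrm{lin}}})=\alpha(q)$. Alternatively, one checks directly that $\alpha$ of the standard word is adapted to the opposite linear orientation and invokes Lemma~\ref{lem:reduced-adapted}.

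The adapted relation $Q_{D^{\mathrm{op}}}=q$ therefore reads $\alpha(q)=q$. Substituting into Theorem~\ref{thm:promotion-identity} gives
\[
p^{-n}=q\,\alpha(q)=q^2 .
\]
Finally, $q^2=q_{[1,n]}^2=1$ by the cactus relation (C1), which holds in $\Cn_n$ and hence in every quotient we consider. So $p^n=1$.

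\textbf{Order dividing $n$ on rectangles.} By Theorem~\ref{thm:knit-are-evac}, the adapted relations hold in $\BKR_n$. So $p^n$ acts trivially on every $\SSYT((m^k),n)$. The attributions to Haiman and to standardisation are remarks, not part of what we need to prove.

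\textbf{Order exactly $n$ in $\BKR_n$.} We need a rectangular tableau whose promotion orbit has size exactly $n$; I expect this to be the only step requiring real care. I would use the single row of length $1$, that is, $\SSYT((1),n)=\{1,\dots,n\}$.

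On a one-box tableau there are no non-free entries. So $t_i$ swaps the values $i$ and $i+1$ and fixes everything else; it acts as the transposition $s_i$. Hence $p=t_{n-1}\cdots t_1$ acts as the permutation $s_{n-1}\cdots s_1$. With right-to-left composition, this sends
\[
1\mapsto2\mapsto\cdots\mapsto n\mapsto1 .
\]
It is an $n$-cycle and has order exactly $n$.

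Since $p^n=1$ in $\BKR_n$ and $p$ has order $n$ on a rectangle, the order of $p$ in $\BKR_n$ is exactly $n$.
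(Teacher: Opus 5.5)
Your proof is correct and follows the paper's argument. The adapted relation for the anti-linear orientation gives $\alpha(q)=q$, Theorem~\ref{thm:promotion-identity} then yields $p^{-n}=q\,\alpha(q)=q^2=1$, and the $1\times1$ rectangle, where $p$ is an $n$-cycle, shows the order is exactly $n$. One cosmetic point: the paper uses $\bar D_{\mathrm{lin}}=(2n-1,\dots,n+1)$ directly, which avoids the issue that your path $(n-1,\dots,1)$ does not lie in $\mathcal S$ when $n$ is odd.
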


\begin{proof}
For the anti-linear path $\bar D_{\mathrm{lin}}$, we have
$Q_{\bar D_{\mathrm{lin}}}=\alpha(q)$. Thus its adapted  relation is
$\alpha(q)=q$. By Theorem~\ref{thm:promotion-identity},
\[
p^{-n}=q\alpha(q)=q^2=1.
\]
Therefore $p^n=1$.

The order is exactly $n$ in $\BKR_n$, since already on the $1\times1$ rectangle, $p$ acts as an
$n$-cycle.
\end{proof}

\begin{remark}\label{rem:other-proofs}
For standard rectangular tableaux, Haiman's proof uses dual equivalence
\cite[Thm.~4.4]{Haiman}. Several other approaches are known. Bandlow, Schilling and Thi\'ery
\cite[Prop.~3.2]{BST} use crystal operations and \emph{jeu de taquin}; Rhoades
\cite{Rhoades} proves the result representation-theoretically as part of his cyclic-sieving
theorem for rectangular tableaux. Einstein and Propp \cite{EinsteinPropp} interpret
semistandard promotion through piecewise-linear rowmotion on a product of two chains and
deduce its periodicity from the birational rowmotion results of Grinberg and Roby
\cite{GrinbergRoby}. The first author~\cite{FriedenGrassmannian} realizes promotion as the
tropicalization of a cyclic shift on the Grassmannian, and Westbury \cite{Westbury19} uses
the cactus-group action on seminormal bases for Hecke algebras.
\end{remark}

One might ask whether the other way around, Haiman's relation $p^n=1$ implies the Dynkin-reversal relations.
It does not.

\begin{proposition}\label{prop:promotion-weaker}
For every $n\ge4$, the promotion relation $p^n=1$ is strictly weaker than the
Dynkin-reversal relations. Equivalently, the natural map
\[
\Cn_n\big/\langle\!\langle p^n\rangle\!\rangle \twoheadrightarrow \RC_n
\]
is not an isomorphism.
\end{proposition}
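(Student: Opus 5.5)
The plan is to exhibit a quotient of $\Cn_n$ in which $p^n=1$ holds but some Dynkin-reversal relation fails. Such a quotient gives a homomorphism out of $\Cn_n/\langle\!\langle p^n\rangle\!\rangle$ that does not factor through $\RC_n$, so the natural surjection cannot be an isomorphism. Corollary~\ref{cor:haiman} already shows that the Dynkin-reversal relations imply $p^n=1$, so only this strictness needs proof.

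\emph{Step 1: an abelian quotient.} I will use the presentation of $\Cn_n$ in the generators $t_1,\dots,t_{n-1}$ from Theorem~\ref{thm:CGP}. Let $V=(\ZZ/2)^{n-1}$ with basis $e_1,\dots,e_{n-1}$, and let $\bar V=V/\langle e_1+\cdots+e_{n-1}\rangle$. Define $\phi(t_i)=\bar e_i\in\bar V$. All defining relations hold in $\bar V$:
\begin{itemize}
\item $t_i^2=1$ holds because $\bar V$ is an elementary abelian $2$-group;
\item the commutations hold because $\bar V$ is abelian;
\item $(t_iq_{[j,k]})^2=1$ holds because every square in $\bar V$ is trivial.
\end{itemize}
Hence $\phi$ is a homomorphism $\Cn_n\to\bar V$.

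\emph{Step 2: promotion dies.} Since $p=t_{n-1}\cdots t_1$, we get $\phi(p)=\bar e_1+\cdots+\bar e_{n-1}=0$. Therefore $\phi(p^n)=0$, and $\phi$ factors through $\Cn_n/\langle\!\langle p^n\rangle\!\rangle$. Passing to the quotient by the all-ones vector is exactly what handles odd $n$. For even $n$, the map to $V$ itself would already suffice, since there $\phi(p^n)=n\sum e_i=0$.

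\emph{Step 3: Dynkin reversal fails.} Because $\bar V$ is abelian and $2$-torsion, $\phi(q\,t_1\,q)=\phi(t_1)=\bar e_1$. The relation $q t_1 q=t_{n-1}$ would therefore force $\bar e_1=\bar e_{n-1}$, that is, $e_1+e_{n-1}\in\langle e_1+\cdots+e_{n-1}\rangle$. For $n\ge4$ we have $n-1\ge3$, so $e_1+e_{n-1}$ is neither $0$ nor the all-ones vector, and this membership fails. Thus $\phi$ does not factor through $\RC_n$, which proves the proposition. This is also where the hypothesis $n\ge4$ enters: for $n=3$ we have $e_1+e_2$ equal to the all-ones vector, and the argument correctly breaks down.

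The only point needing care is that the relations $(t_iq_{[j,k]})^2=1$ from Theorem~\ref{thm:CGP} are all squares, so they hold automatically in any elementary abelian $2$-group. I expect no further obstacle.
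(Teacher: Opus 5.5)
Your proof is correct and is essentially the paper's argument: the paper also maps $\Cn_n$ to an elementary abelian $2$-group, via a sign character $\chi$ with $\chi(t_1)=-1$ for even $n$, or $\chi(t_1)=\chi(t_2)=-1$ for odd $n$. It notes that all relations of Theorem~\ref{thm:CGP} hold there, that $\chi(p^n)=1$, and that $\chi(t_1)\ne\chi(t_{n-1})$. Your quotient $\bar V$, whose characters are exactly the sign characters with $\chi(p)=1$, handles all $n\ge4$ at once and removes the parity case split.
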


\begin{proof}
Any choice of signs $\chi(t_i)\in\{\pm1\}$ defines a homomorphism
$\chi:\Cn_n\to\{\pm1\}$: the signs commute and square to $1$, so all the relations of
Theorem~\ref{thm:CGP} are satisfied.

The Dynkin-reversal relations would force
$\chi(t_i)=\chi(t_{n-i})$. We construct a character satisfying $p^n=1$ but not this relation.

If $n$ is even, set $\chi(t_1)=-1$ and $\chi(t_i)=1$ for $i>1$. Then
$\chi(p^n)=1$, but $\chi(t_1)\ne\chi(t_{n-1})$.

If $n\ge5$ is odd, set $\chi(t_1)=\chi(t_2)=-1$ and $\chi(t_i)=1$ otherwise. Then
$\chi(p)=1$, so again $\chi(p^n)=1$, while
$\chi(t_1)\ne\chi(t_{n-1})$.

The case $n=3$ is an exception: the promotion relation $p^3=1$ is equivalent to the
Dynkin-reversal relation: both are equivalent to
$t_1t_2t_1=t_2t_1t_2$. 
\end{proof}

\section{The big cactus group}\label{sec:big-cactus}

We identify the quotient of Definition~\ref{def:RC} with the big cactus
group. The proof uses the geometry of real moduli spaces and a generation
theorem for a parabolic subgroup of the cactus group.

\subsection{Real Deligne--Mumford spaces and equivariant fundamental groups}
\label{subsec:big-real-moduli}

For $r\geq3$, let $\overline M_{0,r}(\mathbb R)$ be the real
Deligne--Mumford space of stable genus-zero curves with $r$ labeled real
marked points.  A point of $\overline M_{0,r}(\mathbb R)$ is represented by
a connected nodal curve whose irreducible components are copies of
$\mathbb P^1(\mathbb R)$ and such that every component contains at least
three \emph{special points}, meaning marked points or nodes.  Two such
marked curves represent the same point if they are related by projective
automorphisms on their components.

The boundary stratification is described by the dual tree of the stable
curve: vertices correspond to irreducible components, internal edges to
nodes, and leaves to marked points.  The codimension of a stratum is the
number of internal edges of its dual tree.  In particular, the
zero-dimensional strata are represented by trivalent trees; we will use
special zero-dimensional points whose dual trees are caterpillars.

There is one feature of the real locus that will be used repeatedly below.
On every irreducible component, the order of the special points is intrinsic
only up to reversal.  Indeed, after choosing one special point as $\infty$,
the remaining automorphisms have the form
\[
        x\longmapsto ax+b,\qquad a\in\mathbb R^\times,
\]
and $a<0$ reverses the order.  Equivalently, after normalizing the
translation, the residual ambiguity includes the sign change
$x\mapsto -x$.  Thus a real boundary stratum may be encoded by a planar
dual tree, but at each internal vertex the cyclic order of the incident
half-edges is taken modulo reversal, independently at different vertices.

\begin{figure}[ht]
\centering
\begin{tikzpicture}[x=1cm,y=1cm,line cap=round,line join=round]
  \path[use as bounding box] (-0.2,-0.2) rectangle (8.4,3.3);

  \node at (1.8,3.0) {\small a boundary point};

  \def\r{0.62}

  \begin{scope}[shift={(1.25,1.45)}]
    \draw[line width=.85pt] (0,0) circle (\r);
    \fill (210:\r) circle (1.6pt);
    \node[left=2pt] at (210:\r) {$1$};
    \fill (140:\r) circle (1.6pt);
    \node[left=2pt] at (140:\r) {$2$};
  \end{scope}

  \begin{scope}[shift={(2.49,1.45)}]
    \draw[line width=.85pt] (0,0) circle (\r);
    \fill (80:\r) circle (1.6pt);
    \node[above=2pt] at (80:\r) {$3$};
    \fill (350:\r) circle (1.6pt);
    \node[right=0pt] at (350:\r) {$4$};
    \fill (255:\r) circle (1.6pt);
    \node[below=2pt] at (255:\r) {$\infty$};
  \end{scope}

  \fill (1.87,1.45) circle (1.6pt);

  \draw[<->,line width=.7pt] (3.55,1.45)--(4.50,1.45);

  \node at (5.95,3.0) {\small dual tree};

  \coordinate (vL) at (5.40,1.45);
  \coordinate (vR) at (6.45,1.45);

  \draw[line width=.85pt] (vL)--(vR);
  \draw[line width=.85pt] (vL)--(4.85,2.18);
  \draw[line width=.85pt] (vL)--(4.85,0.72);
  \draw[line width=.85pt] (vR)--(7.00,2.18);
  \draw[line width=.85pt] (vR)--(7.28,1.45);
  \draw[line width=.85pt] (vR)--(6.83,0.60);

  \fill (vL) circle (1.6pt);
  \fill (vR) circle (1.6pt);

  \foreach \x/\y/\lab/\pos in {
    4.85/2.18/1/above,
    4.85/0.72/2/below,
    7.00/2.18/3/above,
    7.28/1.45/4/right,
    6.83/0.60/{\infty}/below}
  {
    \fill (\x,\y) circle (1.6pt);
    \node[\pos] at (\x,\y) {$\lab$};
  }
\end{tikzpicture}
\caption{A boundary point of $\overline{M}_{0,n}(\mathbb R)$ and its dual tree.
Each vertex of the tree represents an irreducible component, each internal edge
represents a node, and the labeled leaves represent marked points.}
\label{fig:big-real-boundary}
\end{figure}

\begin{figure}[ht]
\centering
\begin{tikzpicture}[x=1cm,y=1cm,line cap=round,line join=round]
  \path[use as bounding box] (-0.2,-0.15) rectangle (10.3,3.65);

  \def\R{1.05}

  \begin{scope}[shift={(2.45,2.10)}]
    \draw[line width=.85pt] (0,0) circle (\R);

    \fill (155:\R) circle (1.6pt);
    \node[anchor=south east,xshift=-1pt,yshift=2pt] at (155:1.25) {$a$};

    \fill (112:\R) circle (1.6pt);
    \node[anchor=south,yshift=2pt] at (112:1.25) {$b$};

    \fill (68:\R) circle (1.6pt);
    \node[anchor=south,yshift=2pt] at (68:1.25) {$c$};

    \fill (25:\R) circle (1.6pt);
    \node[anchor=south west,xshift=1pt,yshift=2pt] at (25:1.25) {$d$};
  \end{scope}

  \node[font=\scriptsize] at (2.45,0.55)
    {one choice of affine coordinate};

  \draw[->,line width=.75pt] (4.05,2.10)--(6.00,2.10);
  \node[anchor=south,yshift=2pt] at (5.03,2.10)
    {\scriptsize $x\mapsto -x$};

  \begin{scope}[shift={(7.60,2.10)}]
    \draw[line width=.85pt] (0,0) circle (\R);

    \fill (155:\R) circle (1.6pt);
    \node[anchor=south east,xshift=-1pt,yshift=2pt] at (155:1.25) {$d$};

    \fill (112:\R) circle (1.6pt);
    \node[anchor=south,yshift=2pt] at (112:1.25) {$c$};

    \fill (68:\R) circle (1.6pt);
    \node[anchor=south,yshift=2pt] at (68:1.25) {$b$};

    \fill (25:\R) circle (1.6pt);
    \node[anchor=south west,xshift=1pt,yshift=2pt] at (25:1.25) {$a$};
  \end{scope}

  \node[font=\scriptsize] at (7.60,0.55)
    {the reversed affine coordinate};
\end{tikzpicture}

\caption{On an irreducible real component, the special points are considered
modulo $\operatorname{PGL}_2(\mathbb R)$. Consequently, their cyclic order is
intrinsic only up to reversal.}
\label{fig:big-local-reversal}
\end{figure}

The connected components of the open stratum $M_{0,r}(\mathbb R)$ are
therefore indexed by cyclic orders of the $r$ labels, modulo reversal.  If one
marking is distinguished and denoted by $\infty$, a chamber is equivalently
described by a linear order of the remaining labels, again modulo reversal.
The closure of every chamber is an associahedron, hence is contractible; see
\cite{DJS} and the references therein.

For a finite group $G$ acting on a connected space $X$, we use the
equivariant fundamental group $\pi_1^G(X,x)$.  An element is represented by
a pair $(g,\gamma)$, where $g\in G$ and $\gamma$ is a path from $x$ to
$g x$, modulo homotopy relative to the endpoints.  There is an exact sequence
\begin{equation}\label{eq:big-equivariant-exact}
 1\longrightarrow\pi_1(X,x)\longrightarrow\pi_1^G(X,x)
 \longrightarrow G\longrightarrow1.
\end{equation}
In particular, if $g$ fixes $x$, then $(g,\mathrm{const}_x)$ is an element of
$\pi_1^G(X,x)$; its underlying path is constant although its permutation part
is nontrivial.

We use the topological realization of the cactus group
\cite{DJS,HK}:
\[
 \Cn_n=\pi_1^{\Sym_n}\bigl(\overline M_{0,n+1}(\mathbb R)\bigr),
\]
where the marked points are $1,\ldots,n,\infty$ and $\Sym_n$ fixes
$\infty$.  Under this identification the permutation of $q_{[a,b]}$ is the
reversal $w_{[a,b]}$ of the interval $[a,b]$.  Henriques's \emph{big cactus
group} is
\[
 J'_n=\pi_1^{\Sym_n}\bigl(\overline M_{0,n}(\mathbb R)\bigr),
\]
where all $n$ markings are permuted \cite{HenriquesBig}.

We will also use the \emph{mirabolic cactus group} studied by Gurenkova,
Halacheva and the third author \cite{GHR}:
\[
 \mathrm{MC}_m=
 \pi_1^{\Sym_m}\bigl(\overline M_{0,m+2}(\mathbb R)\bigr)
 \subset \Cn_{m+1}.
\]
Here the markings are $0,1,\ldots,m,\infty$; $\Sym_m$ fixes $0$ and
$\infty$ and permutes $1,\ldots,m$.  Thus $\mathrm{MC}_m$ is the full
preimage of $\Sym_m\subset\Sym_{m+1}$ under the permutation map.  In
particular, the subgroup fixing the first marking in $\Cn_n$ is a relabeled
copy of $\mathrm{MC}_{n-1}$.

\subsection{Caterpillar paths and forgetting the root}
\label{subsec:big-caterpillar-paths}

Fix $n\geq3$ and write
\[
 X_n=\overline M_{0,n+1}(\mathbb R),\qquad
 Y_n=\overline M_{0,n}(\mathbb R).
\]
We use the caterpillar point $x\in X_n$ whose two end components carry
$1,2$ and $n,\infty$, respectively, and whose remaining markings occur in
increasing order along the spine.  Let $D\subset M_{0,n+1}(\mathbb R)$ be
the chamber with cyclic order $1,2,\ldots,n,\infty$.  Since $\overline D$
is contractible, there is a preferred homotopy class of paths
$\gamma\subset\overline D$ from $x$ to a generic point of $D$.  This
identifies the equivariant fundamental group based at the caterpillar orbit
with the usual cactus group $\Cn_n$.

The standard topological representative of the cactus generator
$q_{[a,b]}$ may be seen directly on the open chambers.  Starting from a
configuration in $D$, move the consecutive block $a,a+1,\ldots,b$ onto a
separate component.  At the resulting boundary point the order on that new
real component is defined only up to reversal.  Leaving the boundary in the
other adjacent chamber therefore reverses the block.  The path ends at the
configuration obtained by $w_{[a,b]}$.  Conjugating this path by $\gamma$
gives a representative based at $x$.

\begin{figure}[ht]
\centering
\begin{tikzpicture}[x=1cm,y=1cm,line cap=round,line join=round]
\tikzset{
  comp/.style={line width=.85pt},
  lab/.style={font=\scriptsize},
  title/.style={font=\small}
}

\def\R{1.00}   
\def\r{0.58}   

\begin{scope}[shift={(0,3.8)}]
  \node[title] at (1.4,2.55) {1. smooth curve};

  \begin{scope}[shift={(1.4,1.05)}]
    \draw[comp] (0,0) circle (\R);

    \fill (215:\R) circle (1.4pt); \node[lab,left=1pt]  at (215:\R) {$1$};
    \fill (175:\R) circle (1.4pt); \node[lab,left=1pt]  at (175:\R) {$\cdots$};
    \fill (130:\R) circle (1.4pt); \node[lab,above=1pt] at (130:\R) {$a-1$};
    \fill (95:\R)  circle (1.4pt); \node[lab,above=1pt] at (95:\R)  {$a$};
    \fill (55:\R)  circle (1.4pt); \node[lab,above=1pt] at (55:\R)  {$\cdots$};
    \fill (20:\R)  circle (1.4pt); \node[lab,right=1pt] at (20:\R)  {$b$};
    \fill (330:\R) circle (1.4pt); \node[lab,right=1pt] at (330:\R) {$b+1$};
    \fill (285:\R) circle (1.4pt); \node[lab,below=1pt] at (285:\R) {$\cdots$};
    \fill (245:\R) circle (1.4pt); \node[lab,below=1pt] at (245:\R) {$\infty$};
  \end{scope}
\end{scope}

\begin{scope}[shift={(4.8,3.8)}]
  \node[title] at (1.9,2.55) {2. bubble off $[a,b]$};

  \begin{scope}[shift={(1.2,1.05)}]
    \draw[comp] (0,0) circle (\R);
    \fill (215:\R) circle (1.4pt); \node[lab,left=1pt]  at (215:\R) {$1$};
    \fill (175:\R) circle (1.4pt); \node[lab,left=1pt]  at (175:\R) {$\cdots$};
    \fill (130:\R) circle (1.4pt); \node[lab,above=1pt] at (130:\R) {$a-1$};
    \fill (330:\R) circle (1.4pt); \node[lab,right=1pt] at (330:\R) {$b+1$};
    \fill (285:\R) circle (1.4pt); \node[lab,below=1pt] at (285:\R) {$\cdots$};
    \fill (245:\R) circle (1.4pt); \node[lab,below=1pt] at (245:\R) {$\infty$};
  \end{scope}

  \begin{scope}[shift={(2.78,1.05)}]
    \draw[comp] (0,0) circle (\r);
    \fill (125:\r) circle (1.4pt); \node[lab,above=1pt] at (125:\r) {$a$};
    \fill (85:\r)  circle (1.4pt); \node[lab,above=1pt] at (85:\r)  {$\cdots$};
    \fill (25:\r)  circle (1.4pt); \node[lab,right=1pt] at (25:\r)  {$b$};
  \end{scope}

  \fill (2.20,1.05) circle (1.5pt);
\end{scope}

\begin{scope}[shift={(4.8,0)}]
  \node[title] at (1.9,2.55) {3. reverse the bubble};

  \begin{scope}[shift={(1.2,1.05)}]
    \draw[comp] (0,0) circle (\R);
    \fill (215:\R) circle (1.4pt); \node[lab,left=1pt]  at (215:\R) {$1$};
    \fill (175:\R) circle (1.4pt); \node[lab,left=1pt]  at (175:\R) {$\cdots$};
    \fill (130:\R) circle (1.4pt); \node[lab,above=1pt] at (130:\R) {$a-1$};
    \fill (330:\R) circle (1.4pt); \node[lab,right=1pt] at (330:\R) {$b+1$};
    \fill (285:\R) circle (1.4pt); \node[lab,below=1pt] at (285:\R) {$\cdots$};
    \fill (245:\R) circle (1.4pt); \node[lab,below=1pt] at (245:\R) {$\infty$};
  \end{scope}

  \begin{scope}[shift={(2.78,1.05)}]
    \draw[comp] (0,0) circle (\r);
    \fill (125:\r) circle (1.4pt); \node[lab,above=1pt] at (125:\r) {$b$};
    \fill (85:\r)  circle (1.4pt); \node[lab,above=1pt] at (85:\r)  {$\cdots$};
    \fill (25:\r)  circle (1.4pt); \node[lab,right=1pt] at (25:\r)  {$a$};
  \end{scope}

  \fill (2.20,1.05) circle (1.5pt);
\end{scope}

\begin{scope}[shift={(0,0)}]
  \node[title] at (1.4,2.55) {4. smooth again};

  \begin{scope}[shift={(1.4,1.05)}]
    \draw[comp] (0,0) circle (\R);

    \fill (215:\R) circle (1.4pt); \node[lab,left=1pt]  at (215:\R) {$1$};
    \fill (175:\R) circle (1.4pt); \node[lab,left=1pt]  at (175:\R) {$\cdots$};
    \fill (130:\R) circle (1.4pt); \node[lab,above=1pt] at (130:\R) {$a-1$};
    \fill (95:\R)  circle (1.4pt); \node[lab,above=1pt] at (95:\R)  {$b$};
    \fill (55:\R)  circle (1.4pt); \node[lab,above=1pt] at (55:\R)  {$\cdots$};
    \fill (20:\R)  circle (1.4pt); \node[lab,right=1pt] at (20:\R)  {$a$};
    \fill (330:\R) circle (1.4pt); \node[lab,right=1pt] at (330:\R) {$b+1$};
    \fill (285:\R) circle (1.4pt); \node[lab,below=1pt] at (285:\R) {$\cdots$};
    \fill (245:\R) circle (1.4pt); \node[lab,below=1pt] at (245:\R) {$\infty$};
  \end{scope}
\end{scope}


\draw[->,line width=.7pt] (3.35,4.85)--(4.15,4.85);

\draw[->,line width=.7pt] (6.70,3.30)--(6.70,2.40);

\draw[->,line width=.7pt] (4.15,1.05)--(3.35,1.05);

\end{tikzpicture}
\caption{A topological representative of the generator $q_{[a,b]}$.
First, the consecutive marked points $a,\ldots,b$ bubble off onto a new
component. Since the real projective line is considered modulo
$\operatorname{PGL}_2(\mathbb R)$, the order on that component is defined only
up to reversal. Reversing the bubbled component and smoothing the node
produces the order obtained from the original one by reversing the interval
$[a,b]$.}
\label{fig:big-qab-path}
\end{figure}

There is a second, boundary-based description which is especially convenient
for us.  Let
\[
 t_1,\ldots,t_{n-1}
\]
be the elementary caterpillar paths of \cite{GHR}.  For $2\leq i<n$, the
path $t_i$ is the shortest edge of the $1$-skeleton joining $x$ to the
adjacent caterpillar obtained by exchanging the positions of $i$ and $i+1$.
Locally, only the four branches
\[
 L=\{1,\ldots,i-1\},\qquad i,\qquad i+1,\qquad
 R=\{i+2,\ldots,n,\infty\}
\]
move.  The corresponding one-dimensional stratum is a copy of
$\overline M_{0,4}(\mathbb R)$: $t_i$ is the arc joining the two displayed
splittings and avoiding the third zero-dimensional point.  For $i=1$, the
first two markings lie on the same end component, so the underlying path is
constant and the equivariant permutation is $(1,2)$.

\begin{figure}[ht]
\centering
\begin{tikzpicture}[
    x=1cm,
    y=1cm,
    line cap=round,
    line join=round,
    component/.style={line width=.85pt},
    marked/.style={
        circle,
        fill=black,
        inner sep=0pt,
        minimum size=2.4pt
    },
    steparrow/.style={
        line width=.75pt,
        ->
    }
]

\def\r{0.24}
\def\colsep{4.2}
\def\rowsep{2.9}

\newcommand{\LRparts}{
  \node at (0.25,0) {$L$};
  \node at (0.65,0) {$\cdots$};
  \node at (2.70,0) {$\cdots$};
  \node at (3.10,0) {$R$};
}

\newcommand{\ptitle}[1]{
  \node[font=\scriptsize,align=center] at (1.7,1.25) {#1};
}

\begin{scope}[shift={(0,\rowsep)}]
  \ptitle{1. adjacent\\components}
  \LRparts

  \draw[component] (1.35,0) circle (\r);
  \draw[component] (1.83,0) circle (\r);

  \path (1.35,0) ++(100:\r) coordinate (iA);
  \path (1.83,0) ++(80:\r) coordinate (ipA);

  \node[marked] at (iA) {};
  \node[marked] at (ipA) {};

  \node[anchor=south,yshift=2pt] at (iA) {$i$};
  \node[anchor=south,yshift=2pt] at (ipA) {$i+1$};
\end{scope}

\begin{scope}[shift={(\colsep,\rowsep)}]
  \ptitle{2. move $i$\\down}
  \LRparts

  \draw[component] (1.35,0) circle (\r);
  \draw[component] (1.83,0) circle (\r);

  \path (1.35,0) ++(260:\r) coordinate (iB);
  \path (1.83,0) ++(80:\r) coordinate (ipB);

  \node[marked] at (iB) {};
  \node[marked] at (ipB) {};

  \node[anchor=north,yshift=-2pt] at (iB) {$i$};
  \node[anchor=south,yshift=2pt] at (ipB) {$i+1$};
\end{scope}

\begin{scope}[shift={(2*\colsep,\rowsep)}]
  \ptitle{3. smooth\\the node}
  \LRparts

  \draw[component] (1.59,0) ellipse [x radius=.52,y radius=.22];

  \path (1.59,0) ++({.52*cos(250)},{.22*sin(250)}) coordinate (iC);
  \path (1.59,0) ++({.52*cos(70)},{.22*sin(70)})   coordinate (ipC);

  \node[marked] at (iC) {};
  \node[marked] at (ipC) {};

  \node[anchor=north,yshift=-2pt] at (iC) {$i$};
  \node[anchor=south,yshift=2pt] at (ipC) {$i+1$};
\end{scope}

\begin{scope}[shift={(2*\colsep,0)}]
  \ptitle{4. pass\\each other}
  \LRparts

  \draw[component] (1.59,0) ellipse [x radius=.52,y radius=.22];

  \path (1.59,0) ++({.52*cos(290)},{.22*sin(290)}) coordinate (iD);
  \path (1.59,0) ++({.52*cos(110)},{.22*sin(110)}) coordinate (ipD);

  \node[marked] at (iD) {};
  \node[marked] at (ipD) {};

  \node[anchor=north,yshift=-2pt] at (iD) {$i$};
  \node[anchor=south,yshift=2pt] at (ipD) {$i+1$};
\end{scope}

\begin{scope}[shift={(\colsep,0)}]
  \ptitle{5. split\\again}
  \LRparts

  \draw[component] (1.35,0) circle (\r);
  \draw[component] (1.83,0) circle (\r);

  \path (1.35,0) ++(100:\r) coordinate (ipE);
  \path (1.83,0) ++(270:\r) coordinate (iE);

  \node[marked] at (ipE) {};
  \node[marked] at (iE) {};

  \node[anchor=south,yshift=2pt] at (ipE) {$i+1$};
  \node[anchor=north,yshift=-2pt] at (iE) {$i$};
\end{scope}

\begin{scope}[shift={(0,0)}]
  \ptitle{6. exchanged\\order}
  \LRparts

  \draw[component] (1.35,0) circle (\r);
  \draw[component] (1.83,0) circle (\r);

  \path (1.35,0) ++(100:\r) coordinate (ipF);
  \path (1.83,0) ++(80:\r) coordinate (iF);

  \node[marked] at (ipF) {};
  \node[marked] at (iF) {};

  \node[anchor=south,yshift=2pt] at (ipF) {$i+1$};
  \node[anchor=south,yshift=2pt] at (iF) {$i$};
\end{scope}


\draw[steparrow] (3.45,\rowsep) -- (4.05,\rowsep);
\draw[steparrow] (7.65,\rowsep) -- (8.25,\rowsep);

\draw[steparrow] (9.85,2.35) -- (9.85,0.60);

\draw[steparrow] (8.25,0) -- (7.65,0);
\draw[steparrow] (4.05,0) -- (3.45,0);

\end{tikzpicture}
\caption{A topological representative of the elementary path $t_i$.
Here \(L\) and \(R\) denote the unchanged left and right parts of the
caterpillar curve. The path $t_i$ is realized by moving the marked point
$i$ to the lower side of its component, smoothing the node between the two
adjacent components, passing the two marked points through the smoothed
component, and then splitting the component again. The resulting caterpillar
has the same shape, with the order of $i$ and $i+1$ exchanged.}
\label{fig:big-cactus-generator}
\end{figure}

The comparison theorem of \cite{GHR} identifies
these geometric paths with the elements $t_i$ of
Theorem~\ref{thm:CGP}.  Moreover, if
\[
 p_i=t_i\cdots t_1,\qquad q_i=p_1\cdots p_i,
\]
then the path
\[
 q_{[a,b]}=q_{b-1}q_{b-a}q_{b-1}
\]
is homotopic to the standard interval-reversal path of
Figure~\ref{fig:big-qab-path}.  We will use one further result from
the same paper.  If $\pi:\Cn_n\to\Sym_n$ is the permutation map, then
\begin{equation}\label{eq:big-mirabolic-input}
 \pi^{-1}\bigl(\operatorname{Stab}_{\Sym_n}(1)\bigr)
 =\big\langle q_{[a,b]}\ (2\leq a<b\leq n),\;
                    t_2,\ldots,t_{n-1}\big\rangle.
\end{equation}
This is the mirabolic generation theorem, with the labels shifted by one.

Now let
\[
 f:X_n\longrightarrow Y_n
\]
forget $\infty$ and stabilize, and put $y=f(x)$.  The map is
$\Sym_n$-equivariant, so it induces
\begin{equation}\label{eq:big-forgetful-map}
 f_*:\Cn_n\longrightarrow J'_n.
\end{equation}
For $n\geq4$, forgetting $\infty$ contracts the last component of the rooted
caterpillar.  Thus the two end components of $y$ carry the pairs $1,2$ and
$n-1,n$.  The resulting unrooted caterpillar has a reflection symmetry which,
together with the relabeling $a\mapsto n+1-a$, fixes the point $y$.

\begin{figure}[ht]
\centering
\begin{tikzpicture}[
    x=1cm,
    y=1cm,
    line cap=round,
    line join=round,
    component/.style={line width=.85pt},
    marked/.style={
        circle,
        fill=black,
        inner sep=0pt,
        minimum size=2.6pt
    },
    maparrow/.style={
        line width=.75pt,
        ->
    }
]

\node[font=\small,align=center] at (2.2,5.20) {a general\\nodal curve};
\node[font=\small,align=center] at (8.1,5.20)
  {after forgetting $\infty$\\and stabilizing};

\begin{scope}[shift={(0.25,3.35)}]

  \draw[component] (1.50,0) circle (0.78);

  \draw[component] (2.92,0) circle (0.56);

  \fill (2.36,0) circle (1.6pt);

  \path (1.50,0) ++(155:0.78) coordinate (aA);
  \path (1.50,0) ++(78:0.78) coordinate (bA);
  \path (1.50,0) ++(230:0.78) coordinate (cA);

  \node[marked] at (aA) {};
  \node[marked] at (bA) {};
  \node[marked] at (cA) {};

  \node[anchor=east,xshift=-1pt] at (aA) {$a$};
  \node[anchor=south,yshift=2pt] at (bA) {$b$};
  \node[anchor=north,yshift=-1pt] at (cA) {$\cdots$};

  \path (2.92,0) ++(82:0.56) coordinate (pA);
  \path (2.92,0) ++(350:0.56) coordinate (infA);

  \node[marked] at (pA) {};
  \node[marked] at (infA) {};

  \node[anchor=south,yshift=2pt] at (pA) {$p$};
  \node[anchor=west,xshift=2pt] at (infA) {$\infty$};

\end{scope}

\draw[maparrow] (4.35,3.35) -- (5.75,3.35);
\node[font=\scriptsize,align=center] at (5.05,3.85)
  {forget $\infty$\\and stabilize};

\begin{scope}[shift={(6.25,3.35)}]

  \draw[component] (1.55,0) circle (0.95);

  \path (1.55,0) ++(155:0.95) coordinate (aB);
  \path (1.55,0) ++(85:0.95) coordinate (bB);
  \path (1.55,0) ++(10:0.95) coordinate (pB);
  \path (1.55,0) ++(230:0.95) coordinate (cB);

  \node[marked] at (aB) {};
  \node[marked] at (bB) {};
  \node[marked] at (pB) {};
  \node[marked] at (cB) {};

  \node[anchor=east,xshift=-1pt] at (aB) {$a$};
  \node[anchor=south,yshift=2pt] at (bB) {$b$};
  \node[anchor=west,xshift=2pt] at (pB) {$p$};
  \node[anchor=north,yshift=-1pt] at (cB) {$\cdots$};

\end{scope}

\node[font=\small,align=center] at (2.25,2.05)
  {the rooted caterpillar\\$x\in X_n$};
\node[font=\small,align=center] at (8.0,2.05)
  {the caterpillar\\$y=f(x)\in Y_n$};

\begin{scope}[shift={(0.10,0.30)}]

  \def\r{0.28}

  \draw[component] (0.95,0) circle (\r);
  \path (0.95,0) ++(215:\r) coordinate (one);
  \path (0.95,0) ++(85:\r) coordinate (two);
  \node[marked] at (one) {};
  \node[marked] at (two) {};
  \node[anchor=north east,xshift=-1pt,yshift=-1pt] at (one) {$1$};
  \node[anchor=south,yshift=2pt] at (two) {$2$};

  \draw[component] (1.51,0) circle (\r);

  \node at (2.25,0) {$\cdots$};

  \draw[component] (3.35,0) circle (\r);
  \path (3.35,0) ++(85:\r) coordinate (nmone);
  \node[marked] at (nmone) {};
  \node[anchor=south,yshift=2pt] at (nmone) {$n-1$};

  \draw[component] (3.91,0) circle (\r);
  \path (3.91,0) ++(85:\r) coordinate (nmark);
  \path (3.91,0) ++(0:\r) coordinate (infmark);
  \node[marked] at (nmark) {};
  \node[marked] at (infmark) {};
  \node[anchor=south,yshift=2pt] at (nmark) {$n$};
  \node[anchor=west,xshift=2pt] at (infmark) {$\infty$};

\end{scope}

\draw[maparrow] (4.95,0.30) -- (6.35,0.30);
\node[font=\scriptsize,align=center] at (5.35,0.82)
  {forget $\infty$\\and contract};

\begin{scope}[shift={(6.05,0.30)}]

  \def\r{0.28}

  \draw[component] (0.95,0) circle (\r);
  \path (0.95,0) ++(215:\r) coordinate (oneB);
  \path (0.95,0) ++(85:\r) coordinate (twoB);
  \node[marked] at (oneB) {};
  \node[marked] at (twoB) {};
  \node[anchor=north east,xshift=-1pt,yshift=-1pt] at (oneB) {$1$};
  \node[anchor=south,yshift=2pt] at (twoB) {$2$};

  \draw[component] (1.51,0) circle (\r);

  \node at (2.25,0) {$\cdots$};

  \draw[component] (3.35,0) circle (\r);
  \path (3.35,0) ++(120:\r) coordinate (nmoneB);
  \path (3.35,0) ++(40:\r) coordinate (nB);
  \node[marked] at (nmoneB) {};
  \node[marked] at (nB) {};
  \node[anchor=south east,xshift=-1pt,yshift=2pt] at (nmoneB) {$n-1$};
  \node[anchor=south west,xshift=1pt,yshift=2pt] at (nB) {$n$};

\end{scope}

\end{tikzpicture}
\caption{Forgetting the root marked point $\infty$.
Top: if a terminal component carries $\infty$ and one additional marked point
\(p\), then after forgetting $\infty$ that component becomes unstable and is
contracted. Bottom: for the rooted caterpillar \(x\in X_n\), the last component
carries \(n\) and \(\infty\); forgetting \(\infty\) contracts this component and
produces the ordinary caterpillar \(y=f(x)\in Y_n\).}
\label{fig:big-forgetting-root}
\end{figure}

Recall $q=q_{[1,n]}=q_{n-1}$, and set
\begin{equation}\label{eq:big-right-anchored}
 t_i^{\mathrm R}
 =q_{[i+1,n]}q_{[i,n]}q_{[i+1,n]}q_{[i+2,n]}
 \qquad(1\leq i<n),
\end{equation}
with a singleton or empty interval interpreted as the identity.  The cactus
relations and Theorem~\ref{thm:CGP} give
\begin{equation}\label{eq:big-anchor-conjugation}
 qt_iq=t_{n-i}^{\mathrm R}.
\end{equation}

\begin{lemma}\label{lem:big-projected-paths}
For $1\leq i<n$,
\[
 f_*(qt_iq)=f_*(t_{n-i});
 \qquad\text{equivalently,}\qquad
 f_*(t_i)=f_*(t_i^{\mathrm R}).
\]
\end{lemma}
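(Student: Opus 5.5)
Since $qt_iq=t^{\mathrm R}_{n-i}$ by \eqref{eq:big-anchor-conjugation}, the two forms of the statement are the same after replacing $i$ by $n-i$. I will prove $f_*(t_i)=f_*(t_i^{\mathrm R})$ directly in $J'_n=\pi_1^{\Sym_n}(Y_n,y)$. The plan is to exhibit both elements as explicit equivariant paths, with permutation part $(i,i+1)$, and show that after forgetting $\infty$ the underlying paths are homotopic rel endpoints in $Y_n$.

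First I compute the image of $t_i^{\mathrm R}$ geometrically. By the comparison theorem of \cite{GHR}, each $q_{[a,b]}$ is the interval-reversal path of Figure~\ref{fig:big-qab-path}: it bubbles off the block $[a,b]$, which lies at the right end of the caterpillar next to $\infty$. So $t_i^{\mathrm R}$ is the ``right-anchored'' analogue of $t_i$. The product $q_{[i+1,n]}q_{[i,n]}q_{[i+1,n]}q_{[i+2,n]}$ reverses the tails $[i+2,n]$ and $[i+1,n]$ and then $[i,n]$. These reversals cancel on everything except the adjacent pair $i,i+1$. Hence, up to homotopy in $X_n$, $t_i^{\mathrm R}$ is a path supported on the one-dimensional stratum in which the branches $L'=\{1,\dots,i-1\}\cup\{\infty\}$, $i$, $i+1$ and $R'=\{i+2,\dots,n\}$ move. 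In other words, it is the elementary caterpillar move on a caterpillar whose root sits among the left branches rather than the right.

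Concretely, $q_{[i,n]}$ first flips the whole tail of the caterpillar $x$ across the node separating $\{1,\dots,i-1,\infty\}$. The conjugation by $q_{[i+1,n]}$ and the correction by $q_{[i+2,n]}$ then bring the base point back to $x$, leaving net motion only on the $\overline M_{0,4}(\mathbb R)$ stratum with branches $L,i,i+1,R$. Compared with $t_i$, the difference is which of the three zero-dimensional points of this $\overline M_{0,4}(\mathbb R)$ the arc avoids, because the root $\infty$ now lies in the other branch.

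The key step is forgetting $\infty$. In $Y_n$ the relevant one-dimensional stratum has branches $L,i,i+1,R$ with $\infty$ erased, and $\infty$ was the only thing distinguishing the left from the right anchoring. The two arcs in $X_n$, $t_i$ and the reduced form of $t_i^{\mathrm R}$, differ by a loop around the $\overline M_{0,4}(\mathbb R)$ circle in the fibre direction of $f$, that is, by motion of $\infty$ alone. Since the fibres of $f$ are connected and the chamber closures (associahedra) are contractible, this loop becomes nullhomotopic in $Y_n$. Equivalently, $f$ maps both arcs onto the same edge of the $1$-skeleton of $Y_n$ joining $y$ to the caterpillar with $i,i+1$ exchanged. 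For $i=1$ and $i=n-1$, where the path $t_1$ is constant, I will check directly that $f_*(t^{\mathrm R}_{n-1})$ and $f_*(t_{n-1})$ are both the constant path with permutation $(n-1,n)$. This uses the fact that $n-1$ and $n$ lie on one end component of $y$.

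The main obstacle is the middle step: showing rigorously that the word $t_i^{\mathrm R}$ is homotopic to a single edge move with the root on the left. I would first try to do this by conjugating the \cite{GHR} identity for $t_i$ by $q$, which reflects the whole picture. This would avoid a direct cell-by-cell homotopy.
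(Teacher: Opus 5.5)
You have a genuine gap at the central step. You give two mutually inconsistent descriptions of $t_i^{\mathrm R}$ in $X_n$, and neither holds. The first is a single elementary move on the stratum with branches $L\cup\{\infty\}$, $i$, $i+1$, $R\setminus\{\infty\}$. The second is the complementary arc on the same $\overline M_{0,4}(\mathbb R)$ circle as $t_i$.

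Since $t_i^{\mathrm R}=qt_{n-i}q$, it is represented by three pieces in order:
the $q$-path $\gamma_q\subset\overline D$ from $x$ to $w_0x$; then the $w_0$-translate of the edge for $t_{n-i}$; then a translate of $\gamma_q$ ending at $s_ix$. The middle edge is an elementary move at $w_0x$, the caterpillar whose root sits next to $1$. For $2\le i\le n-2$ the point $x$ is not even in the closure of the stratum you name, since it has no node separating $\{1,\dots,i-1,\infty\}$ from $\{i,\dots,n\}$. In $X_n$ the pieces $\gamma_q$ do not cancel.

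Your second description would actually contradict the lemma. The map $f$ sends the $(L,i,i+1,R)$ circle homeomorphically onto the corresponding circle of $Y_n$, so this circle is transverse to the fibres of $f$, not in the fibre direction. For $n=4$, $i=2$ it is $D_{\{4,\infty\}}$, which maps onto all of $Y_4\cong S^1$, and the two complementary arcs have non-homotopic images. Connectedness of the fibres is therefore not the relevant input.

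What is missing is the symmetry that makes conjugation by $q$ a reflection, and this holds only after forgetting the root. The point $y=f(x)$ is fixed by $w_0$: contracting the component carrying $n,\infty$ leaves a caterpillar with cherries $\{1,2\}$ and $\{n-1,n\}$, and orders on real components are defined only up to reversal. Hence $f(w_0x)=y$. So $f(\gamma_q)$ is a loop at $y$ in the contractible closure of the chamber $(1,\dots,n)$ of $Y_n$, giving $f_*(q)=r:=(w_0,\mathrm{const}_y)$.

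Then $f_*(t_i^{\mathrm R})=r\,f_*(t_{n-i})\,r$, and the reflection of $y$ carries the edge $f(t_{n-i})$ onto the edge $f(t_i)$. At the ends, $f_*(t_1)$ and $f_*(t_{n-1})$ are the constant-path transpositions of the two cherries, which the reflection exchanges. This also covers $i=1$, which you leave unaddressed (you only treat $i=n-1$). There $t_1^{\mathrm R}=qt_{n-1}q$ is not a constant path in $X_n$; for $n=3$ it is the full loop around $X_3\cong S^1$.

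Your closing idea of conjugating by $q$ points the right way. But in $X_n$ conjugation by $q$ moves the root to the other end of the chain, and proving that it becomes a reflection in $Y_n$ is exactly the content of the lemma.
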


\begin{proof}
The crucial observation is that the unrooted caterpillar
$y=f(x)$ is fixed by the longest permutation $w_0(a)=n+1-a$.
Indeed, after forgetting $\infty$, the last component of
$x$ becomes unstable and is contracted. The resulting
caterpillar is symmetric under reversal of the chain:
reflecting the chain and reversing the order of special
points on each component gives an isomorphism of the
corresponding marked curves, since the order on each real
component is defined only up to reversal. Thus
\[
w_0y=y.
\]
Let $r=(w_0,\operatorname{const}_y)\in J'_n$ be the constant-path lift of $w_0$.

Since $w_0$ reverses the cyclic order $1,\ldots,n,\infty$, it maps the chamber $D$ to
itself, and the full cactus reversal $q=q_{[1,n]}$ is represented by a path from $x$ to
$w_0x$ contained in $\overline D$.
After forgetting $\infty$, this path becomes a loop
at $y$ contained in the closure of the corresponding
chamber of $\overline M_{0,n}(\mathbb R)$.
Since this chamber closure is contractible, the loop
is null-homotopic. Hence
\[
        f_*(q)=r.
\]

For $2\leq i\leq n-2$, the projected path $f(t_i)$ varies only the
four-pointed component with branches
\[
 L=\{1,\ldots,i-1\},\qquad i,\qquad i+1,\qquad
 R=\{i+2,\ldots,n\}.
\]
It joins the splittings
$(L,i)\mid(i+1,R)$ and $(L,i+1)\mid(i,R)$ along the arc of
$\overline M_{0,4}(\mathbb R)$ avoiding
$(L,R)\mid(i,i+1)$.  Reflection exchanges the two outer branches and, after
relabeling by $w_0$, carries this arc to the corresponding arc for $n-i$.
Therefore
\[
        r f_*(t_i)r=f_*(t_{n-i}).
\]
At the two ends, $f_*(t_1)$ is the constant-path transposition of the first
cherry, while $f(t_{n-1})$ becomes constant after the component carrying
$n,\infty$ is contracted; its equivariant permutation is the transposition of
the last cherry.  Reflection exchanges these two cases.  For $n=3$ the same
statement is simply the corresponding identity in $\Sym_3$.  Combining this
with $f_*(q)=r$ proves
\[
        f_*(qt_iq)=f_*(t_{n-i})
\]
for every $1\leq i<n$.  The equivalent form follows from
\eqref{eq:big-anchor-conjugation}.
\end{proof}

\subsection{The defining relations}

\begin{theorem}\label{thm:big-cactus}
For $n\geq3$, forgetting $\infty$ induces an isomorphism
\[
 \RC_n\xrightarrow{\sim}J'_n.
\]
Equivalently, the map~\eqref{eq:big-forgetful-map} is surjective and
\begin{equation}\label{eq:big-kernel}
 \ker f_*=
 \big\langle\!\big\langle
   qt_iq\,t_{n-i}:1\leq i<n
 \big\rangle\!\big\rangle.
\end{equation}
\end{theorem}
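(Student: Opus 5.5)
Lemma~\ref{lem:big-projected-paths} already gives $f_*(qt_iq)=f_*(t_{n-i})$, so the normal closure $N=\langle\!\langle qt_iq\,t_{n-i}\rangle\!\rangle$ lies in $\ker f_*$. Hence $f_*$ factors through $\RC_n=\Cn_n/N$. It remains to prove two things: that $f_*$ is surjective, and that $\ker f_*\subseteq N$.

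\emph{Surjectivity.} The map $f:X_n\to Y_n$ is a surjective map of connected spaces with connected fibres. The fibre over a point of $Y_n$ is the curve itself, a connected real nodal curve. So $f$ induces a surjection on ordinary $\pi_1$. The permutation maps of $\Cn_n$ and of $J'_n$ both land onto $\Sym_n$ and are compatible with $f_*$. Comparing the exact sequences \eqref{eq:big-equivariant-exact} for $X_n$ and $Y_n$, the five-lemma-type argument then gives that $f_*$ is surjective.

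\emph{Kernel.} I will use the mirabolic subgroup. Let $H=\pi^{-1}(\operatorname{Stab}(1))\subset\Cn_n$. The key geometric claim is that the subgroup of $J'_n$ lying over $\operatorname{Stab}(1)$ is identified with $H$ itself. The reason is that marking $1$ can play the role of a root: forgetting $\infty$ and relabelling $1$ as the new root gives an $\operatorname{Stab}(1)$-equivariant identification of $Y_n$ with $\overline M_{0,(n-1)+1}(\mathbb R)$. I would then like to show directly that $f_*|_H$ is injective modulo $N$, i.e.\ that $H\cap\ker f_*\subseteq N$.

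The cleanest route is to build a section. Define a homomorphism $s:J'_n\to \RC_n$ on the preimage of $\operatorname{Stab}(1)$ using the mirabolic presentation: by \eqref{eq:big-mirabolic-input}, $H$ is generated by the $q_{[a,b]}$ with $2\le a$ and by $t_2,\dots,t_{n-1}$. Here Lemma~\ref{lem:big-projected-paths} in the form $f_*(t_i)=f_*(t_i^{\mathrm R})$ is exactly what relates generators anchored at $1$ to generators anchored at $n$, i.e.\ at the root side. Concretely, I would show that $J'_n$ is generated by $f_*(H)$ together with the single element $f_*(t_1)$, and that the cosets of the preimage of $\operatorname{Stab}(1)$ match on both sides; both index sets are $n$, via the permutation map. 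An index/coset comparison then reduces injectivity of $\RC_n\to J'_n$ to injectivity on the subgroups over $\operatorname{Stab}(1)$.

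On those subgroups, use the identification of $\operatorname{Stab}(1)$-part of $J'_n$ with a cactus group $\Cn_{n-1}$, with $1$ as root. Write its relations (C1)--(C3) in terms of the generators of $H$. Then verify that each relation holds in $H/(H\cap N)$, using the Dynkin-reversal relations to convert the intervals ending at $n$ (equivalently $t_i^{\mathrm R}$) into ordinary interval reversals. This yields an inverse map, so $H\cap\ker f_*\subseteq N$.

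The case $n=3$ is checked directly: $J'_3$ is $\Sym_3$, since $\overline M_{0,3}$ is a point, and $\RC_3$ is $\Cn_3$ modulo the braid relation, which is also $\Sym_3$.

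The main obstacle is the last verification. One must identify explicitly which elements of $H$ correspond to the cactus generators of $\pi_1^{\operatorname{Stab}(1)}(Y_n)$ rerooted at $1$. Then one must check that the rerooted relations (C3), which involve intervals containing $n$, follow from the old cactus relations plus Dynkin reversal. I would first try to express the rerooted interval reversals as $q\,q_{[a,b]}\,q$ conjugates, combined with \eqref{eq:big-anchor-conjugation}, so that every rerooted relation becomes an old relation conjugated by $q$.
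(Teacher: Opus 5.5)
\textbf{The containment, surjectivity and $n=3$ parts are fine.} Your containment $N\subseteq\ker f_*$ is correct. So is your surjectivity argument: connected fibres plus properness of $f$ give surjectivity on $\pi_1$, and the two exact sequences then give surjectivity of $f_*$. The paper instead reads surjectivity off from a section. Your direct check for $n=3$ is also correct.

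\textbf{The injectivity part does not go through as written.} Your $H=\pi^{-1}(\operatorname{Stab}(1))$ is the mirabolic cactus group. It contains the whole pure subgroup $\pi_1(X_n)$, and therefore all of $\ker f_*$, since kernel elements have trivial permutation. This has three consequences. First, $H$ is not identified with the $\operatorname{Stab}(1)$-part $K'$ of $J'_n$: that would force $\ker f_*=1$, but the sign character with $\chi(t_1)=-1$ and $\chi(t_j)=1$ otherwise shows $qt_1q\,t_{n-1}\neq 1$ in $\Cn_n$. Second, the ``reduction'' to $H\cap\ker f_*\subseteq N$ is the full statement. Third, the coset comparison is unnecessary.

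The step you single out as the main obstacle is not one. With $1$ as root, $K'\cong\Cn_{n-1}$ is generated by the elements $f_*(q_{[a,b]})$, $2\le a<b\le n$, subject to the cactus relations on the interval $[2,n]$. These relations already hold among the $q_{[a,b]}$ in $\Cn_n$, including the intervals ending at $n$. So no Dynkin reversal is needed to define $s\colon K'\to\RC_n$. But a homomorphism $s$ with $\bar f\circ s=\id$ says nothing about $\ker\bar f$.

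\textbf{What is missing.} You need the other composite, $s\circ\bar f=\id$ on $\eta(H)$. This amounts to the generation statement
\[
\eta(H)=\eta(H'),\qquad H'=\langle q_{[a,b]}:2\le a<b\le n\rangle .
\]
This is where the Dynkin-reversal relations are actually used. By \eqref{eq:big-mirabolic-input}, $H$ is generated by $H'$ together with $t_2,\dots,t_{n-1}$. In $\RC_n$ one has $\eta(t_i)=\eta(t_i^{\mathrm R})$, and $t_i^{\mathrm R}\in H'$ for $i\ge 2$.

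You also need $f_*|_{H'}$ to be injective, i.e.\ that $f_*$ carries $H'$ isomorphically onto $K'$. The paper gets this from the geometric section $\sigma_n$ of $f$, which attaches a component carrying $n$ and $\infty$. It works with $\operatorname{Stab}(n)$ and the $q$-conjugate of the mirabolic statement precisely because $\sigma_n(y)=x$. A section at marking $1$ does not pass through the base point, so in your normalization you should conjugate the paper's statement by $q$.

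With these two facts the proof is two lines. An element of $\ker\bar f$ has trivial permutation, so it equals $\eta(h)$ with $h\in H'$ and $f_*(h)=1$. Injectivity of $f_*|_{H'}$ then gives $h=1$.
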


\begin{proof}
There is a section $\sigma_n:Y_n\to X_n$ of $f$: attach at the marking
$n$ a three-pointed component carrying $n$, $\infty$, and the node.
It is equivariant for $\Sym_{n-1}=\operatorname{Stab}_{\Sym_n}(n)$,
and $\sigma_n(y)=x$. Set
\[
 H=\langle q_{[a,b]}:1\leq a<b\leq n-1\rangle\subset\Cn_n,
 \qquad
 K=\pi_1^{\Sym_{n-1}}(Y_n,y)\subset J'_n.
\]
Regarding $n$ as the root identifies $K$ with $\Cn_{n-1}$.
The section identifies its image with the standard subgroup $H$.
Since $f\sigma_n=\id$, we have an isomorphism
\begin{equation}\label{eq:big-section-isomorphism}
 f_*|_H:H\xrightarrow{\sim}K.
\end{equation}
In particular, the image of $f_*$ contains the pure subgroup of $J'_n$
and projects onto $\Sym_n$, so $f_*$ is surjective.

By Lemma~\ref{lem:big-projected-paths}, write $f_*=\bar f\eta$, where
$\eta:\Cn_n\to\RC_n$ is the quotient map. Conjugating
\eqref{eq:big-mirabolic-input} by $q$ gives
\[
 P:=\pi^{-1}(\Sym_{n-1})
   =\langle H,t_1^{\mathrm R},\ldots,t_{n-2}^{\mathrm R}\rangle.
\]
In $\RC_n$ we have $\eta(t_i^{\mathrm R})=\eta(t_i)$.
Moreover, $t_i\in H$ for $i\leq n-2$ by Theorem~\ref{thm:CGP}.
Hence $\eta(P)=\eta(H)$.

The permutation map descends to $\RC_n$, since all the additional
relators have trivial permutation. If $g\in\ker\bar f$, any lift of
$g$ to $\Cn_n$ has trivial permutation and therefore belongs to $P$.
Thus $g=\eta(h)$ for some $h\in H$. Now $f_*(h)=\bar f(g)=1$, and
\eqref{eq:big-section-isomorphism} implies $h=1$. Therefore
$\bar f$ is injective.
\end{proof}

\begin{corollary}\label{cor:big-two-presentations}
The big cactus group has a presentation on $t_1,\ldots,t_{n-1}$ with
relations
\[
 \begin{gathered}
 t_i^2=1,\qquad t_it_j=t_jt_i\quad(|i-j|>1),\\
 (t_iq_{[j,k]})^2=1\quad(i+1<j<k),\\
 qt_iq=t_{n-i}\quad(1\leq i<n),
 \end{gathered}
\]
where $q_i$ and $q_{[j,k]}$ are the words of Theorem~\ref{thm:CGP}.
A second presentation is obtained by replacing the last family by
$t_w=q$ for every reduced word $w$ for $w_0\in\Sym_n$ adapted to an
orientation of $A_{n-1}$.
\end{corollary}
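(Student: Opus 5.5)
The corollary is obtained by assembling three earlier results: the $t_i$-presentation of the cactus group, the isomorphism of Theorem~\ref{thm:big-cactus}, and the equivalence of Theorem~\ref{thm:equivalence}.

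\emph{First presentation.} By Theorem~\ref{thm:CGP}, $\Cn_n$ is presented on $t_1,\dots,t_{n-1}$ by the involution relations $t_i^2=1$, the far-commutation relations, and the relations $(t_iq_{[j,k]})^2=1$ for $i+1<j<k$. Here $q_{[j,k]}$ means the explicit word $q_{k-1}q_{k-j}q_{k-1}$ in the $t$'s.

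By Definition~\ref{def:RC}, $\RC_n$ is the quotient of $\Cn_n$ by the normal closure of the elements $q\,t_iq\,t_{n-i}$. A presentation of a quotient by a normal closure is obtained by adding the listed relators to a presentation of the group. So $\RC_n$ is presented by the three displayed cactus families together with $qt_iq=t_{n-i}$. Since $q=q_{n-1}$ is also a word in the $t_i$, this is a presentation on the generators $t_1,\dots,t_{n-1}$ alone. Transporting it along the isomorphism $\RC_n\xrightarrow{\sim}J'_n$ of Theorem~\ref{thm:big-cactus}, valid for $n\ge3$, gives the first presentation. In $J'_n$ the generators are the images $f_*(t_i)$ of the caterpillar paths.

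\emph{Second presentation.} By Theorem~\ref{thm:equivalence}, inside $\Cn_n$ the Dynkin-reversal relations and the adapted relations $Q_D=q$ (over all Dynkin paths $D$) generate the same normal subgroup. Consequently, replacing one family by the other in the presentation leaves the quotient unchanged. It remains to translate $Q_D=q$ into the statement about words. By Lemma~\ref{lem:reduced-adapted}, the words representing $Q_D$ are exactly the reduced words for $w_0$ adapted to $\Omega_D$. By Lemma~\ref{lem:sources-exist}, every orientation of $A_{n-1}$ arises as some $\Omega_D$. Hence the family $\{Q_D=q\}$ coincides with the family $\{t_w=q\}$, where $w$ ranges over reduced words for $w_0$ adapted to some orientation. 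Distinct words adapted to the same orientation are commutation-equivalent, so they impose the same relation modulo the commutation relations. Including all of them is therefore harmless, and one could equally keep one word per orientation.

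\emph{Main obstacle.} None of these steps requires new work; the only care needed is bookkeeping. One must check that the relations in Theorem~\ref{thm:equivalence} are used modulo the full set of cactus relations, which are present in both presentations. One must also note that the case $n=3$ is included in Theorem~\ref{thm:big-cactus}, so the identification with $J'_n$ holds for all $n\ge3$.
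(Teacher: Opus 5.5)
Your proposal is correct and follows the paper's own proof. That proof simply combines the $t_i$-presentation of $\Cn_n$ from Theorem~\ref{thm:CGP} with the isomorphism $\RC_n\cong J'_n$ of Theorem~\ref{thm:big-cactus} and the equivalence of Theorem~\ref{thm:equivalence}, which is exactly what you do, and your bookkeeping is accurate, with one small wording fix: where you say ``the words representing $Q_D$'' you mean ``the knitting $w_0$-words for $D$'', which by Lemmas~\ref{lem:knitting-words} and~\ref{lem:reduced-adapted} are exactly the reduced words for $w_0$ adapted to $\Omega_D$.
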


\begin{proof}
Combine Theorem~\ref{thm:big-cactus} with
Theorems~\ref{thm:CGP} and~\ref{thm:equivalence}.
\end{proof}

\begin{remark}
Davis--Januszkiewicz--Scott give a geometric presentation of the big
cactus group in \cite[Theorem~8.6.2 and Remark~8.6.5]{DJS}, in terms
of polygonal subdivisions and dihedral symmetries (in their notation,
the group is $\operatorname{Aut}(X)$). The presentations above instead
use the elements $t_i$ and describe precisely the relations introduced
by forgetting a marking.
\end{remark}

\section{Invariant tensors}\label{sec:invariants}

We now prove Theorem~\ref{thm:inv-adapted}. The main step is the Dynkin-reversal relation on
invariants.

\begin{theorem}\label{thm:inv-dynkin}
On $\operatorname{Inv}(B_1\otimes\cdots\otimes B_n)$,
\[
q_{[1,n]}t_iq_{[1,n]}=t_{n-i},
\qquad 1\le i\le n-1.
\]
\end{theorem}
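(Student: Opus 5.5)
We would prove the relation in the equivalent form of Proposition~\ref{prop:first-kind-equiv}(c), using the crystal description of the cactus action due to Henriques and Kamnitzer. Throughout, $\xi$ denotes the Sch\"utzenberger involution of a normal crystal, and for a tensor product $B'\otimes B''$ the commutor is $\sigma(b'\otimes b'')=\xi_{B''\otimes B'}(\xi b''\otimes\xi b')$. The generator $q_{[a,b]}$ acts on $B_1\otimes\cdots\otimes B_n$ by applying the iterated reversal to the block of factors $a,\dots,b$. That block operation is $\xi_{B_a\otimes\cdots\otimes B_b}\circ(\xi_{B_b}\otimes\cdots\otimes\xi_{B_a})$, composed with the reversal of the factors.

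The first step is a formula for $q=q_{[1,n]}$ on invariants. An invariant $b$ spans a one-element connected component, so $\xi_{B_1\otimes\cdots\otimes B_n}(b)=b$. Hence
\[
q(b_1\otimes\cdots\otimes b_n)=\xi(b_n)\otimes\cdots\otimes\xi(b_1),
\]
which is also the formula claimed in Theorem~\ref{thm:inv-adapted}. Call this map $\Phi$. It is a bijection between the invariants of $B_1\otimes\cdots\otimes B_n$ and those of $B_n\otimes\cdots\otimes B_1$, and on invariants $\Phi$ is given by a formula that does not involve the global $\xi$.

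Next, by \eqref{eq:big-anchor-conjugation}, $qt_iq=t^{\mathrm R}_{n-i}$ already in $\Cn_n$. It therefore suffices to show
\[
t_j^{\mathrm R}=t_j\quad\text{on invariants, for all }j.
\]
Here $t_j$ is built from the left-anchored reversals $q_{[1,k]}$ (Theorem~\ref{thm:CGP}), and $t_j^{\mathrm R}$ from the right-anchored reversals $q_{[k,n]}$. So the heart of the argument is a comparison of left-anchored and right-anchored reversals on invariants.

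For this we split an invariant as $b=b'\otimes b''$, with $b'\in B_1\otimes\cdots\otimes B_k$ and $b''\in B_{k+1}\otimes\cdots\otimes B_n$. Invariance of $b$ forces $b'$ to be a highest weight element of some weight $\lambda$ and $b''$ to lie in a component of highest weight $\lambda^*$. More precisely, $b''$ is the lowest weight element of that component, paired with $b'$. On such elements $\xi_{B'}$ and $\xi_{B''}$ act by moving to the opposite extremal vector. We would then show that the block commutor of the two blocks, applied to $b$, equals
\[
q\,q_{[1,k]}\,q_{[k+1,n]}\,(b).
\]
This gives, on invariants, an identity relating $q_{[1,k]}$ acting on $b'$ to $q_{[n+1-k,n]}$ acting on the reversed-and-$\xi$-twisted tensor $\Phi(b)$. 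Feeding these identities for $k$ and $k\pm1$ into the expressions for $t_j$ and $t^{\mathrm R}_j$ should give $t^{\mathrm R}_j=t_j$, or equivalently the form $q_{[j,n]}=q^0_{[j,n]}$ of Proposition~\ref{prop:first-kind-equiv}(b). We would argue by induction on $n$, using (C3) to move between the two forms.

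The main obstacle is the compatibility between $\xi$ on a partial tensor $B_1\otimes\cdots\otimes B_k$ and the word reversal in the other block. Concretely, we need that $\Phi$ intertwines $q_{[1,k]}$ with $q_{[n+1-k,n]}$ on invariants. This is not a formal consequence of the cactus relations, since $\alpha$ does not descend to $\Cn_n$. If the direct crystal-theoretic comparison becomes unwieldy, the fallback is to work in type $A$ with Westbury's local rules \cite{Westbury18}. In that model an invariant is encoded as a rectangular-type growth diagram, or a tableau-like array, and $\Phi$ becomes rotation by $180^\circ$ combined with complementation. We would then repeat the argument of Lemma~\ref{lem:theta-conj}: rotation exchanges the local rule for $t_i$ with the one for $t_{n-i}$, exactly as it exchanged free entries for $t_i$ and $t_{n-i}$ on rectangular tableaux.
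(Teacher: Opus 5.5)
Your first two steps agree with the paper: the formula $q_{[1,n]}=\Phi$ on invariants is Proposition~\ref{prop:inv-reversal}, and the reduction to $t_j=t^{\mathrm R}_j$ is Lemma~\ref{lem:inv-local}. There are two small slips. The global $\xi$ in your formula for $q_{[1,n]}$ is applied to $\xi(b_n)\otimes\cdots\otimes\xi(b_1)$, so what you need is that this tensor is invariant, not that $b$ is. Also, with the paper's tensor convention it is $b''$ that is highest weight and $b'$ that is lowest weight.

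The gap is in the core step, because the identities you plan to prove are formal:
\begin{itemize}
\item $\sigma_{X,Y}=q_{[1,n]}q_{[1,k]}q_{[k+1,n]}$ is just \eqref{eq:inv-sigma-positions}.
\item The intertwining $\Phi\circ q_{[1,k]}=q_{[n+1-k,n]}\circ\Phi$ on invariants, which you single out as the main obstacle, follows at once from $q_{[1,n]}|_{\operatorname{Inv}}=\Phi$ and (C3), $q_{[1,n]}q_{[1,k]}q_{[1,n]}=q_{[n+1-k,n]}$.
\end{itemize}
Feeding such identities into $t_j$ only reproduces $q_{[1,n]}t_jq_{[1,n]}=t^{\mathrm R}_{n-j}$, which already holds in $\Cn_n$. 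Nothing in this line compares a left-anchored word with a right-anchored word on the same tensor, and that comparison is the entire content of the theorem. It is not formal even for $n=3$. There it says that the cactus action on invariants of a triple tensor product factors through $\Sym_3$, i.e.\ $(q_{[1,2]}q_{[1,3]})^3=1$ on invariants, whereas $\Cn_3$ is infinite dihedral.

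Your fallback is where a real argument can be made, but as stated it assumes the conclusion for general factors. For arbitrary $B(\lambda_j)$, the local rule $\tau^A_{B,C}$ has no explicit description in which $180^\circ$ rotation visibly carries it to the rule for $t_{n-i}$. That assertion is exactly $\tau^A_{B,C}=\bar\tau^{\,D}_{B,C}$ on invariants. The argument of Lemma~\ref{lem:theta-conj} applies only when $t_i$ acts as an honest Bender--Knuth involution on rectangular tableaux, i.e.\ for single-box (or single-row) factors. Even there you need the RSK dictionary of Lemma~\ref{lem:inv-dictionary}: $q_{[1,r]}$ leaves the insertion tableau of the first $r$ letters unchanged and evacuates the entries $1,\dots,r$ of $Q(u)$.

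The missing idea is the reduction from arbitrary factors to this atomic case, which the paper does as follows:
\begin{itemize}
\item Embed each $B(\lambda_j)$ into $B(\varpi_1)^{\otimes|\lambda_j|}$ by row reading.
\item Use naturality of the commutor for inclusions of components (Lemma~\ref{lem:inv-natural}).
\item Use Westbury's factorisation (Lemma~\ref{lem:inv-factor}, Corollary~\ref{cor:inv-LR}) to write $\tau^A_{B,C}$ and $\bar\tau^{\,D}_{B,C}$ as the same chronological sequence of adjacent box crossings, given by $t_j$ and $t'_j$ respectively.
\item Since each crossing preserves invariants, the atomic relation (Proposition~\ref{prop:inv-atomic}) then propagates one crossing at a time (Lemma~\ref{lem:inv-propagation}).
\end{itemize}
Alternatively, the paper's second proof avoids the combinatorics and works in every semisimple type. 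The Gaudin eigenline covering over $\overline M_{0,n+1}(\mathbb R)$ descends along the root-forgetting map (Lemma~\ref{lem:inv-gaudin-descent}), so the action on invariants factors through $J'_n\cong\RC_n$.
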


Theorem~\ref{thm:inv-adapted} follows from Theorem~\ref{thm:equivalence} and
Proposition~\ref{prop:inv-reversal}: the Dynkin-reversal relations give
$Q_D=q_{[1,n]}$ for every Dynkin path $D$, while $q_{[1,n]}$ acts on invariants by
\[
b_1\otimes\cdots\otimes b_n
\longmapsto
\xi(b_n)\otimes\cdots\otimes\xi(b_1).
\]

For the combinatorial proof of Theorem~\ref{thm:inv-dynkin} in type $A$, we refine the factors into single boxes and factor the two local rules into the same sequence of pairwise box interchanges. Proposition~\ref{prop:inv-atomic} then gives the result one crossing at a time. Section~\ref{subsec:inv-topological} gives a second proof, valid in every semisimple type.

\subsection{Crystals, the Sch\"utzenberger involution, and the commutor}\label{sec:inv-crystals}

Fix $\mathfrak{sl}_m$. A (finite, normal) \emph{crystal} is a finite set $B$ equipped with a weight
map $\wt\colon B\to\ZZ^m$ and partial operators
\[
e_i,f_i\colon B\to B,\qquad 1\le i<m,
\]
such that $f_i(b)=b'$ if and only if $e_i(b')=b$, and
\[
\wt(f_i b)=\wt(b)-\alpha_i.
\]
We will only consider normal crystals, so every connected component is the crystal $B(\lambda)$
of an irreducible representation. The standard model is
$
B(\lambda)=\SSYT(\lambda,m),
$
and in particular $B(\varpi_1)=\{1,\dots,m\}$. We write
\[
\varepsilon_i(b)=\max\{k:e_i^k b\text{ is defined}\},
\qquad
\varphi_i(b)=\max\{k:f_i^k b\text{ is defined}\}.
\]
By a \emph{morphism} of crystals we mean a \emph{strict morphism}:
a map $g\colon B\to B'$ satisfying
\[
\wt(g(b))=\wt(b),\qquad
g(e_i b)=e_i g(b),\qquad
g(f_i b)=f_i g(b)
\]
for every $b\in B$ and every simple root index $i$.
Here we adjoin a symbol $0$ to each crystal, interpret every undefined
crystal operator as having value $0$, and extend $g$ by $g(0)=0$.
Thus the identities above require preservation of undefinedness as well.
The image of a morphism is a union of connected components of $B'$;
in particular, an injective morphism identifies $B$ with such a union.
Thus, if $g\colon B\to B'$ is an injective morphism, its image is a union of connected components of $B'$.

An element is \emph{highest weight} if every $e_i$ is undefined, and \emph{lowest weight} if every
$f_i$ is undefined. We call it \emph{invariant} if it is both, or equivalently if it forms a
one-element connected component.

\begin{definition}[tensor product]\label{conv:tensor}
The tensor product $B\otimes C$ is the set $B\times C$, with
\[
\wt(b\otimes c)=\wt(b)+\wt(c),
\]
and
\[
f_i(b\otimes c)=
\begin{cases}
f_i(b)\otimes c,&\varepsilon_i(b)\ge\varphi_i(c),\\
b\otimes f_i(c),&\text{otherwise,}
\end{cases}
\]
while
\[
e_i(b\otimes c)=
\begin{cases}
e_i(b)\otimes c,&\varepsilon_i(b)>\varphi_i(c),\\
b\otimes e_i(c),&\text{otherwise.}
\end{cases}
\]
This convention is associative, and in a highest weight element of $B\otimes C$ the second factor
is highest weight.
\end{definition}

The ordinary flip
$
b\otimes c\longmapsto c\otimes b
$
is not a crystal morphism $B\otimes C\to C\otimes B$, although the two crystals are isomorphic.
For an irreducible crystal $B(\lambda)$, the \emph{Sch\"utzenberger involution} $\xi$ is the unique
bijection satisfying
\[
\xi(b_\lambda)=b_\lambda^{\mathrm{low}},
\qquad
\xi(e_i b)=f_{m-i}\xi(b),
\]
where $b_\lambda$ and $b_\lambda^{\mathrm{low}}$ are the highest and lowest weight elements.
It is an involution, and on $\SSYT(\lambda,m)$ it is the usual Sch\"utzenberger evacuation
\cite{HK,LascouxSchutzenberger}; by Lemma~\ref{lem:q-is-evac}, this is the Bender--Knuth word $q_{m-1}$. For a general finite crystal, we define $\xi$ componentwise.

The Sch\"utzenberger involution is natural with respect to crystal isomorphisms:
\[
g\circ\xi_B=\xi_{B'}\circ g
\]
for every isomorphism $g\colon B\to B'$. Indeed, $g\xi_Bg^{-1}$ sends the highest weight element
of each component to its lowest weight element and intertwines $e_i$ with $f_{m-i}$, so it must
equal $\xi_{B'}$. In particular, $\xi$ fixes every invariant, since on a one-element component
the highest and lowest weight elements are the same.

\begin{definition}[Henriques--Kamnitzer \cite{HK}]\label{def:commutor}
The \emph{crystal commutor}
\[
\sigma_{B,C}\colon B\otimes C\longrightarrow C\otimes B
\]
is defined by
\[
\sigma_{B,C}(b\otimes c)
=
\xi_{C\otimes B}\bigl(\xi_C(c)\otimes\xi_B(b)\bigr).
\]
\end{definition}

\begin{example}
For $\mathfrak{sl}_2$ write $B(1)=\{1,2\}$ and $B(2)=\{11,12,22\}$, with $\xi(1)=2$,
$\xi(11)=22$ and $\xi(12)=12$. The crystal $B(1)\otimes B(2)$ has components
\[
  \{\,1\otimes11,\ 1\otimes12,\ 1\otimes22,\ 2\otimes22\,\}\cong B(3),\qquad
  \{\,2\otimes11,\ 2\otimes12\,\}\cong B(1),
\]
and $B(2)\otimes B(1)$ has components
$\{11\otimes1,\,11\otimes2,\,12\otimes2,\,22\otimes2\}\cong B(3)$ and
$\{12\otimes1,\,22\otimes1\}\cong B(1)$. Unwinding Definition~\ref{def:commutor}, for instance
$\sigma(1\otimes22)=\xi_{B(2)\otimes B(1)}(\xi(22)\otimes\xi(1))=\xi(11\otimes2)=12\otimes2$, one finds
\[
\begin{array}{c|cccccc}
 b\otimes c & 1\otimes11 & 1\otimes12 & 1\otimes22 & 2\otimes22 & 2\otimes11 & 2\otimes12\\ \hline
 \sigma(b\otimes c) & 11\otimes1 & 11\otimes2 & 12\otimes2 & 22\otimes2 & 12\otimes1 & 22\otimes1
\end{array}
\]
Compare the flip: it sends $1\otimes22$ to $22\otimes1$, which lies in the wrong component. Here
$B(1)\otimes B(2)$ is multiplicity-free, so $\sigma$ is the \emph{only} isomorphism
$B(1)\otimes B(2)\to B(2)\otimes B(1)$; in general there are several, and the content of
Definition~\ref{def:commutor} is the choice of one.
\end{example}

Thus the ordinary flip is corrected by applying the Sch\"utzenberger involution to the two
factors, and then once more to the resulting tensor product. The two applications of $\xi$
reverse the crystal operators twice, so the resulting map is a crystal isomorphism. We will use
the following properties, proved in \cite{HK}.

\begin{proposition}\label{prop:coboundary}
\begin{enumerate}
\item[(a)] The commutor is involutive:
\[
\sigma_{C,B}\circ\sigma_{B,C}=\id.
\]

\item[(b)] It is natural with respect to isomorphisms and inclusions of unions of connected
components. Thus, if $g\colon B\to B'$ and $h\colon C\to C'$ are maps of either kind, then
\[
\sigma_{B',C'}\circ(g\otimes h)
=
(h\otimes g)\circ\sigma_{B,C}.
\]

\item[(c)] It satisfies the cactus axiom:
\[
\sigma_{B,\,D\otimes C}\circ(1_B\otimes\sigma_{C,D})
=
\sigma_{C\otimes B,\,D}\circ(\sigma_{B,C}\otimes1_D).
\]

\item[(d)] In general it does not satisfy the hexagon axiom
\[
\sigma_{B\otimes C,D}
=
(\sigma_{B,D}\otimes1_C)\circ(1_B\otimes\sigma_{C,D}).
\]
\end{enumerate}
\end{proposition}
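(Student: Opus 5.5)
We prove the four properties by rewriting $\sigma$ as a composite of two ``anti-isomorphisms''. Put
\[
\tau_{B,C}\colon B\otimes C\to C\otimes B,\qquad b\otimes c\longmapsto \xi_C(c)\otimes\xi_B(b),
\]
so that $\sigma_{B,C}=\xi_{C\otimes B}\circ\tau_{B,C}$. Call a bijection $g$ an \emph{anti-isomorphism} if
\[
\wt(g(b))=w_0\wt(b),\qquad g\circ e_i=f_{m-i}\circ g,\qquad g\circ f_i=e_{m-i}\circ g,
\]
with undefined values matching. On each irreducible component, $\xi$ is the unique anti-automorphism sending the highest weight element to the lowest one. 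Two consequences follow. First, a composite of two anti-isomorphisms is a crystal isomorphism. Second, if $g$ is an anti-isomorphism then $g\xi g^{-1}$ is again an anti-automorphism exchanging highest and lowest weight elements componentwise, so $g\circ\xi_B=\xi_{B'}\circ g$, exactly as in the naturality argument for isomorphisms given before Definition~\ref{def:commutor}.

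The first step, and the only place where the asymmetric tie-breaking of Definition~\ref{conv:tensor} matters, is to check that $\tau_{B,C}$ is an anti-isomorphism. Since $\xi$ is an anti-isomorphism on each factor,
\[
\varepsilon_{m-i}(\xi c)=\varphi_i(c),\qquad \varphi_{m-i}(\xi b)=\varepsilon_i(b).
\]
By the tensor rule, $e_{m-i}(\xi c\otimes\xi b)$ acts on the left factor $\xi c$ exactly when $\varphi_i(c)>\varepsilon_i(b)$. Hence it acts on $\xi b$ exactly when $\varepsilon_i(b)\ge\varphi_i(c)$, which is precisely the condition for $f_i(b\otimes c)$ to act on $b$. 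The case of $f_{m-i}$ versus $e_i$ is identical, with the strict and weak inequalities interchanged, and the weight condition is clear. This verification is the main obstacle, in the sense that everything else is formal. Given it, $\sigma_{B,C}$ is a crystal isomorphism, and moreover $\sigma_{B,C}=\xi_{C\otimes B}\tau_{B,C}=\tau_{B,C}\xi_{B\otimes C}$.

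For (a), note that $\tau_{C,B}\tau_{B,C}=\id$ because each $\xi$ is an involution. Then
\[
\sigma_{C,B}\sigma_{B,C}
=\tau_{C,B}\,\xi_{C\otimes B}\,\xi_{C\otimes B}\,\tau_{B,C}
=\tau_{C,B}\tau_{B,C}=\id .
\]
For (b), the involution $\xi$ is defined componentwise, so it commutes with isomorphisms and with inclusions of unions of components. Tensor products of such maps are again maps of the same kind, and the flip is natural. Hence each of the three factors in the definition of $\sigma$ is natural.

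For (c), compute both sides on $b\otimes c\otimes d$. On the left, $1_B\otimes\sigma_{C,D}$ gives $b\otimes\xi_{D\otimes C}(\xi d\otimes\xi c)$. Applying $\sigma_{B,D\otimes C}$ then applies $\xi_{D\otimes C}$ to the second tensor factor, which cancels the inner $\xi_{D\otimes C}$, and so yields $\xi_{D\otimes C\otimes B}(\xi d\otimes\xi c\otimes\xi b)$. The right-hand side reduces to the same expression in the same way, using $\xi_{C\otimes B}^2=\id$. Both sides therefore equal the ``total reversal'' $\xi_{D\otimes C\otimes B}\circ(\xi_D\otimes\xi_C\otimes\xi_B)\circ(\text{reverse factors})$.

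For (d), it suffices to exhibit one failure. We take $\mathfrak{sl}_2$ with $B=C=D=B(1)=\{1,2\}$, tabulate $\sigma_{B(1),B(1)}$ and $\sigma_{B(1)\otimes B(1),B(1)}$ from Definition~\ref{def:commutor}, as in the example above, and compare both sides of the hexagon on an element of the two-dimensional multiplicity space of $B(1)$ in $B(1)^{\otimes3}$, such as $2\otimes1\otimes1$, expecting a discrepancy. If this small case happens to satisfy the identity, we pass to $\mathfrak{sl}_3$ with $B(\varpi_1)^{\otimes3}$. There the hexagon together with (a) would force an action of $\Sym_3$ through the braid relation, which is incompatible with the known cactus, not symmetric, action on the invariant $1\otimes2\otimes3$-type highest weight elements.
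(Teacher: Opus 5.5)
The paper does not prove this proposition; it cites Henriques--Kamnitzer and only remarks that (b) extends to inclusions of unions of components because $\xi$ is defined componentwise.

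\textbf{Parts (a)--(c).} Your argument is correct and is essentially the Henriques--Kamnitzer one. The only non-formal input is that $\tau_{B,C}\colon b\otimes c\mapsto\xi(c)\otimes\xi(b)$ intertwines $e_i,f_i$ with $f_{m-i},e_{m-i}$. Your check of this against the paper's tie-breaking is right: the strict and weak inequalities trade places exactly as needed. Naturality of $\xi$ under anti-isomorphisms then gives $\sigma_{B,C}=\xi_{C\otimes B}\tau_{B,C}=\tau_{B,C}\xi_{B\otimes C}$, and (a), (b), (c) follow formally as you describe. What your version adds over the citation is a confirmation that these properties hold with the paper's own tensor convention.

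\textbf{Part (d) is not proved as written.} You stop at ``expecting a discrepancy'' and offer a fallback that is only a heuristic. Appealing to a cactus action ``known'' not to factor through $\Sym_3$ presupposes essentially what (d) asserts. Moreover, $\operatorname{Inv}(B(\varpi_1)^{\otimes3})$ for $\mathfrak{sl}_3$ is a single element, so nothing could be detected on invariants there.

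Your first test case does work, and the computation is short. With the paper's convention, $B(1)\otimes B(1)$ has components $\{1\otimes1,\,1\otimes2,\,2\otimes2\}\cong B(2)$ and $\{2\otimes1\}\cong B(0)$. This crystal is multiplicity-free and $\sigma_{B(1),B(1)}$ is a crystal isomorphism, so $\sigma_{B(1),B(1)}=\mathrm{id}$. Hence the right-hand side of the hexagon is the identity of $B(1)^{\otimes3}$.

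On the other hand, by the tensor rule $e(2\otimes2\otimes1)=1\otimes2\otimes1$, while $e(1\otimes2\otimes1)$ and $f(2\otimes2\otimes1)$ are undefined. So $\{1\otimes2\otimes1,\,2\otimes2\otimes1\}$ is a component isomorphic to $B(1)$, with highest weight element $1\otimes2\otimes1$. Therefore
\[
\sigma_{B(1)\otimes B(1),\,B(1)}(2\otimes1\otimes1)
=\xi\bigl(\xi(1)\otimes\xi(2\otimes1)\bigr)
=\xi(2\otimes2\otimes1)
=1\otimes2\otimes1
\neq 2\otimes1\otimes1 .
\]
Adding this computation completes (d).
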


Henriques and Kamnitzer state (b) for isomorphisms. The extension to inclusions of unions of
connected components follows immediately from the definition, since the Sch\"utzenberger
involution is defined componentwise.

A monoidal category with a natural involutive commutor satisfying the cactus axiom is called a
\emph{coboundary category}. Thus crystals form a coboundary category, but not a symmetric one.

The cactus axiom says, in particular, that there is a canonical way to reverse any number of
tensor factors. For three factors, reversing $C,D$ and then moving $B$ to the end gives the same
map as reversing $B,C$ and then moving $D$ to the front. Iterating this construction gives a
canonical full reversal
\[
\rho\colon X_1\otimes\cdots\otimes X_k
\longrightarrow
X_k\otimes\cdots\otimes X_1.
\]
If
\[
X=X_1\otimes\cdots\otimes X_j,
\qquad
Y=X_{j+1}\otimes\cdots\otimes X_k,
\]
then
\begin{equation}\label{eq:inv-rho}
\rho_{X\otimes Y}
=
(\rho_Y\otimes\rho_X)\circ\sigma_{X,Y}.
\end{equation}
By the cactus axiom this is independent of the choice of splitting. Moreover, $\rho$ is an
involution, and for two factors $\rho_{B\otimes C}=\sigma_{B,C}$.

\subsection{The cactus group action}\label{sec:inv-action}

The cactus group $\Cn_n$ acts on tensor products of $n$ crystals. The generator $q_{[i,j]}$
acts by the full reversal $\rho$ on the factors in positions $i,\dots,j$, and as the identity on
the remaining factors. Thus it maps
$
B_1\otimes\cdots\otimes B_n
$
to the tensor product in which the factors $B_i,\dots,B_j$ appear in reverse order. More
precisely, this is an action on the groupoid of all orderings of the factors; when all $B_i$ are
equal, it is an ordinary action on a single crystal.

The cactus relations (C1)--(C3) follow from the properties of the commutor \cite{HK}. Relation
(C1) follows because $\rho$ is an involution, and (C2) because reversals of disjoint intervals act
on disjoint factors. For (C3), reversing $[i,j]$ carries the interval $[k,l]$ to
$
[i+j-l,\,i+j-k],
$
and the independence of the splitting in \eqref{eq:inv-rho} shows that conjugating the reversal of
$[k,l]$ gives the reversal of this new interval. In particular, the elements $t_i\in\Cn_n$ from
Theorem~\ref{thm:CGP} act on tensor products, with
\[
t_i^2=1,
\qquad
t_it_j=t_jt_i\quad\text{for }|i-j|>1.
\]

We will also need to express the commutor itself in terms of interval reversals. Suppose that
$X$ occupies positions $[p,r']$ and $Y$ occupies positions $[r'+1,r]$. Then
\begin{equation}\label{eq:inv-sigma-positions}
\sigma_{X,Y}
=
q_{[p,\,p+r-r'-1]}\,
q_{[p+r-r',\,r]}\,
q_{[p,r]}
=
q_{[p,r]}\,
q_{[p,r']}\,
q_{[r'+1,r]}.
\end{equation}
The first expression reverses the whole block and then reverses $X$ and $Y$ separately in their
new positions; the second reverses the two blocks first and then reverses the whole interval.
A one-element interval is understood to act trivially.

\begin{lemma}[naturality of the action]\label{lem:inv-natural}
Suppose that each $g_i\colon B_i\to B_i'$ is an isomorphism or an inclusion of a union of
connected components, and let $w\in\Cn_n$ permute the positions by $\pi$. Then
\[
w\circ(g_1\otimes\cdots\otimes g_n)
=
(g_{\pi^{-1}(1)}\otimes\cdots\otimes g_{\pi^{-1}(n)})\circ w.
\]
\end{lemma}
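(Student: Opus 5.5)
The plan is to reduce to the generators $q_{[i,j]}$ and then use naturality of the commutor. Every $w\in\Cn_n$ is a product of generators. Both sides of the identity are compatible with composition: if it holds for $w_1$ (permutation $\pi_1$) and $w_2$ (permutation $\pi_2$), then
\[
w_1w_2\circ(g_1\otimes\cdots\otimes g_n)
=w_1\circ(g_{\pi_2^{-1}(1)}\otimes\cdots)\circ w_2
=(g_{(\pi_1\pi_2)^{-1}(1)}\otimes\cdots)\circ w_1w_2.
\]
The permutation of $w_1w_2$ is $\pi_1\pi_2$, since the action is on the groupoid of orderings. So it suffices to treat a single generator $q_{[i,j]}$.

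On the factors outside $[i,j]$ the generator acts as the identity, so those factors commute trivially with the $g_k$. It remains to show that the full reversal $\rho$ on $X_i\otimes\cdots\otimes X_j$ is natural with respect to tensor products of maps of the two allowed kinds. I would prove this by induction on the number $k=j-i+1$ of factors, using \eqref{eq:inv-rho}.

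For $k=1$ the map $\rho$ is the identity. For $k=2$, $\rho=\sigma$ and the claim is exactly Proposition~\ref{prop:coboundary}(b). For $k>2$, split the block as $X\otimes Y$ and write $\rho_{X\otimes Y}=(\rho_Y\otimes\rho_X)\circ\sigma_{X,Y}$. Put $G=g_i\otimes\cdots\otimes g_{r'}$ and $H=g_{r'+1}\otimes\cdots\otimes g_j$.

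The step that needs care is applying Proposition~\ref{prop:coboundary}(b) to the composite maps $G$ and $H$. For this I need that a tensor product of isomorphisms and inclusions of unions of connected components is again a map of one of these kinds, in a generalized sense: an injective strict morphism whose image is a union of components.

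Tensor products of crystal isomorphisms are isomorphisms. Tensor products of injective strict morphisms are injective strict morphisms, because the tensor rule depends only on $\varepsilon_i,\varphi_i$, and these are preserved. An injective strict morphism has image a union of components, as noted after the definition of morphisms. So $G$ and $H$ are inclusions of unions of components, possibly composed with isomorphisms, and Proposition~\ref{prop:coboundary}(b) applies to them. Naturality of the commutor is stated for isomorphisms and inclusions. A composite of an isomorphism onto its image followed by an inclusion is handled by applying (b) twice.

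With this in hand the induction is immediate:
\[
\rho_{X'\otimes Y'}\circ(G\otimes H)
=(\rho_{Y'}\otimes\rho_{X'})\circ(H\otimes G)\circ\sigma_{X,Y}
=(\tilde H\otimes\tilde G)\circ\rho_{X\otimes Y}.
\]
Here $\tilde H$ and $\tilde G$ are the reversed tensor products, obtained from $H$ and $G$ by the inductive hypothesis applied to $\rho_Y$ and $\rho_X$. This is exactly the permuted tensor product $g_{\pi^{-1}(1)}\otimes\cdots\otimes g_{\pi^{-1}(n)}$ for $\pi=w_{[i,j]}$, which completes the proof.
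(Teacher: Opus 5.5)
Your proof is correct and takes the same approach as the paper, whose proof is the one-line observation that each interval reversal is built from commutors and identity maps via \eqref{eq:inv-rho}, and that the commutor is natural. Your reduction to generators, your induction on block length, and your check that tensor products of isomorphisms and component inclusions are again of this kind (so that Proposition~\ref{prop:coboundary}(b) applies to the composite maps $G$ and $H$) spell out details the paper leaves implicit.
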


\begin{proof}
Each interval reversal is obtained from commutors and identity maps using
\eqref{eq:inv-rho}, and the commutor is natural.
\end{proof}

\begin{example}
For $n=4$ and $i=2$, the four factors of
$
t_2=q_{[1,2]}q_{[1,3]}q_{[1,2]}q_{[1,1]}
$
act, from right to left, as
\[
  B_1B_2B_3B_4
  \xrightarrow{\,q_{[1,1]}\,}
  B_1B_2B_3B_4
  \xrightarrow{\,q_{[1,2]}\,}
  B_2B_1B_3B_4
  \xrightarrow{\,q_{[1,3]}\,}
  B_3B_1B_2B_4
  \xrightarrow{\,q_{[1,2]}\,}
  B_1B_3B_2B_4.
\]
Thus $B_2$ first moves past $B_1$ to the front, and then past $B_1\otimes B_3$ to the back.

Similarly, the right-anchored element
$
t'_2=q_{[3,4]}q_{[2,4]}q_{[3,4]}q_{[4,4]}
$
acts by
\[
B_1B_2B_3B_4
\xrightarrow{\,q_{[4,4]}\,}
B_1B_2B_3B_4
\xrightarrow{\,q_{[3,4]}\,}
B_1B_2B_4B_3
\xrightarrow{\,q_{[2,4]}\,}
B_1B_3B_4B_2
\xrightarrow{\,q_{[3,4]}\,}
B_1B_3B_2B_4.
\]
Here $B_3$ first moves past $B_4$ to the back, and then past $B_2\otimes B_4$ to the front.
Both maps therefore interchange $B_2$ and $B_3$, ending at $B_1B_3B_2B_4$.
Theorem~\ref{thm:inv-dynkin} says that on invariants these two maps are equal.
\end{example}

\begin{proposition}[reversal on invariants]\label{prop:inv-reversal}
On $\operatorname{Inv}(B_1\otimes\cdots\otimes B_n)$, the full reversal $q_{[1,n]}$ acts by
\[
b_1\otimes\cdots\otimes b_n
\longmapsto
\xi(b_n)\otimes\cdots\otimes\xi(b_1).
\]
\end{proposition}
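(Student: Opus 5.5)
Since $q_{[1,n]}$ acts by the full reversal $\rho$, the claim is that $\rho(b_1\otimes\cdots\otimes b_n)=\xi(b_n)\otimes\cdots\otimes\xi(b_1)$ whenever $b_1\otimes\cdots\otimes b_n$ is invariant. I will prove this by induction on $n$, using the recursive description \eqref{eq:inv-rho} of $\rho$. For $n=1$ the statement is that $\xi$ fixes an invariant element of $B_1$, which holds because $\xi$ fixes every one-element component. For $n=2$, $\rho=\sigma_{B,C}$. Definition~\ref{def:commutor} gives $\sigma(b\otimes c)=\xi_{C\otimes B}(\xi(c)\otimes\xi(b))$, so it suffices to show that $\xi(c)\otimes\xi(b)$ is invariant in $C\otimes B$; then $\xi_{C\otimes B}$ fixes it.

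The key general fact I will prove first is the following. \emph{Let $X,Y$ be tensor products (or any normal crystals), $x\in X$, $y\in Y$. If $x\otimes y$ is invariant, then $\xi(y)\otimes\xi(x)$ is invariant in $Y\otimes X$.} By the tensor rule, $x\otimes y$ is invariant iff for every $i$ we have $e_i$ and $f_i$ undefined. For $e_i$ this means $\varepsilon_i(x)\le\varphi_i(y)$ and $\varepsilon_i(y)=0$. For $f_i$ this means $\varepsilon_i(x)\ge\varphi_i(y)$ and $\varphi_i(x)=0$. Hence the conditions are $\varepsilon_i(y)=0=\varphi_i(x)$ and $\varepsilon_i(x)=\varphi_i(y)$ for all $i$.

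Next I need how $\xi$ transforms these statistics. Since $\xi$ intertwines $e_i$ with $f_{m-i}$, we get $\varepsilon_i(\xi b)=\varphi_{m-i}(b)$ and $\varphi_i(\xi b)=\varepsilon_{m-i}(b)$. Now check the conditions for $\xi(y)\otimes\xi(x)$ at index $i$. First, $\varepsilon_i(\xi x)=\varphi_{m-i}(x)=0$. Second, $\varphi_i(\xi y)=\varepsilon_{m-i}(y)=0$. Third, $\varepsilon_i(\xi y)=\varphi_{m-i}(y)=\varepsilon_{m-i}(x)=\varphi_i(\xi x)$. These are exactly the invariance conditions, so the general fact holds. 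This settles $n=2$.

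For the inductive step, split the factors as $X=B_1\otimes\cdots\otimes B_{n-1}$ and $Y=B_n$, and use $\rho_{X\otimes Y}=(\rho_Y\otimes\rho_X)\circ\sigma_{X,Y}$.
\begin{itemize}
\item Write $x=b_1\otimes\cdots\otimes b_{n-1}$ and $y=b_n$. By the $n=2$ case, $\sigma_{X,Y}(x\otimes y)=\xi(y)\otimes\xi_X(x)$, and this element is invariant.
\item Apply $\rho_Y\otimes\rho_X=\id\otimes\rho_X$. This gives $\xi(b_n)\otimes\rho_X(\xi_X(x))$, so I need $\rho_X\circ\xi_X(x)=\xi(b_{n-1})\otimes\cdots\otimes\xi(b_1)$.
\end{itemize}

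This last identity is the main obstacle, because $x$ itself need not be invariant, so the inductive hypothesis does not apply directly. I will instead use a stronger statement, valid on all elements: for any tensor product $X$,
\[
\rho_X\circ\xi_X\,(b_1\otimes\cdots\otimes b_k)=\xi(b_k)\otimes\cdots\otimes\xi(b_1).
\]
For $k=2$ this is exactly Definition~\ref{def:commutor} together with $\xi^2=1$. In general, I expect it to follow by induction from \eqref{eq:inv-rho} together with the compatibility $\xi_{X\otimes Y}=\sigma_{Y,X}\circ(\xi_Y\otimes\xi_X)\circ\mathrm{flip}$, which is a restatement of the definition of $\sigma$. If this direct induction gets tangled, I would fall back on the Henriques--Kamnitzer characterization of $\rho$ as $\xi_{\text{total}}\circ(\xi\otimes\cdots\otimes\xi)\circ\mathrm{flip}$ \cite{HK}. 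With that characterization the proposition is immediate, since $\xi_{\text{total}}$ fixes the invariant element obtained from the general fact above.
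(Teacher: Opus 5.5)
Your proposal is correct in substance. Its real content is the ``stronger statement'' you leave as expected, and once that statement is proved the rest of your argument becomes redundant. Your suggested induction does close, provided you also use involutivity of the commutor (Proposition~\ref{prop:coboundary}(a)). From \eqref{eq:inv-rho} and your compatibility formula $\xi_{X\otimes Y}=\sigma_{Y,X}\circ(\xi_Y\otimes\xi_X)\circ\mathrm{flip}$,
\[
\rho_{X\otimes Y}\circ\xi_{X\otimes Y}
=(\rho_Y\otimes\rho_X)\circ\sigma_{X,Y}\circ\sigma_{Y,X}\circ(\xi_Y\otimes\xi_X)\circ\mathrm{flip}
=(\rho_Y\xi_Y\otimes\rho_X\xi_X)\circ\mathrm{flip}
=\Theta_{X\otimes Y},
\]
where $\Theta(b_1\otimes\cdots\otimes b_k)=\xi(b_k)\otimes\cdots\otimes\xi(b_1)$. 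The base case is $\rho_B\circ\xi_B=\xi_B=\Theta_B$ for a single factor. For invariant $x$ one has $\xi_X x=x$, so $\rho x=\rho\,\xi_X x=\Theta x$ at once. The outer induction over invariants, the $n=2$ case, and the $\varepsilon/\varphi$ lemma (which is correct) can all be dropped.

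The paper proves the same identity in the other normalization, $q_{[1,n]}=\xi_{B_n\otimes\cdots\otimes B_1}\circ\Theta$. It uses naturality of $\xi$ together with the fact that $\Theta$ intertwines $e_i$ with $f_{m-i}$. That fact means $\Theta$ conjugates the Sch\"utzenberger involution of the source to that of the target, which is why the two normalizations agree. The paper then concludes from the invariance of $q_{[1,n]}x$, which is automatic because $q_{[1,n]}$ is a crystal isomorphism. Your pruned version is more economical: it needs only the definition of $\sigma$, its involutivity, and $\xi$ fixing singleton components, with no tensor-rule computation at all. The paper's version gives the formula in the Henriques--Kamnitzer form.

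One caution on your fallback. Your lemma yields invariance of $\xi(y)\otimes\xi_X(x)$ with the total involution $\xi_X$, not invariance of $\Theta(x)$ when $n\ge3$. To fix this, either rerun the lemma with $\Theta_X$ in place of $\xi_X$ (its proof only uses the exchange $\varepsilon_i\leftrightarrow\varphi_{m-i}$), or argue as the paper does that $\Theta(x)=\xi(\rho x)$ with $\rho x$ invariant.
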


\begin{proof}
Write
\[
\Theta(b_1\otimes\cdots\otimes b_n)
=
\xi(b_n)\otimes\cdots\otimes\xi(b_1).
\]
Iterating the recursive definition of the full reversal, using naturality of $\xi$ and the fact
that $\Theta$ intertwines $e_i$ with $f_{m-i}$, and hence conjugates the Sch\"utzenberger
involution to the Sch\"utzenberger involution on the reversed tensor product, gives
\[
q_{[1,n]}=\xi_{B_n\otimes\cdots\otimes B_1}\circ\Theta.
\]
Now let $x\in\operatorname{Inv}(B_1\otimes\cdots\otimes B_n)$. Since $q_{[1,n]}$ is a crystal
isomorphism, $q_{[1,n]}x$ is again invariant, now in
$B_n\otimes\cdots\otimes B_1$. The Sch\"utzenberger involution fixes every invariant. Therefore
$
q_{[1,n]}x
=
\xi\bigl(\Theta(x)\bigr)
$
implies
\[
\Theta(x)
=
\xi\bigl(q_{[1,n]}x\bigr)
=
q_{[1,n]}x,
\]
where in the first equality we use that $\xi$ is an involution, and in the second that
$q_{[1,n]}x$ is invariant. Therefore on invariants the full reversal is exactly
\[
b_1\otimes\cdots\otimes b_n
\longmapsto
\xi(b_n)\otimes\cdots\otimes\xi(b_1).
\]
\end{proof}

This is the general form of Lemma~\ref{lem:theta-is-evac}: on invariants, the full cactus reversal
is simply reversal of the factors together with the Sch\"utzenberger involution on each factor.

\subsection{The Bender--Knuth elements as local rules}\label{sec:inv-local}

Fix $1\le i<n$, and write
\[
A=B_1\otimes\cdots\otimes B_{i-1},\qquad
B=B_i,\qquad
C=B_{i+1},\qquad
D=B_{i+2}\otimes\cdots\otimes B_n,
\]
where an empty tensor product is the unit. We use the convention that the reversal of a
one-element or empty interval is the identity:
\[
q_{[p,p]}=q_{[p,p-1]}=1.
\]
Then, uniformly for all $1\le i<n$,
\[
t_i
=
q_{[1,i]}\,q_{[1,i+1]}\,q_{[1,i]}\,q_{[1,i-1]}.
\]
Define the right-anchored analogue
\[
t'_i
:=
q_{[i+1,n]}\,q_{[i,n]}\,q_{[i+1,n]}\,q_{[i+2,n]}.
\]

The point is that $t_i$ and $t'_i$ give two ways to interchange the adjacent factors $B$ and $C$:
$t_i$ is anchored on the prefix $A$, while $t'_i$ is anchored on the suffix $D$.

\begin{lemma}[local-rule form]\label{lem:inv-local}
As maps
\[
A\otimes B\otimes C\otimes D
\longrightarrow
A\otimes C\otimes B\otimes D,
\]
we have
\begin{align}
t_i
&=
\bigl[
\sigma_{B,\,A\otimes C}\circ(\sigma_{A,B}\otimes1_C)
\bigr]\otimes1_D,
\label{eq:inv-tau}\\
t'_i
&=
1_A\otimes
\bigl[
\sigma_{B\otimes D,\,C}\circ(1_B\otimes\sigma_{C,D})
\bigr].
\label{eq:inv-taup}
\end{align}
Moreover,
$
q_{[1,n]}t_iq_{[1,n]}=t'_{n-i}.
$
Thus Theorem~\ref{thm:inv-dynkin} is equivalent to
$
t_i=t'_i,\; 1\le i<n.
$
\end{lemma}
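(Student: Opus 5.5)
The plan is to prove the two displayed formulas by expanding $t_i$ and $t'_i$ into interval reversals and regrouping the factors into commutors via \eqref{eq:inv-sigma-positions}. The conjugation identity will come from the cactus relation (C3), and the final equivalence is then a reindexing. All computations take place in the groupoid action of $\Cn_n$ on orderings of the factors, with operators composed from the right.

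\emph{The formula for $t_i$.} I will split
\[
t_i=\bigl(q_{[1,i]}\,q_{[1,i+1]}\bigr)\bigl(q_{[1,i]}\,q_{[1,i-1]}\bigr)
\]
and identify each pair.
\begin{itemize}
\item The right-hand pair acts on $A\otimes B\otimes C\otimes D$, with $A$ in positions $[1,i-1]$ and $B$ in position $i$. Apply the second expression in \eqref{eq:inv-sigma-positions} with $X=A$, $Y=B$, $p=1$, $r'=i-1$, $r=i$. Since $q_{[i,i]}=1$, this gives $\sigma_{A,B}=q_{[1,i]}q_{[1,i-1]}$. The result lies in $B\otimes A\otimes C\otimes D$, and the identity acts on $C$ and $D$.
\item The left-hand pair then acts with $B$ in position $1$ and $A\otimes C$ in positions $[2,i+1]$. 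Apply the first expression in \eqref{eq:inv-sigma-positions} with $X=B$, $Y=A\otimes C$, $p=1$, $r'=1$, $r=i+1$. This gives
\[
\sigma_{B,A\otimes C}=q_{[1,i]}\,q_{[i+1,i+1]}\,q_{[1,i+1]}=q_{[1,i]}\,q_{[1,i+1]},
\]
again acting as the identity on $D$.
\end{itemize}
Together these give \eqref{eq:inv-tau}.

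\emph{The formula for $t'_i$.} This is the mirror computation. Here $q_{[i+1,n]}q_{[i+2,n]}=\sigma_{C,D}$, taking $X=C$ in position $i+1$ and $Y=D$ in positions $[i+2,n]$ in the second expression. Then $q_{[i+1,n]}q_{[i,n]}=\sigma_{B\otimes D,C}$, taking $X=B\otimes D$ in positions $[i,n-1]$ and $Y=C$ in position $n$ in the first expression, where the singleton reversal $q_{[n,n]}$ is trivial. Everything acts as the identity on $A$, which gives \eqref{eq:inv-taup}.

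\emph{The conjugation identity.} In $\Cn_n$, relation (C3) gives
\[
q_{[1,n]}\,q_{[a,b]}\,q_{[1,n]}=q_{[n+1-b,\,n+1-a]}.
\]
Applying this to each of the four factors of $t_i$ yields
\[
q_{[n+1-i,n]}\,q_{[n-i,n]}\,q_{[n+1-i,n]}\,q_{[n+2-i,n]}=t'_{n-i}.
\]
This is exactly \eqref{eq:big-anchor-conjugation}. Because the action on tensor products is an action of $\Cn_n$, the identity also holds as maps. Theorem~\ref{thm:inv-dynkin} says $q_{[1,n]}t_iq_{[1,n]}=t_{n-i}$ on invariants. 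By the identity just proved, this is the statement $t'_{n-i}=t_{n-i}$, and letting $i$ range over $1,\dots,n-1$ shows it is equivalent to $t_i=t'_i$ for all $i$.

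\emph{Main obstacle.} The only delicate point is bookkeeping. One has to track which ordering of the factors each intermediate reversal acts on, so that each application of \eqref{eq:inv-sigma-positions} uses the correct positions $p,r',r$. The boundary cases $i=1$ (where $A$ is empty) and $i=n-1$ (where $D$ is empty) must also be shown to agree with the conventions $q_{[p,p]}=q_{[p,p-1]}=1$ and $\sigma_{\mathbf 1,X}=\id$. I would check these two cases separately, and also sanity-check the general case against the $n=4$, $i=2$ example above.
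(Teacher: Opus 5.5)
Your proposal is correct and follows the paper's proof: you write $t_i$ and $t'_i$ as uniform four-factor products of interval reversals, identify the two pairs as commutors via \eqref{eq:inv-sigma-positions}, and obtain $q_{[1,n]}t_iq_{[1,n]}=t'_{n-i}$ by applying (C3) to each factor. The only cosmetic difference is that you read off $\sigma_{A,B}=q_{[1,i]}q_{[1,i-1]}$ directly from the second expression in \eqref{eq:inv-sigma-positions}, whereas the paper starts from the first expression, $q_{[2,i]}q_{[1,i]}$, and converts it using (C3).
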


\begin{proof}
We use the uniform expression
$
t_i=q_{[1,i]}q_{[1,i+1]}q_{[1,i]}q_{[1,i-1]},
$
with the rightmost factor acting first. By \eqref{eq:inv-sigma-positions},
$\sigma_{A,B}\otimes1_C$ interchanges the blocks $[1,i-1]$ and $[i,i]$, and thus
\[
\sigma_{A,B}\otimes1_C
=
q_{[2,i]}q_{[1,i]}
=
q_{[1,i]}q_{[1,i-1]}.
\]
The second equality is (C3) when the intervals are nondegenerate, and follows directly from our
convention at the boundary. These are precisely the two rightmost factors in $t_i$.

After this interchange the factors are ordered as $B,A,C$. The map
$\sigma_{B,A\otimes C}$ moves $B$ past the block $A\otimes C$, so again by
\eqref{eq:inv-sigma-positions},
\[
\sigma_{B,A\otimes C}
=
q_{[1,i]}q_{[1,i+1]}.
\]
These are the two remaining factors in $t_i$, proving \eqref{eq:inv-tau}.
The proof of \eqref{eq:inv-taup} is the same argument from the right.

Finally, conjugation by $q_{[1,n]}$ reverses the interval labels:
$
q_{[1,n]}q_{[1,j]}q_{[1,n]}
=
q_{[n+1-j,n]}.
$
Thus conjugating the four factors in $t_i$ gives
$
q_{[1,n]}t_iq_{[1,n]}=t'_{n-i}.
$
\end{proof}

Westbury \cite[Def.~3.1]{Westbury18} interprets the Bender--Knuth generators of the cactus group as
local rules in a coboundary category. In his notation,
\[
\tau^A_{B,C}
:=
\sigma_{B,A\otimes C}\circ(\sigma_{A,B}\otimes1_C).
\]
We will also use the right-anchored version
\[
\bar\tau^{\,D}_{B,C}
:=
\sigma_{B\otimes D,C}\circ(1_B\otimes\sigma_{C,D}).
\]
Thus $t_i$ acts by the local rule $\tau^A_{B,C}$, while $t'_i$ acts by the corresponding
right-anchored local rule $\bar\tau^{\,D}_{B,C}$. By the lemma,
Theorem~\ref{thm:inv-dynkin} is equivalent to saying that these two local rules agree on
invariants.

\begin{remark}\label{rem:inv-three-swaps}
There is a third natural way to interchange $B$ and $C$, namely the adjacent commutor
\[
q_{[i,i+1]}=1_A\otimes\sigma_{B,C}\otimes1_D.
\]
In general this is different from both $t_i$ and $t'_i$. Indeed, the equality
$t_i=q_{[i,i+1]}$ would amount to
\[
\sigma_{B,A\otimes C}
=
(1_A\otimes\sigma_{B,C})\circ(\sigma_{B,A}\otimes1_C),
\]
which is the hexagon axiom, and this fails for crystals.

Thus there are in general three different maps interchanging the same two adjacent factors:
$t_i$, $t'_i$, and $q_{[i,i+1]}$. On tableaux, $q_{[i,i+1]}$ is the crystal reflection operator;
it satisfies the braid relations, while the Bender--Knuth involutions do not. 
\end{remark}

\subsection{The atomic case}\label{sec:inv-atomic}

We first prove Theorem~\ref{thm:inv-dynkin} in the case where every factor is the single-box crystal
\[
B_1=\cdots=B_N=B(\varpi_1).
\]
This is the only place in the proof where tableaux enter.
An element of $B(\varpi_1)^{\otimes N}$ is a word
\[
u=u_1\cdots u_N,\qquad u_i\in[m].
\]
Under RSK, the crystal operators preserve the recording tableau $Q(u)$ and act on the insertion
tableau $P(u)$ \cite{Fulton}. Thus the connected components of the crystal are the fibres of
$Q(u)$. If $Q(u)$
has shape $\lambda$, the corresponding component is isomorphic to $B(\lambda)$. Thus, if $u$ is
invariant, its component must have
$
\lambda=(d^m)
$
for some $d$, and $N=dm$. We therefore obtain a bijection
\begin{equation}\label{eq:inv-syt}
\operatorname{Inv}\bigl(B(\varpi_1)^{\otimes N}\bigr)
\longleftrightarrow
\mathrm{SYT}(d^m).
\end{equation}
Thus the positions of the tensor factors become the entries $1,\dots,N$ of a rectangular standard
tableau, exactly as in Sections~\ref{sec:first}--\ref{sec:promotion}.

\begin{lemma}[the atomic dictionary]\label{lem:inv-dictionary}
Under the bijection \eqref{eq:inv-syt}, the interval reversal $q_{[1,r]}$ acts by evacuation on
the subtableau formed by the entries $1,\dots,r$, therefore by the Bender--Knuth word $q_{r-1}$.
Thus, the element $t_i\in\Cn_N$ acts by the Bender--Knuth involution $t_i$.
\end{lemma}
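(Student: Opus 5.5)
We reduce to the standard relationship between crystal operations on words and RSK, then compare the definitions. First we identify the action of $q_{[1,r]}$ on a word $u=u_1\cdots u_N$. By \eqref{eq:inv-rho} and Proposition~\ref{prop:coboundary}, $q_{[1,r]}$ acts as $\rho\otimes 1$ on $B(\varpi_1)^{\otimes r}\otimes B(\varpi_1)^{\otimes(N-r)}$. Iterating Definition~\ref{def:commutor} gives the formula already obtained in the proof of Proposition~\ref{prop:inv-reversal}:
\[
\rho(u_1\cdots u_r)=\xi_{B(\varpi_1)^{\otimes r}}\bigl(\xi(u_r)\cdots\xi(u_1)\bigr).
\]
Since $\xi(a)=m+1-a$ on $B(\varpi_1)$, the inner word is $\mathrm{rc}(u_1\cdots u_r)$. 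The outer $\xi$ is the Sch\"utzenberger involution of the crystal of words of length $r$. By the Lascoux--Sch\"utzenberger description, it preserves the recording tableau $Q$ and replaces $P$ by $\ev(P)$. Using the standard facts $P(\mathrm{rc}(v))=\ev(P(v))$ and $Q(\mathrm{rc}(v))=\ev(Q(v))^{\mathrm{t}}$-type identities, I would pin down the classical consequence: $\rho(v)$ has $P(\rho(v))=P(v)$ and $Q(\rho(v))=\ev(Q(v))$. This is the standard statement that the crystal reversal acts on recording tableaux by evacuation.

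The main obstacle is getting the transpose and evacuation conventions exactly right. I would handle this cleanly as follows. The map $v\mapsto P(\xi(\mathrm{rc}(v)))$ equals $P(v)$, since $\xi$ replaces $P(\mathrm{rc}(v))=\ev(P(v))$ by $\ev\ev P(v)=P(v)$. The recording tableau of $\xi(\mathrm{rc}(v))$ equals $Q(\mathrm{rc}(v))$. By Sch\"utzenberger's theorem, $Q(\mathrm{rc}(v))=\ev(Q(v))$, where the complement in $\mathrm{rc}$ undoes the transpose that plain reversal would introduce.

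Next we pass to the full word $u$. The prefix $u_1\cdots u_r$ has recording tableau $Q(u)|_{\le r}$, which is the subtableau of entries $1,\dots,r$. Its insertion tableau is unchanged by $\rho$, so inserting the suffix $u_{r+1}\cdots u_N$ yields the same bumping sequence. Hence $Q(q_{[1,r]}u)$ is obtained from $Q(u)$ by evacuating the subtableau on $1,\dots,r$ and leaving the other entries fixed.

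Under \eqref{eq:inv-syt}, which sends $u$ to $Q(u)$, we conclude that $q_{[1,r]}$ acts by evacuation on the entries $\le r$. By Lemma~\ref{lem:q-is-evac}, applied to the straight-shape subtableau with entries in $[r]$, this is the Bender--Knuth word $q_{r-1}$. Bender--Knuth involutions act on standard tableaux as usual, with $t_j$ swapping $j,j+1$ when they are not adjacent in the tableau. Finally, Theorem~\ref{thm:CGP} writes
\[
t_i=q_{[1,i]}q_{[1,i+1]}q_{[1,i]}q_{[1,i-1]}.
\]
Substituting $q_{[1,r]}\mapsto q_{r-1}$ into this expression yields the Bender--Knuth $t_i$, since the CGP inversion formulas hold in $\BK_N$ as identities in the $t_j$. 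Because the map $\Cn_N\to\BK_N$ is a homomorphism sending $q_{[1,r]}$ to $q_{r-1}$, the element $t_i$ acts as the Bender--Knuth involution.
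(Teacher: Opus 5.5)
Your proposal is correct and follows the paper's proof essentially step by step: $q_{[1,r]}$ acts on the prefix as $\xi\circ\mathrm{rc}$, so $P$ is preserved and $Q$ is evacuated, the insertion of the suffix is unchanged, and Lemma~\ref{lem:q-is-evac} identifies the result with $q_{r-1}$; for $t_i$ you invoke the CGP identity through the homomorphism $\Cn_N\to\BK_N$, whereas the paper computes $q_{i-1}q_iq_{i-1}q_{i-2}=p_ip_{i-1}^{-1}=t_i$ directly, which amounts to the same thing. The only blemish is the stray transpose in your first paragraph, which should simply be deleted, since $Q(\mathrm{rc}(v))=\ev(Q(v))$ holds exactly, as you yourself conclude.
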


\begin{example}\label{ex:inv-dictionary}
Let $m=3$, $N=6$, and take the invariant word $u=332121$, with
\[
  Q(u)=\begin{ytableau}1&2\\3&5\\4&6\end{ytableau}.
\]

\emph{$r=3$.} On the first three factors,
\[
\Theta(332)=\mathrm{rc}(332)=211.
\]
The word $211$ is highest weight in its component, which is isomorphic to $B((2,1))$. Applying
lowering operators,
\[
211\xrightarrow{f_1}212\xrightarrow{f_2}213
\xrightarrow{f_2}313\xrightarrow{f_1}323,
\]
so $\xi(211)=323$. Then
\[
q_{[1,3]}(u)=323121,
\qquad
Q(323121)=\begin{ytableau}1&3\\2&5\\4&6\end{ytableau}.
\]
Thus the subtableau
$\begin{ytableau}1&2\\3\end{ytableau}$
is evacuated to
$\begin{ytableau}1&3\\2\end{ytableau}$,
while the entries $4,5,6$ stay in place.

\emph{$r=6$.} Since $u$ is invariant, Proposition~\ref{prop:inv-reversal} gives
\[
q_{[1,6]}(u)=\mathrm{rc}(u)=323211.
\]
Its recording tableau is
\[
  Q(323211)=\begin{ytableau}1&3\\2&4\\5&6\end{ytableau},
\]
which is the evacuation of $Q(u)$: on a rectangle, this is rotation by $180^\circ$ together with
$a\mapsto7-a$, as in Lemma~\ref{lem:theta-is-evac}.
\end{example}

\begin{proof}
Let
$
v=u_1\cdots u_r.
$
On the first $r$ factors, $q_{[1,r]}$ acts as $\xi\circ\Theta$, where
$
\Theta(v)=\mathrm{rc}(v)
$
is reverse-complement, with $a\mapsto m+1-a$.

A standard symmetry of RSK gives
\[
P(\mathrm{rc}(v))=\ev(P(v)),
\qquad
Q(\mathrm{rc}(v))=\ev(Q(v))
\]
\cite[App.~A1]{Fulton,Stanley}. The Sch\"utzenberger involution $\xi$ preserves the recording
tableau and acts by evacuation on the insertion tableau. Then
\[
P(q_{[1,r]}v)=P(v),
\qquad
Q(q_{[1,r]}v)=\ev(Q(v)).
\]
Since the insertion tableau after the first $r$ letters is unchanged, inserting the remaining
letters $u_{r+1},\dots,u_N$ adds the same boxes as before. Thus $q_{[1,r]}$ evacuates the
subtableau of $Q(u)$ formed by the entries $1,\dots,r$ and leaves the remaining entries in place.
By Lemma~\ref{lem:q-is-evac}, this is $q_{r-1}$.

Using
$
t_i
=
q_{[1,i]}q_{[1,i+1]}q_{[1,i]}q_{[1,i-1]}
$
in the cactus group, we conclude that its action on tableaux is
$
q_{i-1}q_iq_{i-1}q_{i-2}.
$
Since $q_i=q_{i-1}p_i$ and each $q_j$ is an involution, this is
$
p_ip_{i-1}^{-1},
$
which is precisely the Bender--Knuth involution $t_i$ on tableaux.
\end{proof}

\begin{proposition}[the atomic relation]\label{prop:inv-atomic}
On $\operatorname{Inv}(B(\varpi_1)^{\otimes N})$,
\[
t_j=t'_j,\qquad 1\le j<N.
\]
\end{proposition}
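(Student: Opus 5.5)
The plan is to transport the statement to rectangular standard tableaux via the atomic dictionary, and then invoke the Dynkin-reversal relation already established on rectangles.

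Fix an invariant word $u\in\operatorname{Inv}(B(\varpi_1)^{\otimes N})$, with $N=dm$. By \eqref{eq:inv-syt}, $u$ is determined by its recording tableau $Q(u)\in\mathrm{SYT}(d^m)$; its insertion tableau is the unique tableau of shape $(d^m)$ with entries in $[m]$. The map $u\mapsto Q(u)$ is therefore a bijection from invariants onto standard rectangular tableaux.

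By Lemma~\ref{lem:inv-dictionary}, each $q_{[1,r]}$ acts on invariants as the Bender--Knuth word $q_{r-1}$ acting on $Q(u)$. Consequently every element of the subgroup generated by the left-anchored reversals acts through the corresponding Bender--Knuth word. In particular $t_j\in\Cn_N$ acts as the Bender--Knuth involution $t_j$ on $\mathrm{SYT}(d^m)$.

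Next consider the full reversal $q_{[1,N]}$. By Proposition~\ref{prop:inv-reversal} and the $r=N$ case of the dictionary, it acts by $\ev$ on $Q(u)$. By Lemma~\ref{lem:theta-is-evac}, on a rectangle this equals $\theta$ and also $q_{N-1}$.

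Lemma~\ref{lem:inv-local} gives $q_{[1,N]}\,t_{N-j}\,q_{[1,N]}=t'_j$ as maps on invariants. This is a cactus-group identity, so it holds in any action, and it applies here because $q_{[1,N]}$ and $t_{N-j}$ preserve invariance. Transporting through $Q$, the map $t'_j$ becomes $\theta\, t_{N-j}\,\theta$ on $\mathrm{SYT}(d^m)$, where $t_{N-j}$ is a Bender--Knuth involution on entries $1,\dots,N$. Lemma~\ref{lem:theta-conj}, with $n=N$, gives $\theta t_{N-j}\theta=t_j$. Hence $t'_j$ and $t_j$ act identically on $Q(u)$, and by the bijection \eqref{eq:inv-syt} they agree on invariants.

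The main point to check is bookkeeping rather than mathematics. The dictionary is stated only for left-anchored reversals $q_{[1,r]}$, so $t'_j$ must not be translated directly. We avoid this by writing $t'_j$ as a conjugate of $t_{N-j}$ by $q_{[1,N]}$, which uses only left-anchored generators. A standard tableau is a semistandard tableau with entries in $[N]$, so Lemma~\ref{lem:theta-conj} applies. One must also confirm that the groupoid subtlety is harmless: all factors are equal, so there is a genuine action on a single crystal.
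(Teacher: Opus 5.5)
Your proposal is correct and is essentially the paper's proof. You use Lemma~\ref{lem:inv-local} to write $t'_j=q_{[1,N]}t_{N-j}q_{[1,N]}$, transport to $\mathrm{SYT}(d^m)$ via Lemma~\ref{lem:inv-dictionary}, and finish with the standard-tableau case of the Dynkin-reversal relation. The only difference is cosmetic: you cite Lemmas~\ref{lem:theta-is-evac} and~\ref{lem:theta-conj} with $n=N$, whereas the paper cites Theorem~\ref{thm:first-kind}, which it proves from exactly those two lemmas.
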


\begin{proof}
By Lemma~\ref{lem:inv-local},
\[
t'_{N-j}=q_{[1,N]}t_jq_{[1,N]}.
\]
Under the dictionary of Lemma~\ref{lem:inv-dictionary}, the right-hand side acts on
\eqref{eq:inv-syt} as
$
q\,t_j\,q.
$
By Theorem~\ref{thm:first-kind}, this is $t_{N-j}$. Replacing $N-j$ by $j$ gives the result.
\end{proof}

Only the standard-tableau case of Theorem~\ref{thm:first-kind} is used here. In this case $t_j$
interchanges the entries $j$ and $j+1$ unless they lie in the same row or column. Thus
Lemma~\ref{lem:theta-conj} reduces to the simple observation that rotation by $180^\circ$,
together with $a\mapsto N+1-a$, preserves this condition.

\subsection{Factorising a block interchange}\label{sec:inv-factor}

The local rules factor when one of the two moving blocks is itself a tensor product. This is
essentially Westbury's factorisation \cite[Lemmas~2, 3]{Westbury18}.

\begin{lemma}[factorisation]\label{lem:inv-factor}
For any crystals $A,B,C,D$,
\begin{align}
\tau^A_{B,\,C\otimes D}
&=
\tau^{A\otimes C}_{B,D}
\circ
\bigl(\tau^A_{B,C}\otimes1_D\bigr),
\label{eq:inv-fact1}\\
\tau^A_{B\otimes C,\,D}
&=
\bigl(\tau^A_{B,D}\otimes1_C\bigr)
\circ
\tau^{A\otimes B}_{C,D}.
\label{eq:inv-fact2}
\end{align}
The mirror identities hold for $\bar\tau$, with prefixes replaced by suffixes.
\end{lemma}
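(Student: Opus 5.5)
We prove both identities by expanding the definition
\[
\tau^A_{B,C}=\sigma_{B,A\otimes C}\circ(\sigma_{A,B}\otimes 1_C)
\]
and rewriting the commutors with a larger block using the cactus axiom (Proposition~\ref{prop:coboundary}(c)), together with involutivity and naturality. Naturality here applies to the associativity identifications, which are trivial in our strict setting. The outer commutor $\sigma_{B,A\otimes C\otimes D}$ is the only place where the block $C\otimes D$ interacts with $B$. The key tool is the identity \eqref{eq:inv-rho}, which expresses full reversals in terms of commutors with any splitting. Equivalently, we may use the interval-reversal formula \eqref{eq:inv-sigma-positions}, which turns everything into computations with the $q_{[p,r]}$ inside the cactus action, where relations (C1)--(C3) are available.

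For \eqref{eq:inv-fact1}, label the blocks $A,B,C,D$ as occupying positions $[1,a]$, $a+1$, $a+2$, $a+3$; single positions suffice by naturality, or treat them as blocks. Using \eqref{eq:inv-sigma-positions}, the left side is
\[
\sigma_{B,A\otimes C\otimes D}\circ(\sigma_{A,B}\otimes 1)
= q_{[1,a+1]}\,q_{[1,a+3]}\,\cdot\, q_{[1,a+1]}\,q_{[1,a]}
\]
in block form. Here $\sigma_{B,X}$ with $X=A\otimes C\otimes D$ equals $q_{[1,|X|+1]}$ composed with the reversal $q_{[1,|X|]}$ of $X$, the latter expanded via \eqref{eq:inv-rho}. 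The right side is the composite
\[
\sigma_{B,A C D}\ \text{realized as}\ \sigma_{B,(AC)\otimes D}\circ(\sigma_{AC,B}\otimes 1_D)\circ(\sigma_{B,AC}\otimes 1_D)\circ(\sigma_{A,B}\otimes 1_{CD}),
\]
whose middle two factors cancel by involutivity. It therefore remains to show
\[
\sigma_{B,X\otimes D}=\sigma_{B\otimes X... }
\]
in the form
\[
\sigma_{B,\,Y\otimes D}=\sigma_{Y\otimes B,\,D}\text{-type rearrangement}.
\]
Concretely, we need
\[
\sigma_{B,\,Y\otimes D}=\sigma_{B,\,Y\otimes D}\circ\bigl((\sigma_{Y,B}\circ\sigma_{B,Y})\otimes 1_D\bigr),
\]
which is trivial once the intermediate map is recognized as $\tau^{Y}_{B,D}\circ(\sigma_{B,Y}\otimes 1_D)$ with $Y=A\otimes C$. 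So the real content is the identity
\[
\tau^{Y}_{B,D}\circ(\sigma_{B,Y}\otimes1)=\sigma_{B,Y\otimes D}
\]
up to the prefix $\sigma_{A,B}$, together with
\[
\tau^A_{B,C}\otimes 1=(\sigma_{B,A\otimes C}\otimes 1)(\sigma_{A,B}\otimes 1).
\]
Unwinding, $\tau^{Y}_{B,D}=\sigma_{B,Y\otimes D}\circ(\sigma_{Y,B}\otimes 1_D)$, and composing with $\sigma_{B,Y}\otimes 1$ gives $\sigma_{B,Y\otimes D}$ by involutivity. Hence \eqref{eq:inv-fact1} follows from pure bookkeeping, with no cactus axiom needed beyond associativity of $\otimes$.

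For \eqref{eq:inv-fact2} the moving block $B\otimes C$ is composite, so we need commutors of the form $\sigma_{A,B\otimes C}$ and $\sigma_{B\otimes C,\,A\otimes D}$. These do not factor by bookkeeping alone, since this is where the failure of the hexagon axiom (Proposition~\ref{prop:coboundary}(d)) bites. The plan is to pass to the interval-reversal description. By \eqref{eq:inv-sigma-positions}, both sides become explicit products of $q_{[p,r]}$'s. We then use (C3) to move all reversals to a common normal form, for instance a product anchored at position $1$. Each composite $\sigma_{X,Y}=q_{[p,r]}q_{[p,r']}q_{[r'+1,r]}$ can be manipulated, and the cactus relations in the form "conjugating $q_{[k,l]}$ by $q_{[i,j]}$ reflects the interval" should let the inner reversals of $B\otimes C$ pass through. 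I expect this to be the main obstacle: choosing the right order so that the reversal of the $B\otimes C$ block produced by $\sigma_{A,B\otimes C}$ is absorbed by the one in $\sigma_{B\otimes C,A\otimes D}$, and checking the degenerate cases where $A$ or $D$ is empty. Alternatively, we can cite Westbury's Lemmas~2--3 directly. The mirror identities for $\bar\tau$ follow by applying the same argument to the reversed tensor order, or by conjugating by the full reversal $\rho$ as in Lemma~\ref{lem:inv-local}.
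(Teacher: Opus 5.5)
Your argument for \eqref{eq:inv-fact1} is correct and is the paper's. Expanding the right-hand side gives four commutors, and the middle pair $(\sigma_{A\otimes C,B}\otimes 1_D)\circ(\sigma_{B,A\otimes C}\otimes 1_D)$ cancels by involutivity, leaving $\sigma_{B,A\otimes C\otimes D}\circ(\sigma_{A,B}\otimes 1_{C\otimes D})=\tau^A_{B,C\otimes D}$. (Your block-form display of the left side is off: it should read $q_{[1,a+2]}q_{[1,a+3]}q_{[1,a+1]}q_{[1,a]}$. You never use it, so this is harmless.)

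The genuine gap is \eqref{eq:inv-fact2}, which you do not prove. You sketch a plan to normalise products of interval reversals via (C3), flag it as the main obstacle, and otherwise defer to Westbury. The missing idea is that the local rule is itself an involution:
\[
\tau^A_{C,B}\circ\tau^A_{B,C}=\id .
\]
This holds because, regarding $B$ and $C$ as single factors, both maps are the action of the same cactus element $t_i$ of Lemma~\ref{lem:inv-local}, and $t_i^2=1$ in $\Cn_n$. With this, \eqref{eq:inv-fact2} is just \eqref{eq:inv-fact1} for the triple $(D,B,C)$, inverted:
\[
\tau^A_{B\otimes C,D}=\bigl(\tau^{A\otimes B}_{D,C}\circ(\tau^A_{D,B}\otimes 1_C)\bigr)^{-1}=(\tau^A_{B,D}\otimes 1_C)\circ\tau^{A\otimes B}_{C,D}.
\]
This is how the paper argues.

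Your remark that \eqref{eq:inv-fact2} is where the failure of the hexagon axiom bites is misleading. The hexagon plays no role; the only input beyond involutivity of $\sigma$ is the cactus axiom, entering through $t_i^2=1$.

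Your interval-reversal route can also be completed. Treat $A,B,C,D$ as four single factors. Then \eqref{eq:inv-sigma-positions} gives the left side as $q_{[1,4]}q_{[1,2]}q_{[3,4]}q_{[1,3]}q_{[2,3]}$ and the right side as $q_{[1,3]}q_{[2,3]}q_{[1,2]}\,q_{[1,4]}q_{[2,4]}q_{[1,3]}q_{[1,2]}$. Relations (C1)--(C3) reduce both to $q_{[1,2]}q_{[1,4]}q_{[1,3]}$.

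As written, however, the proposal contains neither this computation nor the involution argument. So \eqref{eq:inv-fact2}, and with it its mirror for $\bar\tau$, is left unproved.
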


Thus \eqref{eq:inv-fact1} moves $B$ past $C\otimes D$ in two steps: first past $C$, anchored on
$A$, and then past $D$, now anchored on $A\otimes C$. Similarly, \eqref{eq:inv-fact2} moves $D$
left past $B\otimes C$ by first crossing $C$ and then $B$. 

\begin{proof}
Expanding the right-hand side of \eqref{eq:inv-fact1} gives
\[
\sigma_{B,A\otimes C\otimes D}
\circ
(\sigma_{A\otimes C,B}\otimes1_D)
\circ
(\sigma_{B,A\otimes C}\otimes1_D)
\circ
(\sigma_{A,B}\otimes1_C\otimes1_D).
\]
The two middle commutors cancel by Proposition~\ref{prop:coboundary}(a), leaving
$\tau^A_{B,C\otimes D}$.

For \eqref{eq:inv-fact2}, note that
\[
\tau^A_{C,B}\circ\tau^A_{B,C}=\id,
\]
since both maps are given by the same cactus generator $t_i$, and $t_i^2=1$. Apply
\eqref{eq:inv-fact1} with $(B,C,D)$ replaced by $(D,B,C)$ and invert the resulting identity.

The identities for $\bar\tau$ follow by conjugating with the full reversal. 
\end{proof}

Now suppose
\[
B=X_1\otimes\cdots\otimes X_b,
\qquad
C=Y_1\otimes\cdots\otimes Y_c,
\]
where the $X_p$ and $Y_r$ are atomic factors, and suppose that $A$ contains $u$ atomic factors.
Thus $X_p$ initially occupies position $u+p$, while $Y_r$ occupies position $u+b+r$.

Apply \eqref{eq:inv-fact2} first, splitting off $X_b$ from the left block:
\[
\tau^A_{B,C}
=
\bigl(\tau^A_{X_1\otimes\cdots\otimes X_{b-1},C}\otimes1_{X_b}\bigr)
\circ
\tau^{A\otimes X_1\otimes\cdots\otimes X_{b-1}}_{X_b,C}.
\]
Thus $X_b$ crosses the whole block $C$ first, then $X_{b-1}$ does the same, and so on.
Applying \eqref{eq:inv-fact1} to each of these block crossings, $X_p$ crosses
$Y_1,Y_2,\dots,Y_c$ in that order.

Each map in this decomposition has the form
$\tau^P_{X_p,Y_r}$,
where $P$ is the tensor product of the factors currently to the left of $X_p,Y_r$; in particular, the anchor changes after each crossing.

By Lemma~\ref{lem:inv-local}, if $X_p$ and $Y_r$ occupy adjacent positions $j,j+1$, the map
$\tau^P_{X_p,Y_r}$ is precisely the action of the cactus generator $t_j$. Applying the mirror identities in the opposite order, first the mirror of \eqref{eq:inv-fact1} to split off $X_b$ from $B$ and then the mirror of \eqref{eq:inv-fact2} to split $C$, gives the same sequence of crossings for $\bar\tau^{\,D}_{B,C}$: $X_b$ crosses $Y_1,\dots,Y_c$, then $X_{b-1}$ does the same, and so on, now with each $t_j$ replaced by $t'_j$. We therefore obtain the following.

\begin{corollary}\label{cor:inv-LR}
The maps $\tau^A_{B,C}$ and $\bar\tau^{\,D}_{B,C}$ factor into the same sequence of $bc$
adjacent box interchanges. In chronological order, their indices are
\begin{equation}\label{eq:inv-R}
 u+b,\dots,u+b+c-1;
 u+b-1,\dots,u+b+c-2;
 \dots;
 u+1,\dots,u+c.
\end{equation}
For $\tau^A_{B,C}$ the corresponding factors are $t_j$, while for
$\bar\tau^{\,D}_{B,C}$ they are $t'_j$.
\end{corollary}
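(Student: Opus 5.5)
The plan is to derive Corollary~\ref{cor:inv-LR} by induction on $b+c$ from the factorisation identities of Lemma~\ref{lem:inv-factor}, keeping track of positions at every step.

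\textbf{Base case and first splitting.} When $b=c=1$ both blocks are atomic. By Lemma~\ref{lem:inv-local}, $\tau^A_{X_1,Y_1}$ is the action of $t_{u+1}$ and $\bar\tau^{\,D}_{X_1,Y_1}$ is the action of $t'_{u+1}$. This matches \eqref{eq:inv-R}, which here consists of the single index $u+b=u+1$.

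For $b>1$, apply \eqref{eq:inv-fact2} with $B=X_1\otimes\cdots\otimes X_{b-1}$ and $C=X_b$. Chronologically, $X_b$ first crosses the whole block $Y_1\otimes\cdots\otimes Y_c$, anchored on $A\otimes X_1\otimes\cdots\otimes X_{b-1}$, which has $u+b-1$ atomic factors. Afterwards the block $X_1\otimes\cdots\otimes X_{b-1}$ crosses $Y_1\otimes\cdots\otimes Y_c$, still anchored on $A$; the factor $X_b$ now sits to the right and does not affect positions. Induction applies to this second step and yields the remaining index rows of \eqref{eq:inv-R}.

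\textbf{Second splitting.} For a single $X_p$ crossing the block $C$ anchored on $u+p-1$ atomic factors, iterate \eqref{eq:inv-fact1}. First $X_p$ crosses $Y_1$, anchored on $u+p-1$ factors, at positions $u+p,u+p+1$. Then $X_p$ crosses $Y_2$, anchored on one more factor, at positions $u+p+1,u+p+2$. Continuing, the indices are $u+p,\dots,u+p+c-1$. By Lemma~\ref{lem:inv-local} each crossing is the action of $t_j$, with $j$ the left position of the crossing pair; the factors beyond the anchor form the suffix $D$, which only tensors by identity. Reading $p=b,b-1,\dots,1$ gives exactly \eqref{eq:inv-R}.

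\textbf{Mirror case.} For $\bar\tau^{\,D}_{B,C}$, use the mirror identities from Lemma~\ref{lem:inv-factor}, applied in the order described in the text: first the mirror of \eqref{eq:inv-fact1}, splitting $B$ so that $X_b$ moves first past $C$, anchored on $Y$-suffix data and $D$; then the mirror of \eqref{eq:inv-fact2}, splitting $C$ so that $X_b$ crosses $Y_1$ first, then $Y_2$, and so on.

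The main obstacle is this bookkeeping: one has to check that the mirrored order of splittings really produces the same chronological sequence of pairs $(X_p,Y_r)$, and not the reverse one ($Y_c$ crossing first). I would verify this by checking directly that in the mirrored factorisation the crossing of $X_p$ with $Y_r$ occurs at absolute positions $u+p+r-1,\,u+p+r$. The right-anchored local rule there has suffix consisting of the $Y_{r'}$ with $r'>r$ not yet crossed, the already moved $X$'s, and $D$. By the mirror form of Lemma~\ref{lem:inv-local}, such a crossing is $t'_{u+p+r-1}$. The positions are determined purely by which factors lie to the left, so the index sequence agrees with \eqref{eq:inv-R}.
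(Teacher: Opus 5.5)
Your proposal is correct and follows the paper's argument: peel off $X_b$ with \eqref{eq:inv-fact2}, split the $Y$-block with \eqref{eq:inv-fact1}, apply the mirror identities in the opposite order for $\bar\tau^{\,D}_{B,C}$, and read off each crossing as $t_j$ or $t'_j$ via Lemma~\ref{lem:inv-local}. One caveat on your proposed check: the crossing of $X_p$ with $Y_r$ sits at positions $u+p+r-1,u+p+r$ in every linear extension of the $b\times c$ grid, including the order in which $Y_1$ crosses $X_b,\dots,X_1$ before $Y_2$ moves, so the position count cannot fix the chronological sequence; what fixes it is the order in which you apply the mirrored identities, which you state correctly.
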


For example, when $b=c=2$ and $u=0$, both factorizations have chronological index word
$(2,3,1,2)$.

\subsection{Propagation from the atomic case}\label{sec:inv-proof}

\begin{lemma}[atomic propagation]\label{lem:inv-propagation}
Suppose that
$
t_j=t'_j
$
on $\operatorname{Inv}(B(\varpi_1)^{\otimes N})$ for every $N$ and every $j$. Then
$
t_i=t'_i
$
on
\[
\operatorname{Inv}\bigl(B(\lambda_1)\otimes\cdots\otimes B(\lambda_n)\bigr)
\]
for all $\lambda_1,\dots,\lambda_n$ and all $i$.
\end{lemma}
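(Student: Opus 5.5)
We embed each irreducible factor $B(\lambda_k)$ into a tensor power of the single-box crystal, and then move the equality $t_i=t'_i$ from the atomic setting to the general one using naturality (Lemma~\ref{lem:inv-natural}) together with the factorisation of Corollary~\ref{cor:inv-LR}.

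\emph{Embedding.} For each $k$, choose a word $u^{(k)}$ of length $N_k=|\lambda_k|$ that is highest weight of weight $\lambda_k$ in $B(\varpi_1)^{\otimes N_k}$. One choice is the row reading word of the highest-weight tableau of shape $\lambda_k$. Its connected component is isomorphic to $B(\lambda_k)$, so we obtain an injective crystal morphism
\[
g_k\colon B(\lambda_k)\hookrightarrow B(\varpi_1)^{\otimes N_k}
\]
onto a union of connected components. The tensor product $g=g_1\otimes\cdots\otimes g_n$ is again such an inclusion: tensor products of unions of components form a union of components, because the tensor product rule is local. Since $g$ is injective and commutes with the crystal operators, it sends invariants to invariants.

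\emph{Comparing actions.} Put $A=B(\lambda_1)\otimes\cdots\otimes B(\lambda_{i-1})$ and $D=B(\lambda_{i+2})\otimes\cdots\otimes B(\lambda_n)$. By Lemma~\ref{lem:inv-local}, $t_i$ acts by $\tau^A_{B(\lambda_i),B(\lambda_{i+1})}\otimes1_D$ and $t'_i$ acts by $1_A\otimes\bar\tau^{\,D}_{B(\lambda_i),B(\lambda_{i+1})}$. The local rules are built from commutors, and commutors are natural for inclusions of unions of components (Proposition~\ref{prop:coboundary}(b)). Hence $g$ intertwines $\tau^A_{B,C}$ with $\tau^{g(A)}_{g(B),g(C)}$, where $g(B)=B(\varpi_1)^{\otimes b}$ and so on; the same holds for $\bar\tau$. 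Set $b=N_i$, $c=N_{i+1}$ and $u=N_1+\cdots+N_{i-1}$. By Corollary~\ref{cor:inv-LR}, the atomic $\tau$ factors as the product of the generators $t_j\in\Cn_N$ along the index word~\eqref{eq:inv-R}, and the atomic $\bar\tau$ factors as the product of the $t'_j$ along the same index word. It is important here that the anchors $A$ and $D$ are themselves tensor powers of $B(\varpi_1)$, so each elementary crossing is literally the action of $t_j$ or $t'_j$ on $B(\varpi_1)^{\otimes N}$.

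\emph{Conclusion and main obstacle.} Take $x$ invariant. Then $g(x)$ is invariant in $B(\varpi_1)^{\otimes N}$. Every intermediate element in the chain of crossings is also invariant, because each step is a crystal isomorphism and therefore preserves one-element components. At each step we apply the hypothesis $t_j=t'_j$ on $\operatorname{Inv}(B(\varpi_1)^{\otimes N})$. The two chains therefore coincide step by step, so $g(t_ix)=g(t'_ix)$, and injectivity of $g$ gives $t_ix=t'_ix$.

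The step I expect to need the most care is this inductive step-by-step comparison. The two factorisations must apply the elementary crossings in the same chronological order and act on the same intermediate element, and the intermediate element at each stage must be an invariant of a tensor power of $B(\varpi_1)$. Corollary~\ref{cor:inv-LR} asserts the same order for both, and invariance of the intermediates follows as above, so an induction on the number of crossings finishes the argument. A secondary point to check is that naturality applies to the full local rules, which are composites of commutors with mixed anchors. This follows by applying Proposition~\ref{prop:coboundary}(b) to each commutor separately, since tensor products of the $g_k$ remain inclusions of unions of components.
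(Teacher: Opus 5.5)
Your proposal is correct and follows the paper's own proof. Both arguments refine each factor into single boxes via an embedding onto a connected component, use naturality of the commutor to identify $t_i$ and $t'_i$ with the refined block interchanges $\tau^{\widetilde A}_{\widetilde B,\widetilde C}$ and $\bar\tau^{\,\widetilde D}_{\widetilde B,\widetilde C}$, compare these crossing by crossing through Corollary~\ref{cor:inv-LR} and the atomic hypothesis on invariants, and pull back by injectivity. The differences are cosmetic: you allow any highest-weight word for the embedding, where the paper uses row reading, and the paper distinguishes the source and target embeddings $\iota_{\mathrm{src}}$ and $\iota_{\mathrm{tgt}}$, which your single $g$ silently conflates.
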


\begin{proof}
Fix $i$, and write $A,B,C,D$ as in Section~\ref{sec:inv-local}, with
\[
B=B(\lambda_i),\qquad C=B(\lambda_{i+1}).
\]

\emph{Refine into boxes.}
For each $j$, use the row-reading embedding
\[
\iota_j\colon
B(\lambda_j)\hookrightarrow B(\varpi_1)^{\otimes|\lambda_j|},
\]
where the rows of a tableau are read from left to right, starting with the bottom row and moving
upward. Its image is a connected component of $B(\varpi_1)^{\otimes|\lambda_j|}$. Tensoring these inclusions gives
\[
\iota\colon
B(\lambda_1)\otimes\cdots\otimes B(\lambda_n)
\hookrightarrow
B(\varpi_1)^{\otimes N},
\qquad
N=\sum_j|\lambda_j|.
\]
The image is a union of connected components, so invariants map to invariants.

Write $\widetilde A,\widetilde B,\widetilde C,\widetilde D$ for the tensor products of single-box
crystals obtained by refining the factors in $A,B,C,D$. Thus
\[
\widetilde B=B(\varpi_1)^{\otimes b},
\qquad
\widetilde C=B(\varpi_1)^{\otimes c},
\qquad
b=|\lambda_i|,
\qquad
c=|\lambda_{i+1}|.
\]
Let
\[
\iota_{\mathrm{src}}\colon
A\otimes B\otimes C\otimes D
\hookrightarrow
\widetilde A\otimes\widetilde B\otimes\widetilde C\otimes\widetilde D
\]
be the tensor product of the row-reading embeddings above, and let
\[
\iota_{\mathrm{tgt}}\colon
A\otimes C\otimes B\otimes D
\hookrightarrow
\widetilde A\otimes\widetilde C\otimes\widetilde B\otimes\widetilde D
\]
be the corresponding embedding after the two middle factors have been interchanged.

Set
\[
L:=\tau^{\widetilde A}_{\widetilde B,\widetilde C},
\qquad
R:=\bar\tau^{\,\widetilde D}_{\widetilde B,\widetilde C}.
\]
Thus $L$ and $R$ interchange the two blocks of $b$ and $c$ single-box factors corresponding to
$B$ and $C$, anchored respectively on the refined blocks $\widetilde A$ and $\widetilde D$.
Since the $\iota_j$ are inclusions of connected components, Lemma~\ref{lem:inv-natural} gives
\[
\iota_{\mathrm{tgt}}\circ\tau^A_{B,C}
=
L\circ\iota_{\mathrm{src}},
\qquad
\iota_{\mathrm{tgt}}\circ\bar\tau^{\,D}_{B,C}
=
R\circ\iota_{\mathrm{src}}.
\]
Thus the local rules on $B$ and $C$ are the restrictions of the corresponding block interchanges
after the factors have been refined into single boxes.

\emph{Common factorization.}
By Corollary~\ref{cor:inv-LR}, $L$ and $R$ factor into the same chronological sequence of
adjacent positions, given by \eqref{eq:inv-R}. At each step the factor for $L$ is $t_j$, while
the corresponding factor for $R$ is $t'_j$. Starting from an invariant, the two intermediate
tensors agree inductively: every local rule is a crystal isomorphism and hence preserves
invariants, and the hypothesis gives $t_j=t'_j$ on invariants at the next crossing. Therefore
$L=R$ on $\operatorname{Inv}(B(\varpi_1)^{\otimes N})$.

\emph{Pull back.}
If
\[
x\in
\operatorname{Inv}\bigl(B(\lambda_1)\otimes\cdots\otimes B(\lambda_n)\bigr),
\]
then $\iota_{\mathrm{src}}(x)$ is an invariant of $B(\varpi_1)^{\otimes N}$. By the naturality
identities above,
\[
\iota_{\mathrm{tgt}}\bigl(\tau^A_{B,C}x\bigr)
=
L\bigl(\iota_{\mathrm{src}}(x)\bigr),
\qquad
\iota_{\mathrm{tgt}}\bigl(\bar\tau^{\,D}_{B,C}x\bigr)
=
R\bigl(\iota_{\mathrm{src}}(x)\bigr).
\]
We have just proved that $L=R$ on invariants, so
$
L\bigl(\iota_{\mathrm{src}}(x)\bigr)
=
R\bigl(\iota_{\mathrm{src}}(x)\bigr).
$
Combining these equalities gives
$
\iota_{\mathrm{tgt}}\bigl(\tau^A_{B,C}x\bigr)
=
\iota_{\mathrm{tgt}}\bigl(\bar\tau^{\,D}_{B,C}x\bigr).
$
Since $\iota_{\mathrm{tgt}}$ is injective,
\[
\tau^A_{B,C}x=\bar\tau^{\,D}_{B,C}x.
\]
By Lemma~\ref{lem:inv-local}, this is exactly $t_ix=t'_ix$.
\end{proof}

\begin{proof}[Proof of Theorem~\ref{thm:inv-dynkin}]
A finite normal crystal is a disjoint union of irreducible crystals, and the cactus action
respects this decomposition. We may therefore assume that every $B_j$ is irreducible.
Proposition~\ref{prop:inv-atomic} gives the hypothesis of
Lemma~\ref{lem:inv-propagation}, so
$
t_j=t'_j
$
on invariants for every $j$. Lemma~\ref{lem:inv-local} now gives
\[
q_{[1,n]}t_iq_{[1,n]}
=
t'_{n-i}
=
t_{n-i},
\]
which is the desired Dynkin-reversal relation.
\end{proof}

\begin{proof}[Proof of Theorem~\ref{thm:inv-adapted}]
By Theorem~\ref{thm:inv-dynkin}, the cactus action on invariants satisfies the Dynkin-reversal
relations, and thus factors through $\RC_n$. By Theorem~\ref{thm:equivalence},
\[
Q_D=q_{[1,n]}
\]
in $\RC_n$ for every Dynkin path $D$. By Lemma~\ref{lem:reduced-adapted}, the words in
$\mathcal K(D)$ are exactly the reduced words for $w_0$ adapted to the orientation $\Omega_D$,
and by Lemma~\ref{lem:knitting-words} every such word represents $Q_D$. Thus every adapted
reduced word acts on invariants as $q_{[1,n]}$. Proposition~\ref{prop:inv-reversal} identifies
this action with
\[
b_1\otimes\cdots\otimes b_n
\longmapsto
\xi(b_n)\otimes\cdots\otimes\xi(b_1),
\]
as claimed.
\end{proof}

\subsection{A topological proof and extension to other types}
\label{subsec:inv-topological}

Theorem~\ref{thm:big-cactus} gives a second proof of
Theorem~\ref{thm:inv-dynkin}, valid in every semisimple type.

Let $\mathfrak g$ be a complex semisimple Lie algebra and
$\boldsymbol\lambda=(\lambda_1,\ldots,\lambda_n)$ a tuple of dominant
integral weights. Set
\[
 W_{\boldsymbol\lambda}
   =\bigl(V(\lambda_1)\otimes\cdots\otimes V(\lambda_n)\bigr)^{\mathfrak g}.
\]
For $z\in X_n=\overline M_{0,n+1}(\mathbb R)$, the rational Gaudin
algebra $\mathcal A_z$ acts on $W_{\boldsymbol\lambda}$ with simple
spectrum. Its joint eigenlines form an unramified covering
$\mathcal E^0_{\boldsymbol\lambda}\to X_n$.
The Gaudin monodromy theorem \cite[Theorems~1.4 and~8.5]{HKRW}
identifies its monodromy with the cactus action on
$\operatorname{Inv}(B(\lambda_1)\otimes\cdots\otimes B(\lambda_n))$.
The identification is compatible with permuting the marked points and
the labeled factors. For unequal factors we consider all orderings,
as in Section~\ref{sec:inv-action}.

\begin{lemma}\label{lem:inv-gaudin-descent}
Forgetting the root $f:X_n\to Y_n=\overline M_{0,n}(\mathbb R)$ gives
\[
 \mathcal E^0_{\boldsymbol\lambda}
   \cong f^*\mathcal F_{\boldsymbol\lambda}
\]
for an unramified covering $\mathcal F_{\boldsymbol\lambda}\to Y_n$.
This descent is compatible with simultaneous permutations of the
markings and the labeled factors.
\end{lemma}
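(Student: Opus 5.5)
I will construct $\mathcal F_{\boldsymbol\lambda}$ directly as the covering of joint eigenlines of a family of commutative algebras parametrized by $Y_n$, and then identify its pullback with $\mathcal E^0_{\boldsymbol\lambda}$.

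\emph{Smooth locus.} For $z=(z_1,\dots,z_n,\infty)\in M_{0,n+1}(\mathbb R)$, the Gaudin algebra $\mathcal A_z$ acting on the invariant space $W_{\boldsymbol\lambda}$ factors through the reduced Gaudin algebra. By the projective invariance \cite[Theorem~4.15(2)]{IKR}, the image of $\mathcal A_z$ in $\operatorname{End}(W_{\boldsymbol\lambda})$ depends only on the $\operatorname{PGL}_2$-orbit of the $n$ points $z_1,\dots,z_n$. Invariance under affine maps is classical (translation and scaling act by scalars on the quadratic Hamiltonians). The nontrivial input is invariance under inversion $z\mapsto 1/z$ on $\mathfrak g$-invariants, where the total spin vanishes and the Hamiltonian at $\infty$ acts by zero. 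Hence on $f^{-1}(M_{0,n}(\mathbb R))\cap M_{0,n+1}(\mathbb R)$ the subalgebra $\mathcal A_z|_{W}$ is constant along the fibres of $f$, and we define $\mathcal A_y$ for $y\in M_{0,n}(\mathbb R)$ accordingly.

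\emph{Extension to the boundary.} Next I will define $\mathcal A_y$ for every stable $y\in Y_n$ and show that $z\mapsto\mathcal A_z|_W$ is constant on each fibre $f^{-1}(y)$. The fibre is a copy of the universal curve over $y$, namely the curve of $y$ itself. Its points are boundary points of $X_n$ where $\infty$ sits on some component, or bubbles off at a node or marked point.

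For a boundary point, $\mathcal A_z$ is the tensor product of Gaudin algebras of the components, by the inductive description in \cite{HKRW}: an inner component acts on the tensor product of the subfactors, viewed as $\mathfrak g$-modules via the node. Restricted to invariants, only the invariant part of the component carrying $\infty$ matters, and there the smooth-case projective invariance applies. Moving $\infty$ within a component leaves the algebra unchanged. When $\infty$ is pushed onto a node, the algebra is the limit, which agrees with both adjacent sides. Thus $z\mapsto \mathcal A_z|_W$ factors continuously through $f$. Since simple spectrum holds on all of $X_n$ and $f$ is surjective, $\mathcal A_y$ has simple spectrum on $W$ for every $y$. Its eigenlines therefore form an unramified covering $\mathcal F_{\boldsymbol\lambda}\to Y_n$, and $f^*\mathcal F_{\boldsymbol\lambda}=\mathcal E^0_{\boldsymbol\lambda}$ tautologically.

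\emph{Equivariance.} Permuting markings together with factors sends $\mathcal A_z$ to $\mathcal A_{\sigma z}$ by construction. The descended family inherits this equivariance because $f$ is $\Sym_n$-equivariant.

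The main obstacle is the boundary step. Continuity of the descended family needs care where a component carrying $\infty$ and only two other special points is contracted. There one must check that the limit algebra on invariants of the three-pointed bubble is trivial, so that nothing is lost.
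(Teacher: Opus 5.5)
Your argument is correct. On the smooth locus it coincides with the paper's: both use projective invariance of the reduced Gaudin algebra on $W_{\boldsymbol\lambda}$ \cite{IKR}. The two routes differ at the boundary.

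\emph{Your route.} You work fibre by fibre, using that $f^{-1}(y)$ is the real locus of the curve $y$.
\begin{enumerate}
\item[(i)] When $\infty$ lies on a component $v$, the tree description of limit Gaudin algebras shows that only the root algebra of $v$ depends on where $\infty$ sits. That algebra acts on $W_{\boldsymbol\lambda}$ through the $\mathfrak g$-invariants of its own tensor product, so projective invariance makes it constant on $v$.
\item[(ii)] Continuity of $z\mapsto\mathcal A_z|_W$ at the points where $\infty$ bubbles off at a node glues adjacent components.
\item[(iii)] Connectedness of the fibre gives constancy on all of $f^{-1}(y)$, and you then descend through the quotient map $f$.
\end{enumerate}

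\emph{The paper's route.} It never examines boundary algebras. It sets $\mathcal F_{\boldsymbol\lambda}=\sigma_n^*\mathcal E^0_{\boldsymbol\lambda}$. It proves $\mathcal D(z)=\mathcal D(\sigma_n(f(z)))$ for smooth $z$ by sliding the root to the marking $n$. It then extends this to all of $X_n$, because both sides are continuous in $z$ and the smooth locus is dense.

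\emph{What each buys.} The paper's argument needs only the smooth-case invariance plus continuity. Yours needs the inductive structure of the boundary algebras \cite{HKRW}. In return, it gives an intrinsic description of the descended algebra at every point of $Y_n$, and it makes $\Sym_n$-equivariance automatic. The paper's section $\sigma_n$ is only $\Sym_{n-1}$-equivariant, so it must separately observe that the identification is independent of the chosen lift.

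\emph{Two corrections.} First, ``only the invariant part of the component carrying $\infty$ matters'' is imprecise. The other components' algebras also act on $W_{\boldsymbol\lambda}$; what is true is that they do not change as $\infty$ moves within the root component. Second, the ``main obstacle'' you flag is not one. At a contracted three-pointed component carrying $\infty$, the algebra on $W_{\boldsymbol\lambda}$ is, by the same continuity you already invoke, the limit of the constant value on an adjacent component. Nothing further needs to be checked there.
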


\begin{proof}
Put $\mathcal U=U(\mathfrak g)^{\otimes n}$. Since the diagonal
copy $\Delta(\mathfrak g)$ annihilates $W_{\boldsymbol\lambda}$,
the action of $\mathcal U^{\mathfrak g}$ on this space factors through
the quantum Hamiltonian reduction
\[
 \bigl(\mathcal U/\mathcal U\Delta(\mathfrak g)\bigr)^{\mathfrak g},
\]
where $\mathcal U\Delta(\mathfrak g)$ is a left ideal. By
\cite[Section~2.6 and Theorem~4.15(2)]{IKR}, the image of the full
homogeneous Gaudin algebra in this reduction is invariant under
simultaneous projective transformations of its marked points.
For semisimple $\mathfrak g$, apply this statement to each simple
summand. Hence, on a smooth curve, the image in
$\operatorname{End}(W_{\boldsymbol\lambda})$, and therefore its joint
eigenline decomposition, depends only on the $n$ original markings,
not on the additional root.

Here is the extension to the boundary. Using the section $\sigma_n$
from the proof of Theorem~\ref{thm:big-cactus}, define
$\mathcal F_{\boldsymbol\lambda}=\sigma_n^*\mathcal E^0_{\boldsymbol\lambda}$.
Let $\mathcal D(z)$ be the unordered set of joint eigenlines at $z$,
viewed in $\operatorname{Sym}^{\dim W_{\boldsymbol\lambda}}
\mathbb P(W_{\boldsymbol\lambda})$; the case $W_{\boldsymbol\lambda}=0$
is immediate. This set varies continuously and has distinct elements,
including at the boundary. For a smooth $z$, move the root towards the
marking $n$. Root-independence and continuity give
\[
 \mathcal D(z)=\mathcal D\bigl(\sigma_n(f(z))\bigr).
\]
Both sides are continuous, so density of the smooth real locus gives
the equality for every $z\in X_n$. Identifying the same eigenlines
proves the asserted pullback isomorphism. Its description is independent
of the chosen lift of a point of $Y_n$, so the permutation maps on the
original coverings descend as well.
\end{proof}

\begin{theorem}\label{thm:inv-all-types}
For any complex semisimple $\mathfrak g$ and finite normal
$\mathfrak g$-crystals $B_1,\ldots,B_n$, $n\geq3$, the cactus action
on their invariant tensors factors through
\[
 \Cn_n\xrightarrow{f_*}J'_n\cong\RC_n.
\]
In particular, Theorem~\ref{thm:inv-dynkin} holds in every semisimple
type.
\end{theorem}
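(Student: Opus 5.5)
The plan is to realize the cactus action on invariants as monodromy of the Gaudin eigenline covering, and then use Lemma~\ref{lem:inv-gaudin-descent} to factor that monodromy through $f_*$. First reduce to irreducible factors: a finite normal crystal is a disjoint union of copies of $B(\lambda)$, and by Lemma~\ref{lem:inv-natural} the cactus action is compatible with the inclusions of these components. So it suffices to treat $B_j=B(\lambda_j)$. In that case $\operatorname{Inv}(B(\lambda_1)\otimes\cdots\otimes B(\lambda_n))$ is in bijection with a basis of $W_{\boldsymbol\lambda}$, and by \cite[Theorems~1.4 and~8.5]{HKRW} the cactus action on it is the monodromy action of $\pi_1^{\Sym_n}(X_n,x)=\Cn_n$ on the fibre of $\mathcal E^0_{\boldsymbol\lambda}$ over the caterpillar point $x$. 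This action is taken on the groupoid of all orderings of the factors, and it is compatible with permutations.

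Next, apply Lemma~\ref{lem:inv-gaudin-descent}, which gives $\mathcal E^0_{\boldsymbol\lambda}\cong f^*\mathcal F_{\boldsymbol\lambda}$ equivariantly. Pulling back coverings intertwines monodromy. For $(g,\gamma)\in\pi_1^{\Sym_n}(X_n,x)$, transport along $\gamma$ in $f^*\mathcal F$ followed by the permutation identification equals transport along $f\circ\gamma$ in $\mathcal F$ followed by the same permutation identification. The fibres over $x$ and $y=f(x)$ are identified, and the equivariant structures correspond. Hence the monodromy representation of $\Cn_n$ equals the composite of $f_*\colon\Cn_n\to J'_n$ with the monodromy representation of $J'_n=\pi_1^{\Sym_n}(Y_n,y)$ on $\mathcal F_y$. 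This proves that the action factors through $f_*$.

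Finally, Theorem~\ref{thm:big-cactus} identifies $J'_n$ with $\RC_n$ through $f_*=\bar f\eta$. Since the action factors through $f_*$, it kills $\ker f_*$, which contains the Dynkin-reversal relators $qt_iq\,t_{n-i}$. This gives Theorem~\ref{thm:inv-dynkin} in all types, and via Theorem~\ref{thm:equivalence} and Proposition~\ref{prop:inv-reversal} also the adapted-word statement. In fact only Lemma~\ref{lem:big-projected-paths}, the easy inclusion, is needed for the relations.

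The main obstacle is making the equivariant bookkeeping precise when the factors are unequal. Here $\Sym_n$ permutes the markings and the labels $\lambda_j$ together, so $\mathcal E^0$ is really a covering over $X_n$ indexed by orderings, with isomorphisms $\mathcal E^0_{\boldsymbol\lambda}\to g^*\mathcal E^0_{g\boldsymbol\lambda}$. One has to check that the descent isomorphisms of Lemma~\ref{lem:inv-gaudin-descent} commute with these isomorphisms; that lemma asserts this compatibility. One must also check that the basepoint identification through the chamber $D$ used in \cite{HKRW} agrees with the caterpillar basepoint used in Section~\ref{sec:big-cactus}. I would handle the latter by noting that both identifications use the contractible closure $\overline D$, which maps into a contractible chamber closure of $Y_n$, so the choices agree up to canonical homotopy.
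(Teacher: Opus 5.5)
Your proposal is correct and follows the paper's proof. You reduce to irreducible factors by naturality of the commutor, identify the cactus action on invariants with Gaudin eigenline monodromy, and use the descent $\mathcal E^0_{\boldsymbol\lambda}\cong f^*\mathcal F_{\boldsymbol\lambda}$ of Lemma~\ref{lem:inv-gaudin-descent} to factor that monodromy through $f_*$. Your remark is also a correct sharpening: for the Dynkin-reversal relations alone you only need the relators to lie in $\ker f_*$, which is Lemma~\ref{lem:big-projected-paths}, rather than the full kernel description of Theorem~\ref{thm:big-cactus}.
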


\begin{proof}
For irreducible factors, apply the Gaudin monodromy theorem to
Lemma~\ref{lem:inv-gaudin-descent}. Monodromy of a pullback is obtained
by composing with the induced map on fundamental groups. Thus the
action factors through $f_*$, whose kernel is given by
Theorem~\ref{thm:big-cactus}. Naturality of the commutor for inclusions
of connected components reduces arbitrary finite normal crystals to
this case.
\end{proof}

\begin{corollary}\label{cor:inv-adapted-all-types}
Theorem~\ref{thm:inv-adapted} holds for finite normal crystals of any
complex semisimple Lie algebra: every adapted reduced word acts on
invariant tensors as
\[
 b_1\otimes\cdots\otimes b_n
 \longmapsto
 \xi_{B_n}(b_n)\otimes\cdots\otimes\xi_{B_1}(b_1).
\]
\end{corollary}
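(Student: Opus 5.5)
The argument repeats the type-$A$ proof of Theorem~\ref{thm:inv-adapted}, with Theorem~\ref{thm:inv-all-types} supplying the Dynkin-reversal relations in place of the combinatorial input. I would first reduce to irreducible factors. A finite normal crystal is a disjoint union of irreducible ones. The cactus action commutes with the inclusions of connected components (Lemma~\ref{lem:inv-natural}), and the Sch\"utzenberger involution is defined componentwise. Together these show that both sides of the claimed identity are compatible with decomposing each $B_j$ into irreducibles, so it suffices to treat $B_j=B(\lambda_j)$.

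Next, for $n\ge3$, Theorem~\ref{thm:inv-all-types} says that the action of $\Cn_n$ on $\operatorname{Inv}(B_1\otimes\cdots\otimes B_n)$, regarded as an action on the groupoid of all orderings of the factors, factors through $\RC_n$. In particular, the Dynkin-reversal relations $q t_i q=t_{n-i}$ hold on invariants. By Theorem~\ref{thm:equivalence}, which is a purely group-theoretic statement inside $\Cn_n$, these relations imply $Q_D=q_{[1,n]}$ in $\RC_n$ for every Dynkin path $D$. By Lemmas~\ref{lem:reduced-adapted} and~\ref{lem:knitting-words}, every reduced word $w$ for $w_0$ adapted to some orientation $\Omega$ lies in $\mathcal K(D)$ for a path $D$ with $\Omega_D=\Omega$, and therefore $t_w=Q_D$ already in $\Gam_n$. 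It follows that $t_w$ acts on invariants exactly as $q_{[1,n]}$ does. For $n\le2$ the only reduced word is empty or $(1)$, and in either case it equals $q_{[1,n]}$ directly.

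The last step identifies the action of $q_{[1,n]}$ with $b_1\otimes\cdots\otimes b_n\mapsto\xi(b_n)\otimes\cdots\otimes\xi(b_1)$. This is Proposition~\ref{prop:inv-reversal}, and I expect it to be the main obstacle, because that proof was written for $\mathfrak{sl}_m$. I would redo it with $\xi$ characterized in general type by $\xi(e_ib)=f_{i^*}\xi(b)$, where $i\mapsto i^*$ is the diagram automorphism induced by $-w_0$, and by $\xi$ sending highest weight to lowest weight. With this change, $\Theta$ again intertwines $e_i$ with $f_{i^*}$. The reason is the tensor product rule: reversing the order of the factors swaps the roles of $\varepsilon$ and $\varphi$, and applying $\xi$ factorwise maps $\varepsilon_i$ to $\varphi_{i^*}$. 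The boundary cases of the strict and non-strict inequalities in Definition~\ref{conv:tensor} need to be checked at this point. Once the intertwining holds, the recursion~\eqref{eq:inv-rho} gives $q_{[1,n]}=\xi\circ\Theta$. Since $\xi$ fixes invariants, $q_{[1,n]}$ restricts to $\Theta$ on $\operatorname{Inv}$, and this proves the corollary.
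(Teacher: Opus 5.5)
Your proposal is correct and follows essentially the paper's route: you take the Dynkin-reversal relations on invariants from Theorem~\ref{thm:inv-all-types}, pass to adapted words via Theorem~\ref{thm:equivalence}, and identify $q_{[1,n]}$ on invariants by Proposition~\ref{prop:inv-reversal}, which the paper notes holds in every semisimple type. Your replacement of $f_{m-i}$ by $f_{i^*}$ (with $i\mapsto i^*$ induced by $-w_0$), together with the check that $\Theta$ intertwines $e_i$ with $f_{i^*}$ at the boundary cases of the tensor rule, spells out what the paper's one-line remark leaves implicit, and that check does go through.
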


\begin{proof}
Use Theorems~\ref{thm:inv-all-types} and~\ref{thm:equivalence}.
The proof of Proposition~\ref{prop:inv-reversal} uses only the
commutor formula and the fact that the Sch\"utzenberger involution
fixes singleton components, so it applies in every semisimple type.
\end{proof}

The diagram $A_{n-1}$ in this corollary indexes tensor positions, not
the simple roots of $\mathfrak g$. The invariant condition is essential:
on a general multiplicity space
$\operatorname{Hom}_{\mathfrak g}(V(\mu),\bigotimes_iV(\lambda_i))$,
the root carries $V(\mu)^*$ rather than the trivial representation,
so the preceding root-forgetting argument does not apply. The assertions for
$n\leq2$ follow directly from the commutor formula.
\subsection{Rectangular tableaux as invariants}\label{sec:inv-rows}

Finally, we explain how the rectangular tableaux of Sections~\ref{sec:first}--\ref{sec:promotion} arise from Theorem~\ref{thm:inv-adapted} when the factors are single rows.

Let $R_a=B(a\varpi_1)$ be the crystal of a single row of length $a$. Under the row-reading
embedding, an element of $R_a$ is simply a weakly increasing word of length $a$. Thus
\[
r_1\otimes\cdots\otimes r_n
\in
R_{c_1}\otimes\cdots\otimes R_{c_n}
\]
refines to the word
\[
u=r_1\cdots r_n
\in B(\varpi_1)^{\otimes N},
\qquad
N=c_1+\cdots+c_n.
\]
Set
\[
C_j=c_1+\cdots+c_j,\qquad C_0=0.
\]

Under RSK, the connected component of $u$ is determined by its recording tableau $Q(u)$. Since
$r_j$ is weakly increasing, the entries
\[
C_{j-1}+1,\dots,C_j
\]
form a horizontal strip in $Q(u)$, with the entries increasing from left to right. Conversely,
this condition is exactly what is needed for the corresponding block of the word to be weakly
increasing. Replacing all entries in this interval by $j$, for each $j$, therefore gives a
semistandard tableau $T(u)$ of the same shape and content $(c_1,\dots,c_n)$, and standardisation
recovers $Q(u)$.

The word $u$ is invariant exactly when its RSK component is a one-element
$\mathfrak{sl}_m$-crystal, or equivalently when $Q(u)$ has rectangular shape $(k^m)$. Therefore
\begin{equation}\label{eq:inv-rect}
  \operatorname{Inv}\big(R_{c_1}\otimes\cdots\otimes R_{c_n}\big)\;\longleftrightarrow\;
  \SSYT\big((k^m),n\big)_{\text{content }(c_1,\dots,c_n)}.
\end{equation}
Thus the $j$-th row factor corresponds to the letter $j$ in the tableau.

The same description identifies the cactus action. First note that the full reversal of a single
row factor acts trivially on its row-reading image:
\[
\rho(r)=\xi(\mathrm{rc}(r))=r.
\]
Consequently, after refining the first $k$ row factors into boxes, the coarse reversal
$q_{[1,k]}$ is the restriction of the fine reversal $q_{[1,C_k]}$ on the first $C_k$ boxes:
by \eqref{eq:inv-sigma-positions}, the only additional maps are the internal reversals of the row
blocks, and these act trivially.

By Lemma~\ref{lem:inv-dictionary}, the fine reversal $q_{[1,C_k]}$ evacuates the subtableau of
$Q(u)$ formed by the entries $1,\dots,C_k$. Evacuation commutes with standardisation, by the
compatibility of RSK with standardisation and reverse-complement. Hence after destandardisation
this becomes evacuation of the letters $1,\dots,k$ in $T(u)$. In the notation of the first part
of the paper, this is precisely $q_{k-1}$. Thus, under \eqref{eq:inv-rect},
\[
q_{[1,k]}\longmapsto q_{k-1}.
\]

In particular, $q_{[1,n]}$ acts as $q_{n-1}=q$, which on a rectangular tableau is $\theta$ by
Lemma~\ref{lem:theta-is-evac}. Thus Proposition~\ref{prop:inv-reversal} recovers that lemma.
Similarly,
\[
t_j
=
q_{[1,j]}q_{[1,j+1]}q_{[1,j]}q_{[1,j-1]}
\]
acts as
\[
q_{j-1}q_jq_{j-1}q_{j-2}=t_j,
\]
the Bender--Knuth involution on $T(u)$. It follows that $\BKR_n$ is the image of $\Cn_n$ on the
invariants of tensor products of rows, and Theorem~\ref{thm:knit-are-evac} is precisely
Theorem~\ref{thm:inv-adapted} for row factors.

\section{Related work}\label{sec:related}

\subsection{Bender--Knuth groups}

The group $\BK_n$ was introduced by Berenstein and Kirillov~\cite{BK}, who found, among other
relations, $(t_1t_2)^6=1$. Its connection with the cactus group was established independently
in~\cite{CGP}, \cite{Halacheva} and~\cite{BK16}.

Several related Bender--Knuth and partial-evacuation actions have since been studied. Rodrigues
\cite{Rodrigues} defines a shifted Berenstein--Kirillov group and shows that it is a quotient of
the cactus group. Azenhas, Tarighat Feller and Torres~\cite{ATF} define a symplectic
Berenstein--Kirillov group acting on Kashiwara--Nakashima tableaux and exhibit two relations
which do not follow from the defining relations of the symplectic cactus group. Liao and
the third author~\cite{LR} study the image of the cactus group on standard Young tableaux of a fixed
shape. Bender--Knuth moves on linear extensions of posets are studied in~\cite{REU}, where the
cactus relations are shown to hold for all minuscule posets.

The rectangular group $\BKR_n$ fits naturally into this picture. Conjecture~\ref{conj:relative}
says that the additional relations obtained by restricting from arbitrary tableaux to rectangles
are exactly the Dynkin-reversal relations.

\subsection{The big cactus group}
The big cactus group $J'n=\pi_1^{\Sym_n}(\overline M{0,n}(\mathbb R))$ was introduced by Henriques~\cite{HenriquesBig}. The space $\overline M_{0,n}(\mathbb R)$ is tiled by associahedra, with one tile for each dihedral order of the markings, and is aspherical~\cite{DJS}. Davis, Januszkiewicz and Scott studied the symmetry group $\operatorname{Aut}(X)$ of the corresponding associahedral tiling of the universal cover, which in our notation identifies with $J'_n$, and gave a presentation with generators indexed by subpolygons of an $n$-gon and by its diagonals~\cite[Thm.~8.6.2 and Rem.~8.6.5]{DJS}. Theorem~\ref{thm:big-cactus} instead presents $J'_n$ as the quotient of the ordinary cactus group by the $n-1$ Dynkin-reversal relations, and Corollary~\ref{cor:big-two-presentations} gives a second presentation by adapted words. The proof uses the mirabolic cactus group and the caterpillar realization of the elements $t_i$ from forthcoming work of Gurenkova, Halacheva and the third author~\cite{GHR}.

\subsection{Quivers and toggles}

In the language of Garver, Patrias and Thomas~\cite{GPT}, a flip of a Dynkin path at a source
corresponds to reflection at a source of the associated quiver. The knitting region
$R(D,\bar D)$ can be identified with the Auslander--Reiten quiver of
$\mathrm{rep}(\Omega_D)$; see Remark~\ref{rem:sinks-sources}. Garver, Patrias and Thomas show
that source reflections are realized by piecewise-linear toggles on minuscule reverse plane
partitions. Our knitting argument similarly follows source reflections along a full adapted reduced word for $w_0$; on rectangles the resulting action is evacuation.

Dranowski, Elek, Kamnitzer and Morton-Ferguson~\cite{DEKM} develop a uniform toggle model for
minuscule heaps in simply-laced type. Brown, Elek and Halacheva~\cite{BEH} relate this toggle
action to the cactus action for the crystals $B(n\varpi_1)$ in type $D$. This suggests asking for
an analogue of Theorem~\ref{thm:equivalence} in other types.

\subsection{Sorting networks}

Warrington~\cite{Warrington} introduced the four reentrant subnetworks $X$ as a combinatorial
analogue of Sylvester's four-point problem, using the Goodman--Pollack correspondence with planar
point configurations~\cite{GoodmanPollack}. He conjectured that a random $4$-subnetwork of a
random sorting network is reentrant with probability $1/4$; this was proved by Angel and
Holroyd~\cite{AH}. Meng~\cite{Meng} studied $X$-avoiding reduced words and asked for their
number.

Theorem~\ref{thm:X-avoiding} gives one direction of a surprisingly close connection:
every adapted reduced word is $X$-avoiding. Conjecture~\ref{conj:X} says that the converse
also holds. We verified the conjecture directly for $n\le6$, and for $n=7$ by comparison of counts. 

If the conjecture holds in general, Meng's question becomes the enumeration of adapted reduced
words, equivalently of linear extensions of the Auslander--Reiten quivers associated to the
orientations of $A_{n-1}$. We have no conceptual explanation for why adaptedness should be
governed by a convexity condition on four points.

\subsection{Promotion}

Rectangular promotion periodicity and some of its proofs are discussed in
Remark~\ref{rem:other-proofs}. On tableaux, the identity of
Theorem~\ref{thm:promotion-identity} is the Bender--Knuth form of the classical relation between
promotion, evacuation and dual evacuation; see Stanley's survey~\cite{Stanley09}. 
\subsection{Crystals and invariant tensors}

The cactus action on tensor products of crystals was constructed by
Henriques and Kamnitzer~\cite{HK}. In joint work with Halacheva,
Kamnitzer and Weekes, the third author~\cite{HKRW} proved Etingof's
conjecture identifying this action with the monodromy of Bethe
eigenvectors for Gaudin algebras on tensor-product multiplicity spaces.
The invariants considered in Section~\ref{sec:invariants} correspond
to the multiplicity space of the trivial representation.

Halacheva~\cite{Halacheva} used skew Howe duality to identify the internal cactus action
on tableaux with the external action on tensor products of column crystals. For row
crystals, Section~\ref{sec:inv-rows} gives the analogous identification directly by RSK
and standardisation. Neither identification is used in the proof of
Theorem~\ref{thm:inv-dynkin}.

Westbury~\cite{Westbury18} introduced local rules in coboundary categories; the factorisation in
Lemma~\ref{lem:inv-factor} is essentially his factorisation of these local rules. Pfannerer,
Rubey and Westbury~\cite{PRW} use local rules and growth diagrams to study promotion and
evacuation on highest weight words, relating them to rotation and reversal of chord diagrams
and permutations.

There are also several general periodicity results for promotion on invariants. For tensor powers
of a single crystal, in arbitrary semisimple type, Westbury~\cite{Westbury16} obtains promotion
periodicity as part of a cyclic sieving phenomenon. Fontaine and Kamnitzer~\cite{FK} prove a
corresponding result for rotation on invariant vectors in tensor products of minuscule
representations.

Theorems~\ref{thm:promotion-identity} and~\ref{thm:inv-all-types} also give promotion periodicity for arbitrary finite normal crystals $B_1,\dots,B_n$: the $n$-fold composite of the promotion maps through the cyclic orderings of the factors is the identity on $\operatorname{Inv}(B_1\otimes\cdots\otimes B_n)$. 

Finally, Borodin~\cite{Borodin} studies the cactus action on $\mathfrak{sl}_2$ multiplicity sets
through arc diagrams and finds additional relations, including braid relations.

\subsection{Affine cactus groups and Kirillov--Reshetikhin crystals}
\label{subsec:related-affine-cactus}

For $1\leq r<n$, the tableaux $\SSYT((s^r),n)$ form the underlying
finite-type crystal of the Kirillov--Reshetikhin crystal $B^{r,s}$
of type $A_{n-1}^{(1)}$. Under the reality and separation hypotheses
of their construction, Krylov, Mashanova-Golikova and the third author
\cite[Theorems~17.4 and~18.2]{KMR} identify Yangian Bethe eigenlines
with tensor products of Kirillov--Reshetikhin crystals and identify
the interval monodromies with partial Sch\"utzenberger involutions
for finite-type subdiagrams of the affine Dynkin diagram.
This extends the internal cactus action to the affine setting.
A complementary monodromy description for the trigonometric Gaudin
model, in terms of crystal commutors and cyclic permutation of tensor
factors, is given in \cite[Theorem~1.3(3)]{KRFlowers}; this is an
external action on tensor positions, rather than an internal action
on Dynkin colours.

The affine parameter space is the compact real form
$Z_n=\overline M_{0,n+2}^{\mathrm{comp}}(\mathbb R)$, on which
$1,\ldots,n$ are real markings and $0,\infty$ are exchanged by the
real structure. Its $\Sym_n$-equivariant fundamental group is the
extended affine cactus group $\widetilde{AC}_n$
\cite[Theorem~11.12]{IKLPR}. There is a natural comparison with the
spaces used here. Forgetting $0,\infty$ defines $F:Z_n\to Y_n$,
while attaching a rigid three-pointed component carrying this
conjugate pair at the root defines $j:X_n\to Z_n$. Since the attached
component is contracted after the pair is forgotten, $F\circ j=f$.
Thus, with compatible basepoints, we obtain
\[
 \Cn_n\xrightarrow{j_*}\widetilde{AC}_n
 \xrightarrow{F_*}J'_n,
 \qquad F_*j_*=f_*.
\]
This compares the groups and parameter spaces, but does not imply that the full Kirillov--Reshetikhin monodromy descends along $F$.

\medskip

{\footnotesize
\noindent\textbf{Gabriel Frieden}\par
\noindent Desautels Faculty of Management, McGill University, Montreal QC, Canada\par
\noindent\texttt{gabriel.frieden@mcgill.ca}\par
}

\medskip

{\footnotesize
\noindent\textbf{Pavlo Pylyavskyy}\par
\noindent School of Mathematics, University of Minnesota, Minneapolis, Minnesota, United States\par
\noindent\texttt{ppylyavs@umn.edu}\par
\noindent P.P. was supported by a Simons Foundation International grant administered by the Simons Foundation, SFI-MPS-TSM-00013120.\par
}

\medskip
{\footnotesize
\noindent\textbf{Leonid Rybnikov}\par
\noindent Department of Mathematics and Statistics, University of Montreal, Montreal QC, Canada\par
\noindent\texttt{leonid.rybnikov@umontreal.ca}\par
\noindent L.R. was supported by Fondation Courtois and by the NSERC Discovery program.\par
}

\end{document}